\newtheorem{thm}{Theorem} [section]
\newtheorem{lemma}[thm]{Lemma}
\newtheorem{lem}[thm]{Lemma}
\newtheorem{corollary}[thm]{Corollary}
\newtheorem{prop}[thm]{Proposition}
\newtheorem*{basic assumption}{Basic Assumption}
\theoremstyle{definition}
\newtheorem*{principal example}{Main Example}
\newtheorem{defn}[thm]{Definition}
\newtheorem{example}[thm]{Example}
\newtheorem{question}[thm]{Question}
\newtheorem{assumption}[thm]{Assumption}
\newtheorem{exmp}[thm]{Example}
\theoremstyle{remark}
\newtheorem{remark}[thm]{Remark}
\newtheorem{claim}[thm]{Claim}
\DeclareMathOperator{\Ind}{Ind} 
\DeclareMathOperator{\Id}{Id} 
\DeclareMathOperator{\Res}{Res}
\begin{document}

\numberwithin{equation}{section}

\newcommand{\hs}{\mbox{\hspace{.4em}}}
\newcommand{\bd}{\begin{displaymath}}
\newcommand{\ed}{\end{displaymath}}
\newcommand{\bcd}{\begin{CD}}
\newcommand{\ecd}{\end{CD}}

\newcommand{\proj}{\operatorname{Proj}}
\newcommand{\bproj}{\underline{\operatorname{Proj}}}
\newcommand{\spec}{\operatorname{Spec}}
\newcommand{\bspec}{\underline{\operatorname{Spec}}}
\newcommand{\pline}{{\mathbf P} ^1}
\newcommand{\pplane}{{\mathbf P}^2}
\newcommand{\coker}{{\operatorname{coker}}}
\newcommand{\ldb}{[[}
\newcommand{\rdb}{]]}

\newcommand{\Sym}{\operatorname{Sym}^{\bullet}}
\newcommand{\Symp}{\operatorname{Sym}}
\newcommand{\Pic}{\operatorname{Pic}}
\newcommand{\AAut}{\operatorname{Aut}}
\newcommand{\PAut}{\operatorname{PAut}}

\newcommand{\too}{\twoheadrightarrow}
\newcommand{\C}{{\mathbb C}}
\newcommand{\cA}{{\mathcal A}}
\newcommand{\cS}{{\mathcal S}}
\newcommand{\cV}{{\mathcal V}}
\newcommand{\cM}{{\mathcal M}}
\newcommand{\bA}{{\mathbf A}}
\newcommand{\cB}{{\mathcal B}}
\newcommand{\cC}{{\mathcal C}}
\newcommand{\cD}{{\mathcal D}}
\newcommand{\D}{{\mathcal D}}
\newcommand{\boldc}{{\mathbf C}}
\newcommand{\cE}{{\mathcal E}}
\newcommand{\cF}{{\mathcal F}}
\newcommand{\cG}{{\mathcal G}}
\newcommand{\G}{{\mathbf G}}

\newcommand{\bH}{{\mathbf H}}
\newcommand{\cH}{{\mathcal H}}
\newcommand{\cI}{{\mathcal I}}
\newcommand{\cJ}{{\mathcal J}}
\newcommand{\cK}{{\mathcal K}}
\newcommand{\cL}{{\mathcal L}}
\newcommand{\baL}{{\overline{\mathcal L}}}
\newcommand{\M}{{\mathcal M}}
\newcommand{\bM}{{\mathbf M}}
\newcommand{\bm}{{\mathbf m}}
\newcommand{\cN}{{\mathcal N}}
\newcommand{\theo}{\mathcal{O}}
\newcommand{\cP}{{\mathcal P}}
\newcommand{\cR}{{\mathcal R}}
\newcommand{\boldp}{{\mathbf P}}
\newcommand{\boldq}{{\mathbf Q}}
\newcommand{\bbL}{{\mathbf L}}
\newcommand{\cQ}{{\mathcal Q}}
\newcommand{\cO}{{\mathcal O}}
\newcommand{\Oo}{{\mathcal O}}
\newcommand{\OX}{{\Oo_X}}
\newcommand{\OY}{{\Oo_Y}}
\newcommand{\dd}{\mathcal{D}}
\newcommand{\Hamp}{\mathbb{H}^{\perp}}
\newcommand{\Ham}{\mathbb{H}}
\newcommand{\cX}{{\mathcal X}}
\newcommand{\cW}{{\mathcal W}}
\newcommand{\boldz}{{\mathbf Z}}
\newcommand{\cZ}{{\mathcal Z}}
\newcommand{\qgr}{\operatorname{qgr}}
\newcommand{\gr}{\operatorname{gr}}
\newcommand{\coh}{\operatorname{coh}}
\newcommand{\End}{\operatorname{End}}
\newcommand{\Hom}{\operatorname{Hom}}
\newcommand{\IndCoh}{\operatorname{IndCoh}}
\newcommand{\sHom}{\mathcal{H}om}
\newcommand{\sEnd}{\mathcal{E}nd}
\newcommand{\uHom}{\underline{\operatorname{Hom}}}
\newcommand{\uHomY}{\uHom_{\OY}}
\newcommand{\uHomX}{\uHom_{\OX}}
\newcommand{\Ext}{\operatorname{Ext}}
\newcommand{\bExt}{\operatorname{\bf{Ext}}}
\newcommand{\Tor}{\operatorname{Tor}}

\newcommand{\inv}{^{-1}}
\newcommand{\airtilde}{\widetilde{\hspace{.5em}}}
\newcommand{\airhat}{\widehat{\hspace{.5em}}}
\newcommand{\nt}{^{\circ}}
\newcommand{\del}{\partial}

\newcommand{\supp}{\operatorname{supp}}
\newcommand{\GK}{\operatorname{GK-dim}}
\newcommand{\W}{W}
\newcommand{\id}{\operatorname{id}}
\newcommand{\res}{\operatorname{res}}
\newcommand{\lrar}{\leadsto}
\newcommand{\im}{\operatorname{Im}}
\newcommand{\HH}{\operatorname{H}}
\newcommand{\Coh}[1]{#1\text{-}{\mathsf{coh}}}
\newcommand{\QCoh}[1]{#1\text{-}{\mathsf{qcoh}}}
\newcommand{\PCoh}[1]{#1\text{-}{\mathsf{procoh}}}
\newcommand{\Good}[1]{#1\text{-}{\mathsf{good}}}
\newcommand{\QGood}[1]{#1\text{-}{\mathsf{Qgood}}}
\newcommand{\Hol}[1]{#1\text{-}{\mathsf{hol}}}
\newcommand{\Reghol}[1]{#1\text{-}{\mathsf{reghol}}}
\newcommand{\Bun}{\operatorname{Bun}}
\newcommand{\Hilb}{\operatorname{Hilb}}
\newcommand{\pa}{\partial}
\newcommand{\F}{\mathcal{F}}
\newcommand{\nthord}{^{(n)}}
\newcommand{\Aut}{\underline{\operatorname{Aut}}}
\newcommand{\Gr}{\operatorname{\bf Gr}}
\newcommand{\Fr}{\operatorname{Fr}}
\newcommand{\GL}{\operatorname{GL}}
\newcommand{\gl}{\mathfrak{gl}}
\newcommand{\SL}{\operatorname{SL}}
\newcommand{\ff}{\footnote}
\newcommand{\ot}{\otimes}
\newcommand{\Wx}{\mathcal W_{\mathfrak X}}
\newcommand{\gh}{\text{gr}_{\hbar}}
\newcommand{\ig}{\iota_g}
\def\Ext{\operatorname {Ext}}
\def\Hom{\operatorname {Hom}}

\def\bbZ{{\mathbb Z}}
\newcommand{\iso}{{\;\stackrel{_\sim}{\to}\;}}

\newcommand{\nc}{\newcommand}
\newcommand{\on}{\operatorname}
\nc{\cont}{\on{cont}}
\nc{\rmod}{\on{mod}}
\nc{\Mtil}{\widetilde{M}}
\nc{\wb}{\overline}
\nc{\wt}{\widetilde}
\nc{\wh}{\widehat}
\nc{\sm}{\setminus}
\nc{\mc}{\mathcal}
\nc{\mbb}{\mathbb}
\nc{\Mbar}{\wb{M}}
\nc{\Nbar}{\wb{N}}
\nc{\Mhat}{\wh{M}}
\nc{\pihat}{\wh{\pi}}
\nc{\opp}{\mathrm{opp}}
\nc{\phitil}{\wt{\phi}}
\nc{\Qbar}{\wb{Q}}
\nc{\DYX}{\D_{Y\leftarrow X}}
\nc{\DXY}{\D_{X\to Y}}
\nc{\dR}{\stackrel{\bbL}{\underset{\D_X}{\ot}}}
\nc{\Winfi}{\cW_{1+\infty}}
\nc{\K}{{\mc K}}
\nc{\unit}{{\bf \on{unit}}}
\nc{\boxt}{\boxtimes}
\nc{\xarr}{\stackrel{\rightarrow}{x}}
\nc{\Cnatbar}{\overline{C}^{\natural}}
\nc{\oJac}{\overline{\on{Jac}}}
\nc{\gm}{{\mathbf G}_m}
\nc{\Loc}{\on{Loc}}
\nc{\Bm}{\operatorname{Bimod}}
\nc{\lie}{{\mathfrak g}}
\nc{\lb}{{\mathfrak b}}
\nc{\lien}{{\mathfrak n}}
\nc{\E}{\mathcal{E}}
\nc{\Cs}{\mathbb{G}_m}
\nc{\hol}{\mathrm{Hol}}
\nc{\can}{\mathrm{can}}
\newcommand{\idot}{{\:\raisebox{2pt}{\text{\circle*{1.5}}}}}

\nc{\Gm}{{\mathbb G}_m}
\nc{\Gabar}{\wb{\G}_a}
\nc{\Gmbar}{\wb{\G}_m}
\nc{\PD}{{\mathbb P}_{\D}}
\nc{\Pbul}{P_{\bullet}}
\nc{\PDl}{{\mathbb P}_{\D(\lambda)}}
\nc{\PLoc}{\mathsf{MLoc}}
\nc{\Tors}{\on{Tors}}
\nc{\PS}{{\mathsf{PS}}}
\nc{\PB}{{\mathsf{MB}}}
\nc{\Pb}{{\underline{\operatorname{MBun}}}}
\nc{\Ht}{\mathsf{H}}
\nc{\bbH}{\mathbb H}
\nc{\gen}{^\circ}
\nc{\Jac}{\operatorname{Jac}}
\nc{\sP}{\mathsf{P}}
\nc{\otc}{^{\otimes c}}
\nc{\Det}{\mathsf{det}}
\nc{\PL}{\on{ML}}

\nc{\ml}{{\mathcal S}}
\nc{\Xc}{X_{\on{con}}}
\nc{\sgood}{\text{strongly good}}
\nc{\Xs}{X_{\on{strcon}}}
\nc{\resol}{\mathfrak{X}}
\nc{\map}{\mathsf{f}}
\nc{\tor}{\mathrm{tor}}
\nc{\base}{Z}
\nc{\bigvar}{\mathsf{W}}
\nc{\alg}{\mathsf{A}}
\nc{\T}{\mathsf{T}}
\nc{\qcoh}{\on{qcoh}}
\renewcommand{\o}{\otimes}
\nc{\mf}{\mathfrak}
\nc{\NN}{\mathsf{N}}
\nc{\h}{\hbar}
\nc{\ms}{\mathscr}
\nc{\good}{\mathrm{good}}
\newcommand{\LMod}[1]{#1\text{-}{\mathsf{Mod}}}
\newcommand{\Lmod}[1]{#1\text{-}{\mathsf{mod}}}
\nc{\mbf}{\mathbf}
\nc{\ad}{\mathrm{ad}}
\nc{\Rees}{\mathsf{Rees}}
\nc{\Supp}{\mathrm{Supp}}
\nc{\Z}{\mathbb{Z}}
\nc{\N}{\mathbb{N}}
\nc{\ann}{\mathrm{ann}}
\nc{\Blf}{B_{\mathrm{l.f.}}}
\nc{\bx}{\mathbf{x}}
\nc{\by}{\mathbf{y}}
\nc{\bz}{\mathbf{z}}
\nc{\bw}{\mathbf{w}}
\nc{\Der}{\mathrm{Der}}
\nc{\CatCs}{\ms{C}}
\renewcommand{\mod}{ \ \mathrm{mod} \ }
\nc{\sA}{\mc{A}} %This is the sheaf of DQ-modules
\nc{\A}{A} %This is the algebra of DQ-modules
\nc{\B}{B} %This is the quatization of C[S]
\nc{\sW}{\mathscr{W}} %This is the sheaf of W-algebras
\nc{\rh}{\mathrm{r.h.}}
\nc{\cs}{\mathbb{C}^*}
\nc{\R}{\mathbb{R}}
\nc{\Lie}{\mathrm{Lie}}
\nc{\Lag}{\Lambda^{c}}% Here's the macro for the Lagrangian
\nc{\ds}{\displaystyle}
\nc{\Qcoh}{\mathsf{Qcoh}}
\nc{\WQcoh}{\mathsf{Qcoh}(\cW)}
\nc{\Indf}{\mathsf{Ind}}
\nc{\DR}{\mathsf{DR}}
\nc{\co}{\operatorname{co}}
\nc{\op}{\operatorname{\opp}}
\nc{\Eq}{\mathrm{Eq}}
\renewcommand{\L}{\mathbb{L}}
\newcommand{\gwyn}[1]{\textcolor{brown}{#1}}

\title[Skeleta and Koszul Duality for Bionic Varieties]{Lagrangian Skeleta and Koszul Duality on Bionic Symplectic Varieties}
\author{Gwyn Bellamy}
\address{School of Mathematics and Statistics\\University of Glasgow\\Glasgow,  Scotland\\G12 8QW.}
\email{gwyn.bellamy@glasgow.ac.uk}
\author{Christopher Dodd}
\address{Department of Mathematics\\University of Illinois at Urbana-Champaign\\Urbana, IL 61801 USA}
\email{csdodd2@illinois.edu}
\author{Kevin McGerty}
\address{Mathematical Institute\\University of Oxford\\Oxford, England, UK}
\email{mcgerty@maths.ox.ac.uk}
\author{Thomas Nevins}
\address{Department of Mathematics\\University of Illinois at Urbana-Champaign\\Urbana, IL 61801 USA}
\email{nevins@illinois.edu}
%\date{\today}

\begin{abstract}
We consider the category of modules over sheaves of Deformation-Quantization (DQ) algebras on bionic symplectic varieties. These spaces are equipped with both an elliptic $\Gm$-action and a Hamiltonian $\Gm$-action, with finitely many fixed points. On these spaces one can consider geometric category $\mathcal{O}$: the category of (holonomic) modules supported on the Lagrangian attracting set of the Hamiltonian action. We show that there exists a local generator in geometric category $\mathcal{O}$ whose dg endomorphism ring, cohomologically supported on the Lagrangian attracting set, is derived equivalent to the category of all DQ-modules. This is a version of Koszul duality generalizing the equivalence between $\dd$-modules on a smooth variety and dg-modules over the de Rham complex. 
\end{abstract}

%\tableofcontents

\maketitle

%\setcounter{section}{-1}

%\section{Editorial Notes}

\section{Introduction}
The Riemann-Hilbert correspondence provides a fundamental link between the algebra of systems of differential equations and the topology of complex
algebraic varieties.  Its extension to an equivalence between categories of algebraic $\D$-modules on a smooth quasiprojective variety $X$ and dg modules over the de Rham complex $\Omega_X$ in \cite{Kap} represents a deep example of Koszul duality for noncommutative quadratic algebras. 
The present paper extends the duality between $\D$-modules and $\Omega$-modules to a more flexible symplectic context, and deduces a locality result---``sheafification over the Lagrangian skeleton''---that expresses a fundamental Morse-theoretic property of the category of modules over a deformation quantization.  

Let $\resol$ be a smooth, complex algebraic variety with algebraic symplectic form $\omega$.  Such varieties that appear in geometric representation theory typically come equipped with an action of the multiplicative group $\Gm$ of invertible complex numbers that rescales the symplectic form with positive weight: 
$t^*\omega = t^{\ell}\omega$ for some $\ell >0$.  We call such an action {\em elliptic} when the limit $\displaystyle \lim_{t\rightarrow\infty} t\cdot x$ exists for all $x\in \resol$.  Typical examples of symplectic varieties with elliptic $\Gm$-action include cotangent bundles of flag varieties and more general projective spherical varieties, and Nakajima quiver varieties.  

Many symplectic varieties that arise in nature can be equipped not only with an elliptic $\Gm$-action but also a compatible {\em Hamiltonian} $\Gm$-action, i.e. one preserving the symplectic form.  When the two actions have the same, finite, set of fixed points, we call the variety $\resol$ {\em bionic symplectic}: a precise definition appears in Section \ref{properties}.  Standard examples are, again, cotangent bundles of projective spherical varieties, and Hilbert schemes of points on minimal resolutions of cyclic Kleinian singularities (among other quiver varieties).  
Such a variety comes equipped with a canonical Lagrangian ``skeleton'' $\Lag$.  

Suppose now that a symplectic variety $\resol$ comes equipped with a deformation quantization $\cW$ compatible with an elliptic $\Gm$-action.  
Taking $\cW$-modules 
equivariant for the elliptic $\Gm$-action, one arrives at a category useful for geometric representation theory.  When the action is the scaling action on fibers of $\resol = T^*X$, this category reproduces a category of twisted $\D$-modules on $X$; more generally such categories are closely related (depending on the geometry of $\resol$) to representations of Cherednik algebras, finite $W$-algebras, and more.  When the variety is bionic symplectic and the deformation quantization is compatible with both $\Gm$-actions, there is a good geometric analogue of category $\theo$ \cite{HypertoriccatO,BLPWAst,LosevCatOquant,GenCatOWebster} inside the category of $\cW$-modules, consisting of modules supported on the skeleton $\Lag$.  

The present paper establishes two basic features of the category of $\cW$-modules on a bionic symplectic variety.  We build a sheaf of proper dg algebras $\Omega$ that plays the role for $\cW$-modules analogous to that played by  the de Rham complex for $\D_X$-modules, i.e. $\cW$-modules on $\resol = T^*X$. The sheaf $\Omega$ is supported (cohomologically) on a Lagrangian $\Lambda \supset \Lag$ in $\resol$; while for general bionic symplectic varieties we do not have an explicit description of $\Lambda$, in the case where $\resol$ is a symplectic resolution (which is the case in essentially all examples of import to geometric representation theory), we have $\Lambda = \Lag$ (c.f. \ref{thm:symprescohsupport} below). 

%We define a special notion of quasi-isomorphism, $\mathsf{W}$-equivalence, for dg $\Omega$-modules.  We then prove:

%\begin{thm}\label{thm:main1}
%\mbox{}Assume that locally free resolutions exist for coherent $\cW$-modules. 
%\begin{enumerate}
%\item
%The bounded derived category of coherent $\cW$-modules is equivalent to the category of perfect dg $\Omega$-modules up to $\mathsf{W}$-equivalence.  
%\item A morphism of  $\Omega$-modules is a  $\mathsf{W}$-equivalence if and only if it is a quasi-isomorphism in the naive sense after analytification.  
%\item A morphism of  $\Omega$-modules is a  $\mathsf{W}$-equivalence if and only it induces an isomorphism of the corresponding representable functors from $\Omega$-modules to sheaves of dg vector spaces.
%\end{enumerate}
%\end{thm}
%\noindent
%Precise statements can be found in the body of the paper.

\begin{thm}\label{thm:main1}
\mbox{}Assume that locally free resolutions exist for coherent $\cW$-modules. Then the bounded derived category of coherent $\cW$-modules is equivalent to the derived category of coherent dg $\Omega$-modules:
\[
D^b(\Good{\cW}) \iso D(\Lmod{\Omega}).
\]
\end{thm}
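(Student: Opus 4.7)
The plan is to realize the equivalence by Koszul duality, following the template of Kapranov's equivalence between $\D$-modules on $X$ and dg modules over the de Rham complex $\Omega_X$. Construct a canonical bounded complex $\cK$ of locally free $\cW$-modules playing the role of the Spencer resolution, and define $\Omega := \sEnd^{\bullet}_\cW(\cK)$; then realize the desired equivalence as the adjoint pair
\[
\Phi := R\sHom_\cW(\cK,-) : D^b(\Good{\cW}) \longrightarrow D(\Lmod{\Omega}), \qquad \Psi := \cK \otimes^{\L}_\Omega - : D(\Lmod{\Omega}) \longrightarrow D^b(\Good{\cW}).
\]

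First I would construct $\cK$ locally around each fixed point of the Hamiltonian $\Gm$-action, using quantized Darboux-style coordinates adapted to the attracting set, as a finite Koszul resolution of an appropriate cyclic $\cW$-module; then show that the local models glue to a globally defined complex on $\resol$, using compatibility with the elliptic $\Gm$-action. From this it should follow that $\Omega$ is cohomologically supported on a Lagrangian $\Lambda \supset \Lag$, has finite cohomological amplitude, and is proper in the appropriate sense. The standing hypothesis on locally free resolutions of coherent $\cW$-modules then makes both $\Phi$ and $\Psi$ well-defined functors and gives the adjunction $(\Psi,\Phi)$. To verify $\Psi \circ \Phi \iso \id$, one reduces via locally-free resolutions to the case $M = \cW$, where the statement becomes the local Koszul identity $\cK \otimes^{\L}_\Omega R\sHom_\cW(\cK,\cW) \iso \cW$ expressing that $\cK$ resolves a cyclic module. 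For the reverse composition $\Phi \circ \Psi \iso \id$, one uses the generation of $D(\Lmod{\Omega})$ by the free rank-one dg module, reducing to the definitional identity $R\sHom_\cW(\cK,\cK) \iso \Omega$, which encodes the ``Koszulity'' of $\cK$.

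The principal obstacle is the globalization of the Spencer-type complex $\cK$: while one can write $\cK$ down explicitly in attracting coordinates near each fixed point, patching these local constructions into a single complex of locally free $\cW$-modules on all of $\resol$ requires delicate use of the bionic structure---in particular, the interplay between the two $\Gm$-actions and the properness of the attracting map onto $\Lag$. A secondary, but crucial, obstacle is the matching of derived categorical sizes: $D(\Lmod{\Omega})$ can a priori contain unbounded objects while coherent $\cW$-complexes are bounded, so one must exploit the finite cohomological amplitude and properness of $\Omega$ to ensure that $\Phi$ lands in the right target and that its essential image generates $D(\Lmod{\Omega})$. Granting these two ingredients, the equivalence follows from the Koszul adjunction together with the reduction to free modules on both sides.
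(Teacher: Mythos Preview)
Your overall architecture --- build a complex $\cK$ of locally free $\cW$-modules, set $\Omega = \sEnd_\cW(\cK)$, and run the Hom/tensor adjunction --- is the paper's. But both of your main steps hide the actual content.

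\textbf{Construction of $\cK$.} You propose to build $\cK$ locally near each fixed point as a Koszul resolution of a cyclic module and then glue. The paper does not attempt this, and it is unclear it can be made to work: the skeleton $\Lag$ has one irreducible component per fixed point, so there is no single cyclic module to resolve, and local Koszul complexes depend on non-canonical coordinate choices that do not patch. Instead the paper takes $\mc{L} = \bigoplus_{r} \R(j_r)_* \delta_{\Lag_r}$, a direct sum over all coisotropic cells of the derived pushforward of the simple on that cell's Lagrangian piece, and then resolves $\mc{L}$ by the standing hypothesis. That each $\R(j_r)_*\delta_{\Lag_r}$ is bounded with holonomic cohomology is Theorem~\ref{thm:holonomicpushforward}, itself a substantial result proved via completion along the cell and a weighted $V$-filtration argument.

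\textbf{The equivalence.} Your plan to verify $\Psi\circ\Phi \cong \id$ by reducing to $M=\cW$ and invoking ``the local Koszul identity $\cK \otimes^{\mathbb{L}}_\Omega \cK^\vee \cong \cW$'' is exactly where the difficulty sits. The \emph{underived} identity $\cE^\vee \otimes_{\Omega_\hbar} \cE \cong \cW$ is easy and holds for any nonzero $\cE$ (Lemma~\ref{lem:EoverOmega}(2)); but the paper explicitly remarks that $\cE$ is \emph{not} $K$-flat over $\Omega_\hbar$, so this does not upgrade to a derived statement for free. The paper therefore argues differently: the underived functors $F,G$ are already inverse equivalences of \emph{homotopy} categories (Lemma~\ref{lem:homotopyequiv}), and the only question is whether both descend to derived categories, i.e.\ preserve acyclics. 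Since $G=\underline{\sHom}(\cE,-)$ does (as $\cE$ is locally free over $\cW$), one must show that $F$ sends acyclic coherent $\Omega$-complexes to acyclic $\cW$-complexes. By Lemma~\ref{lem:localgeneratorderivedex} this is equivalent to $\mc{L}$ being a \emph{local generator}: $\R\sHom_\cW(\mc{L},\ms{M})$ acyclic $\Rightarrow \ms{M}\cong 0$. Proving this (Theorem~\ref{thm:localgeneratorv3}) is the heart of the paper and is done by induction on the cells: one uses Hamiltonian reduction and the adjunction of Proposition~\ref{prop:adjunctionforHamperp} to pass to the coisotropic reduction $S\cong T^*V$, where the base case is Kapranov's $\D$--$\Omega$ duality (Proposition~\ref{prop:localgeneratorcotangent}). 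Your ``reduce to free modules'' step presupposes that $\Psi$ is well-defined on derived categories and that the derived composite is the identity on $\cW$; both are \emph{consequences} of the local-generator property, not shortcuts around it.
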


As in \cite{BDMN}, we define the category of "quasi-coherent $\cW$-modules'' $\QCoh{\cW}$ to be the ind-category of good $\cW$-modules. We show that Theorem~\ref{thm:main1} extends to the derived category $D(\QCoh{\cW})$, though not in the obvious way. We introduce the "exotic'' derived category $D_{ex}(\LMod{\Omega})$ as a certain quotient of $K(\LMod{\Omega})$, admitting a quotient functor $D_{ex}(\LMod{\Omega}) \to D(\LMod{\Omega})$  to the usual derived category of dg $\Omega$-modules. 

\begin{thm}\label{thm:main3}
    The category $D(\Lmod{\Omega})$ is a full (triangulated) subcategory of $D_{ex}(\LMod{\Omega})$ such that the equivalence of Theorem~\ref{thm:main1} extends to an equivalence
    \[
    D(\QCoh{\cW}) \iso D_{ex}(\LMod{\Omega}).
\]
\end{thm}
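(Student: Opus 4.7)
The plan is to promote the equivalence of Theorem~\ref{thm:main1} to the desired equivalence by passing to ind-completions on both sides. On the $\cW$-side, since $\QCoh{\cW}$ is by definition $\operatorname{Ind}(\Good{\cW})$, it is a Grothendieck abelian category whose compact objects are the good modules. Granted the standing hypothesis that locally free resolutions exist for coherent $\cW$-modules, together with the standard argument that bounded coherent complexes become compact generators in the derived category of a well-behaved compactly generated abelian category, one identifies
\[
D(\QCoh{\cW}) \simeq \operatorname{Ind}\bigl(D^b(\Good{\cW})\bigr).
\]

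On the $\Omega$-side, I would take the analogous identification as the \emph{definition} of $D_{ex}(\LMod{\Omega})$: set
\[
D_{ex}(\LMod{\Omega}) := \operatorname{Ind}\bigl(D(\Lmod{\Omega})\bigr),
\]
realised concretely as the localisation of $K(\LMod{\Omega})$ at the smallest class of morphisms, closed under filtered homotopy colimits, that contains every quasi-isomorphism of coherent dg $\Omega$-modules. With this definition the Yoneda embedding provides the fully faithful inclusion $D(\Lmod{\Omega}) \hookrightarrow D_{ex}(\LMod{\Omega})$ of the compact objects; and the functor ``take the honest colimit in $D(\LMod{\Omega})$'' gives a triangulated localisation $D_{ex}(\LMod{\Omega}) \twoheadrightarrow D(\LMod{\Omega})$, which is precisely the quotient functor stipulated in the theorem. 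Combining the two identifications, the equivalence of Theorem~\ref{thm:main1} is a triangulated equivalence between small triangulated categories, so by functoriality of $\operatorname{Ind}$ it extends canonically to
\[
D(\QCoh{\cW}) \simeq \operatorname{Ind}\bigl(D^b(\Good{\cW})\bigr) \iso \operatorname{Ind}\bigl(D(\Lmod{\Omega})\bigr) = D_{ex}(\LMod{\Omega}).
\]

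The main obstacle is the first identification $D(\QCoh{\cW}) \simeq \operatorname{Ind}(D^b(\Good{\cW}))$. In the commutative setting this is classical, but for modules over a sheaf of DQ-algebras two issues must be addressed beyond the abelian-level statement $\QCoh{\cW} = \operatorname{Ind}(\Good{\cW})$: first, that the compact objects of $D(\QCoh{\cW})$ are split-generated by bounded complexes of good modules, which uses the hypothesized locally free resolutions to produce honest compact generators; and second, that filtered colimits of good modules are sufficiently well-behaved for filtered colimits in $\QCoh{\cW}$ to agree with filtered homotopy colimits in $D(\QCoh{\cW})$ on coherent complexes. The latter typically follows from exactness of filtered colimits in a Grothendieck abelian category together with a $t$-structure argument, but requires care because $\cW$ is noncommutative and not globally supported on an affine base.

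A secondary technical issue is checking that $D_{ex}(\LMod{\Omega})$, defined as a localisation of $K(\LMod{\Omega})$, inherits a genuine triangulated structure so that the quotient to $D(\LMod{\Omega})$ is triangulated; presenting $D_{ex}$ instead as the derived category of a suitable dg category of ``ind-coherent'' complexes, rather than as a naive Verdier quotient, sidesteps this and makes the comparison with the ind-completion $\operatorname{Ind}\bigl(D(\Lmod{\Omega})\bigr)$ automatic.
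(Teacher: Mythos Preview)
Your proposal has a structural gap: you are treating $D_{ex}(\LMod{\Omega})$ as something you may \emph{define} to be $\operatorname{Ind}\bigl(D(\Lmod{\Omega})\bigr)$, but the paper has already fixed its meaning in the paragraph preceding the theorem---it is a specific Verdier quotient of $K(\LMod{\Omega})$. Concretely, the paper builds explicit functors
\[
G(\ms{M})=\underline{\sHom}_{\cW}(\mc{E},\ms{M})^{\Gm},\qquad
F(\ms{N})=\widetilde{\ms{N}}\otimes_{\Omega_\hbar}\mc{E},
\]
and shows (Lemma~\ref{lem:homotopyequiv}) that these are mutually inverse equivalences already at the level of \emph{complexes}, hence on homotopy categories $K(\QCoh{\cW})\simeq K(\LMod{\Omega})$. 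One then \emph{defines} $\mc{N}_F\subset K(\LMod{\Omega})$ to be the $F$-acyclic objects and sets $D_{ex}(\LMod{\Omega})=K(\LMod{\Omega})/\mc{N}_F$. With this definition the equivalence $D(\QCoh{\cW})\simeq D_{ex}(\LMod{\Omega})$ is essentially tautological: acyclic objects on the $\cW$-side correspond under $G$ exactly to $F$-acyclic objects on the $\Omega$-side. Lemma~\ref{lem:Facyclicacyclic} then gives the quotient functor $D_{ex}\to D$, and the full embedding of $D(\Lmod{\Omega})$ follows from the commutative square comparing with $D^b(\Good{\cW})\hookrightarrow D(\QCoh{\cW})$.

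The second, related issue is logical order. You invoke Theorem~\ref{thm:main1} as input and then ind-complete, but the paper explicitly proceeds the other way: Theorem~\ref{thm:main3} is established first (by the construction just described), and Theorem~\ref{thm:main1} is then obtained by showing that on \emph{coherent} objects $F$-acyclic coincides with acyclic (Lemma~\ref{lem:localgeneratorderivedex}), which is where the local-generator hypothesis actually does work. So deducing Theorem~\ref{thm:main3} from Theorem~\ref{thm:main1} is circular relative to the paper's architecture. Even setting that aside, to make your route rigorous you would have to prove that the paper's $D_{ex}$ agrees with your $\operatorname{Ind}\bigl(D(\Lmod{\Omega})\bigr)$; the paper's Proposition characterising $\mc{N}_F$ as the coproduct-closure of $\mathrm{Acyc}(\Lmod{\Omega})$ is a step in that direction, but you have not supplied the comparison.
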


In fact, we first establish the equivalence of Theorem~\ref{thm:main3} and then show that this restricts to the equivalence given in Theorem~\ref{thm:main1}.

Since any good $\cW$-module locally admits a resolution by free $\cW$-modules, we derive, as a formal consequence, the following ``sheafification over the Lagrangian skeleton'':
\begin{thm}\label{thm:main2}
The bounded derived category of coherent $\cW$-modules on $\resol$ is the category of global sections of a sheaf of dg categories over $\Lambda$.  
\end{thm}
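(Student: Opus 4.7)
The plan is to realize the equivalence of Theorem~\ref{thm:main1} as the global sections of a local equivalence. Concretely, since $\Omega$ is a sheaf of dg algebras cohomologically supported on $\Lambda$, its restriction to any open $U\subset \Lambda$ is itself a sheaf of dg algebras on $U$, and one obtains a candidate presheaf of dg categories on $\Lambda$ by the prescription
\[
\underline{\cC}(U) := D\bigl(\Lmod{\Omega|_U}\bigr),
\]
with transition functors given by restriction of dg modules. The target in Theorem~\ref{thm:main1} is $\underline{\cC}(\Lambda)$, so once $\underline{\cC}$ is shown to be a sheaf of dg categories its global sections will automatically be identified with $D^b(\Good{\cW})$, which is exactly the content of Theorem~\ref{thm:main2}.

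The first step is thus to promote $\underline{\cC}$ from a presheaf to a sheaf of dg categories. This is standard derived descent for dg modules over a sheaf of dg algebras on a topological space: for any open cover $\{U_i\}$ of $U\subset \Lambda$, an $\Omega|_U$-module is recovered as a homotopy limit of $\Omega|_{U_i}$-modules with descent data on the overlaps. The point where the hypothesis of Theorem~\ref{thm:main1}—that good $\cW$-modules admit free resolutions locally—enters is in ensuring that the construction of $\Omega$ itself is local on $\Lambda$, so that the dg algebra $\Omega|_U$ really is computed by the same recipe (derived endomorphisms of a local generator of geometric category $\cO$) applied in a neighborhood of $U$ in $\resol$.

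The second step is the identification $\underline{\cC}(\Lambda)\simeq D^b(\Good{\cW})$, which is exactly Theorem~\ref{thm:main1} (one may equally well start from Theorem~\ref{thm:main3} and restrict to the compact/bounded-coherent subcategory on either side). Combined with the sheaf property of $\underline{\cC}$, this realizes $D^b(\Good{\cW})$ as $\Gamma(\Lambda,\underline{\cC})$ and concludes the proof.

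The main obstacle is conceptual rather than computational: verifying that $\underline{\cC}$ satisfies descent in the appropriate dg/$\infty$-categorical sense, and in particular that the local generators used to produce $\Omega|_U$ glue compatibly under refinement of opens in $\Lambda$. Once this locality of the construction of $\Omega$ is in place—which is where the hypothesis on local free resolutions of good $\cW$-modules is essential—the theorem follows formally, as indicated in the paper.
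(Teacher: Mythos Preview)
Your approach is essentially the one the paper indicates: the paper does not give a detailed proof of this theorem, but states it as a ``formal consequence'' of the construction of $\Omega$ together with the fact that any good $\cW$-module \emph{locally} admits a free resolution. Your outline---build the presheaf $U\mapsto D(\Lmod{\Omega|_U})$, check descent, and identify global sections via Theorem~\ref{thm:main1}---is exactly the intended argument, and is in fact more explicit than what the paper writes.

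Two small points of precision are worth flagging. First, $\Omega$ is a sheaf of dg algebras on $\resol_{\Eq}$, not on $\Lambda$; only its cohomology is supported on $\Lambda$. So the assignment $V\mapsto D(\Lmod{\Omega|_V})$ for $V\subset\Lambda$ open is not literally defined; you should rather take $V\mapsto D(\Lmod{\Omega|_U})$ for any $\Gm$-stable open $U\subset\resol$ with $U\cap\Lambda=V$, and use the cohomological support condition to see this is independent of the choice of $U$ (up to equivalence). Second, note that the hypothesis of Theorem~\ref{thm:main1} is the existence of \emph{global} locally free resolutions, whereas the input to Theorem~\ref{thm:main2} is only that such resolutions exist \emph{locally}---which is automatic. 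This is precisely why the paper phrases it as ``since any good $\cW$-module locally admits a resolution by free $\cW$-modules'': the local equivalences of Theorem~\ref{thm:main1} always exist and are what one glues.
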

Beyond \cite{Kap}, the paper draws inspiration from the structure of Fukaya categories of Weinstein manifolds in real symplectic geometry 
as layed out by
Kontsevich \cite{Ko}, explored in important works by several authors (including, among others, Abouzaid, Seidel, Soibelman, Tamarkin, Tsygan, Zaslow)
and given flesh in a categorical context in work of 
Nadler \cite{Nadler}.  Links between Fukaya categories and deformation quantizations form an established theme in the field.  Theorems 
\ref{thm:main1} and, especially, \ref{thm:main2} may be viewed as furthering those connections. 

\subsection{Symplectic resolutions}

In the case where $\resol$ is a symplectic resolution of its affinization $X := \resol^{\mathrm{aff}}$, it was shown in \cite{BLPWAst} that, possibly after shifting by a quantized line bundle, we may assume localization holds. This implies that bounded locally free resolutions exist for coherent $\cW$-modules. Therefore, the assumptions of Theorem~\ref{thm:main1} are satisfied. Moreover, in this situation we have greater control over the cohomological support of $\Omega$. Namely, 

\begin{thm}\label{thm:symprescohsupport}
Assume that $\resol \to X$ is a symplectic resolution. Then the sheaf of dg algebras $\Omega$ is cohomologically supported on the canonical Lagrangian skeleton $\Lag$. 
\end{thm}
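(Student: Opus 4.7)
The plan is to show that, in the symplectic resolution setting, the local generator $\cP \in \Good{\cW}$ underlying the construction of $\Omega$ can be chosen to be a genuine coherent $\cW$-module with set-theoretic support on $\Lag$. Since derived sheaf Hom commutes with restriction to open subsets, cohomological support of $\Omega \simeq R\sEnd_{\cW}(\cP)$ on $\Lag$ will then follow formally.

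First, I would invoke the localisation results of \cite{BLPWAst}: when $\resol \to X$ is a symplectic resolution, a suitable twist of $\cW$ by a quantised ample line bundle admits global localisation, yielding an equivalence $\Gamma : \Good{\cW} \iso \Lmod{A}$ with $A = \Gamma(\resol,\cW)$ of finite global dimension. This guarantees locally free resolutions for all coherent $\cW$-modules, so the hypothesis of Theorem~\ref{thm:main1} is satisfied unconditionally. More importantly, it supplies the extra structure needed for the next step: an honest algebra $A$ of finite global dimension and enough projectives in $\Lmod{A}$.

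Second, I would argue that the local generator $\cP$ of geometric category $\theo$ can be taken to be a genuinely coherent $\cW$-module set-theoretically supported on $\Lag$. Under $\Gamma$, geometric category $\theo$ corresponds to the subcategory of finitely generated $A$-modules that are locally finite for the Hamiltonian $\Gm$-action with suitably bounded weights. This subcategory is artinian and admits a projective generator $P$ (the standard "big projective"); setting $\cP := \cW \otimes_A P \in \Good{\cW}$ gives a local generator of geometric category $\theo$. Its characteristic variety (the support of the associated graded with respect to any good filtration) lies in $\Lag$, and since for good $\cW$-modules the set-theoretic support coincides with the support of the characteristic variety, $\cP$ is set-theoretically supported on $\Lag$.

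Third, the cohomological support statement is then formal. Writing $j : U := \resol \setminus \Lag \hookrightarrow \resol$ for the open inclusion, we have $j^{*}\cP = 0$, and because derived sheaf Hom commutes with restriction to open subsets,
\[
j^{*} \Omega \;\simeq\; j^{*} R\sEnd_{\cW}(\cP) \;\simeq\; R\sEnd_{\cW|_U}(j^{*}\cP) \;=\; 0,
\]
so each $\mathcal{H}^i(\Omega)|_U = 0$, which is the desired assertion. The main obstacle is the second step: in the general bionic case, one may only be able to produce the local generator as a pro-object or via a non-compact presentation whose naive support exceeds $\Lag$, accounting for the a priori larger Lagrangian $\Lambda$ of the introduction. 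Global localisation and finite global dimension on a symplectic resolution are precisely what allow such a pro-object to be replaced by a coherent $\cW$-module with support exactly $\Lag$, thereby tightening the cohomological support of $\Omega$ from $\Lambda$ to $\Lag$.
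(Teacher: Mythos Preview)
Your overall architecture is right and matches the paper's: produce a coherent $\cW$-module $\mc{L}$ in geometric category $\theo$ (hence supported on $\Lag$), note that $\mc{H}^{\idot}(\Omega)\cong \mc{E}xt^{\idot}_{\cW}(\mc{L},\mc{L})$ inherits this support, and conclude. Steps 1 and 3 are fine and essentially appear in the paper.

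The gap is in Step 2. You assert that the big projective $P$ of algebraic category $\theo$ localises to a \emph{local generator} of $D^b(\Good{\cW})$, but you have not proven this, and the phrase ``local generator of geometric category $\theo$'' conflates two distinct notions. In the paper, ``local generator'' (Definition~4.5) means: for every $\ms{M}\in D^b(\Good{\cW})$, acyclicity of the \emph{sheaf} $\R\sHom_{\cW}(\mc{L},\ms{M})$ forces $\ms{M}\cong 0$. This must hold for arbitrary coherent $\cW$-modules, not just those in category $\theo$, and is a local condition, not a consequence of $P$ generating $\Lmod{A}$ globally or of $\End_A(P)$ having nice properties. Producing a coherent $\mc{L}$ supported on $\Lag$ is the easy half; the substance of the theorem is verifying the local-generator property for that particular $\mc{L}$, and this is where all the work lies.

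The paper does \emph{not} use a projective generator. It takes $\mc{L}=\bigoplus_r \mc{L}_r$ where each $\mc{L}_r$ is the simple holonomic extension of $\delta_{\Lag_r}$ supplied by Proposition~3.12 (which already uses localisation to force support into $\Lag$). The proof that this $\mc{L}$ is a local generator then runs by induction on coisotropic cells: restrict to the open union of cells to kill $\ms{M}$ there, so $\ms{M}=\Ham^{\perp}(\ms{M}')$ is supported on the closed cell $C_N$; then observe that $\mc{L}_N=\Ham^{\perp}(\delta_{V_N})$, so $\Ham^*(\mc{L}_N)\cong \delta_{V_N}$ is a local generator on $S_N\cong T^*V_N$ by the cotangent case (Proposition~4.12), forcing $\ms{M}'=0$. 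The specific feature being exploited is that each $\mc{L}_r$ restricts to $\delta_{\Lag_r}$ on the appropriate open set, so Hamiltonian reduction recovers the known local generator on each reduced piece. Your big projective has a standard filtration whose subquotients have this property, so an argument along these lines is plausible, but it is not automatic and you have not supplied it.
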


The key to Theorem~\ref{thm:symprescohsupport} is the fact that one can choose a local generator $\mc{L}$ (whose dg endomorphism ring is $\Omega$) supported on $\Lag$; see Proposition~\ref{simple extension prop}.  

\subsection{Extension of holonomic complexes}

In order to prove the main theorem we require a number of auxiliary results that are of independent interest. We establish a full recollmenent pattern for holonomic modules on symplectic varieties with elliptic action. Let $C$ be a closed coisotropic cell (this means that $C$ is an attracting set for the  elliptic action) and $U$ its complement. We form the usual diagram
\[
C \stackrel{i}{\hookrightarrow} \resol \stackrel{j}{\hookleftarrow} U.
\]
Let $D^b_{\hol,C}(\cW_{\resol})$ denote the bounded derived category of $\cW$-modules with holonomic cohomology supported on $C$. As noted in the introduction to \cite{BDMN}, we show that:

\begin{thm}
There is a functor $\R i^! \colon D^b_{\hol}(\cW_{\resol}) \to D^b_{\hol,C}(\cW_{\resol})$. Equivalently, $\R j_*$ restricts to a functor $\R j_* \colon D^b_{\hol}(\cW_{U}) \to D^b_{\hol}(\cW_{\resol})$.   
\end{thm}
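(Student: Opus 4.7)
The plan is first to deduce the equivalence of the two assertions from standard recollement, and then to establish the concrete statement that $\R j_*$ preserves bounded holonomic complexes.

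For the equivalence, the distinguished triangle
\[
\R i_* \R i^! M \to M \to \R j_* j^* M
\]
in $D^b(\cW_\resol)$, combined with Kashiwara's theorem for $\cW$-modules (which makes $\R i_*$ fully faithful and t-exact between categories of holonomic modules, with quasi-inverse $\R i^!$ on complexes supported on $C$), reduces the claim for $\R i^!$ to showing that $\R j_* j^* M \in D^b_{\hol}(\cW_\resol)$ for each $M \in D^b_{\hol}(\cW_\resol)$. Since every holonomic $\cW_U$-module extends to a holonomic $\cW_\resol$-module (take any coherent extension and pass to its holonomic part, shrinking $U$ if needed), this is in turn equivalent to the $\R j_*$ statement.

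For the main claim, I propose to work locally in a $\Gm$-equivariant neighborhood of a fixed-point component $F$ of $\resol$ to which $C$ contracts under the elliptic action. Bialynicki-Birula structure theory gives such a neighborhood the form of a weight-decomposed normal bundle, with $C$ appearing as the attracting subbundle. Any holonomic $\cW_U$-module locally admits a good filtration, so by a Rees-type deformation one may reduce to the case where $M$ is $\Gm$-monodromic (weight-decomposable). For such $M$, the pushforward $\R j_* M$ can be computed by a finite \v{C}ech complex: cover $U$ by $\Gm$-stable affine opens on which a positive-weight defining function of $C$ becomes invertible. Each term of this complex is manifestly holonomic, so $\R j_* M$ is bounded with holonomic cohomology, and a standard patching argument extends the result globally.

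The main obstacle I anticipate is compatibility of the \v{C}ech-type computation with the $\hbar$-adic completeness built into $\cW$-modules: $\R j_*$ does not a priori commute with $\hbar$-adic completion, so one must verify that the weight-by-weight completion of a $\Gm$-monodromic module introduces no unbounded higher cohomology. The elliptic $\Gm$-action plays the role here that Deligne's compactification plays in the classical D-module argument, with the crucial input being that the finite weight spectrum of a $\Gm$-monodromic holonomic module reduces the completion problem to a finite-dimensional one in each weight. Once this technical compatibility is secured, preservation of bounded holonomic complexes under $\R j_*$, and equivalently under $\R i^!$, follows.
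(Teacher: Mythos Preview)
Your reduction of the two formulations to one another is correct and matches the paper: the recollement triangle together with the fact that holonomic modules on $U$ extend to holonomic modules on $\resol$ (this is \cite[Corollary~3.33]{BDMN}) makes the two statements equivalent.

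The main argument, however, has a genuine gap. You claim that after covering $U$ by affines $D(f_i)$ with $f_i$ a positive-weight defining function of $C$, ``each term of this complex is manifestly holonomic.'' But a \v{C}ech term is (up to the $\hbar$-adic subtlety you flag) the localization $M[f_I^{-1}]$, viewed as a $\cW_\resol$-module. Why is that holonomic? For $\mathcal{D}$-modules on a smooth variety this is Bernstein's theorem, resting on the existence of $b$-functions; there is no such theory available off the shelf for $\cW$-modules on a general symplectic variety. In effect you are assuming, in the hypersurface-complement case, exactly the statement you are trying to prove, and neither the monodromic reduction nor the weight-finiteness remark supplies the missing input: inverting a positive-weight function mixes infinitely many weight components, so nothing becomes ``finite-dimensional in each weight.'' The $\hbar$-adic issue you isolate is real but secondary to this.

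The paper's proof proceeds quite differently. It proves the $\R i^!$ formulation directly: one completes $A_\hbar$ along the ideal of $C$ to obtain $\widehat{A}\cong B\,\widehat{\otimes}\,\widehat{\mathcal{D}}$, reduces via flatness to showing that $\widehat{J}'\backslash\widehat{A}\otimes^{\mathbb{L}}\widehat{M}$ has holonomic cohomology over $B_\hbar$, and then introduces a weighted $V$-filtration (in the Kashiwara--Malgrange spirit) on the $\Gm$-invariants $\widehat{D}_{n,m}$ of $\widehat{A}_\hbar$. Passing to the Rees algebra in the $V$-direction gives a one-parameter family whose fibre at $T=0$ is the ordinary Weyl algebra $D_{n+m}$; there the classical fact that $i_0^!$ preserves holonomic $D$-modules applies, and a semicontinuity argument transports this back to $T=1$. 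So the substitute for your missing $b$-function input is a deformation to the classical Weyl-algebra situation via the $V$-filtration, not a direct \v{C}ech computation.
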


Since the duality functor $\mathbb{D}$ preserves holonomicity, we can set 
\[
i^* = \mathbb{D} \circ \R i^! \circ \mathbb{D}, \quad j_! = \mathbb{D} \circ \R j_* \circ \mathbb{D},
\]
and deduce the following. 

\begin{corollary}
    There is a full recollment pattern
    \[
    \begin{tikzcd}
    D^b_{\hol,C}(\cW_{\resol}) \ar[rr,"{i_*}"] & & \ar[ll,"{\R i^!}"',bend right=20] \ar[ll,"{i^*}",bend left=15] D^b_{\hol}(\cW_{\resol}) \ar[rr,"{j^*}"] & &  D^b_{\hol}(\cW_{U})  \ar[ll,"{\R j_*}"',bend right=20] \ar[ll,"{j_!}",bend left=15].    
    \end{tikzcd}
    \]
\end{corollary}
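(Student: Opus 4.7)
The plan is to derive the recollement on holonomic subcategories by restricting a ``half recollement'' that already exists on the full derived categories of $\cW$-modules, and to supply the missing adjoints on the left via the duality functor $\mathbb{D}$. All that is really needed beyond the standard six-functor formalism is that each of the six functors preserves holonomicity and appropriate supports; the preceding theorem provides the nontrivial input.

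First, I would recall the ambient structure on the unrestricted derived categories. One has adjoint pairs $i_* \dashv \R i^!$ and $j^* \dashv \R j_*$; the functor $i_*$ is fully faithful, $j^*$ admits the fully faithful right adjoint $\R j_*$, and the unit/counit maps fit into a natural localization triangle $i_* \R i^! M \to M \to \R j_* j^* M \to i_* \R i^! M[1]$. Next, I would verify that each of $i_*$, $j^*$, $\R i^!$, $\R j_*$ preserves holonomicity (and, where applicable, support in $C$): the first two are clear because $i$ is a closed immersion and $j$ is open, while the last two are exactly the content of the preceding theorem. Consequently this triangle restricts to the holonomic categories.

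For the left half of the recollement I define $i^* := \mathbb{D} \circ \R i^! \circ \mathbb{D}$ and $j_! := \mathbb{D} \circ \R j_* \circ \mathbb{D}$. Because $\mathbb{D}$ is an anti-equivalence of the holonomic categories satisfying $\mathbb{D}^2 \cong \id$ and commuting with $i_*$ (as $i$ is closed) and with $j^*$ (as $j$ is open), any adjunction $F \dashv G$ is carried to $\mathbb{D} G \mathbb{D} \dashv \mathbb{D} F \mathbb{D}$; applying this formally to $(i_*, \R i^!)$ and to $(j^*, \R j_*)$ yields the adjunctions $i^* \dashv i_*$ and $j_! \dashv j^*$. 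The orthogonality $j^* i_* = 0$ is inherited from the ambient recollement, and the second localization triangle $j_! j^* M \to M \to i_* i^* M \to j_! j^* M[1]$ is obtained by applying $\mathbb{D}$ to the first triangle with $\mathbb{D} M$ in place of $M$, using the commutations above. The main obstacle---already resolved by the preceding theorem---is the preservation of holonomicity by $\R i^!$ and $\R j_*$; without it, neither the second triangle nor the putative left adjoints $i^*$, $j_!$ would descend to the holonomic subcategories via duality. Once that input is in hand, the remaining checks are formal consequences of duality applied to the standard half recollement.
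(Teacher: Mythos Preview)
Your proposal is correct and follows essentially the same approach as the paper: restrict the ambient half-recollement $(i_*,\R i^!)$, $(j^*,\R j_*)$ to holonomic categories using the preceding theorem, then manufacture the missing left adjoints $i^*=\mathbb{D}\,\R i^!\,\mathbb{D}$ and $j_!=\mathbb{D}\,\R j_*\,\mathbb{D}$ and obtain the second triangle by applying $\mathbb{D}$ to the first. The paper treats the corollary as an immediate formal consequence in exactly this way, without spelling out the adjunction-flipping argument you give; your more explicit justification via $\mathbb{D}$ commuting with $i_*$ and $j^*$ is a welcome elaboration rather than a different method.
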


If $S$ denotes the coisotropic reduction of $C$ then it is shown in \cite[Theorem~1.7(2)]{BDMN} that the functor of Hamiltonian reduction $\Ham$ identifies $D^b_{\hol,C}(\cW_{\resol})$ with $D^b_{\hol}(\cW_{S})$. 

The proof of these results is given in Section~\ref{sec:preserveholonomic}. 

\subsection{Categorical conventions}\label{sec:categoryconvention}

Let $\mc{A}$ be a sheaf of algebras. If $\ms{M},\ms{N}$ are sheaves of $\mc{A}$-modules then $\sHom_{\mc{A}}(\ms{M},\ms{N})$ is the sheaf of homomorphisms from $\ms{M}$ to $\ms{N}$. $C(\mc{A})$ will denote the (abelian) category of complexes of $\mc{A}$-modules, $K(\mc{A})$ its homotopy category and $D(\mc{A})$ the unbounded derived category of $\mc{A}$-modules. If $\mc{E},\mc{F} \in C(\mc{A})$ then $\underline{\sHom}(\mc{E},\mc{F})$ denotes the complex with terms $\sHom_{C(\mc{A})}(\mc{E},\mc{F}[k])$, and the usual differential. Internal hom in $D(\mc{A})$ is denoted $\R \sHom(\mc{E},\mc{F})$. 

If $\Omega$ is a sheaf of dg-algebras, with $\Omega^i = 0$ for $|i| \gg 0$, then $\LMod{\Omega}$ is the (abelian) category of sheaves of dg $\Omega$-modules and $\Lmod{\Omega}$ denotes the category of coherent dg $\Omega$-modules. Then $K(\Lmod{\Omega})$ is the associated homotopy category and $D(\Lmod{\Omega})$ its derived category. 

%Since we will be working with sheaves of dg-algebras and functors between their module categories, it is necessary to work mainly in the setting of dg-categories. If $\mathcal A$ is a sheaf of dg-algebras on a space $X$, let $DG^0(\mathcal A)$ denote the abelian category of $\mathcal A$-modules equipped with degree preserving morphisms compatible with differentials, and $DG(\mathcal A)$ the dg category of $\mathcal A$-modules. Thus the morphisms in $DG^0(\mathcal A)$ are the cycles in $\underline{\Hom}^0(M,N)$ where $\underline{\Hom}$ denotes the morphism complex in $DG(\mathcal A)$, and hence are called \textit{closed} morphisms. 

%If $\mc{E},\mc{F}$ are complexes of sheaves of $\mc{A}$-modules then $\underline{\Hom}_{\mc{A}}(\mc{E},\mc{F}) \in C(\C_X)$ will denote the (explicit) complex of sheaves of homomorphisms from $\mc{E}$ to $\mc{F}$, where as $\R \Hom_{\mc{A}}(\mc{E},\mc{F}) \in D(\C_X)$ will denote internal hom in $D(\mc{A})$.   

%We choose to work in the generality of dg-categories to make the presentation and arguments more transparent. However, one can work throughout with the usual (triangulated) derived category of dg-modules. 

\subsection*{Acknowledgments}
The authors are grateful to D. Ben-Zvi, K. Kremnitzer, J. Lebowski, D. Nadler, and D. Treumann for inspiration and helpful conversations.  We would also like to thank the two referees who read the first version of the paper and provided many helpful comments, which undoubtedly improved the exposition in many places.

%The first author is supported by the EPSRC grant EP-H028153.   
The first author was partially supported by a Research Project Grant
from the Leverhulme Trust and by the EPSRC grants EP-W013053-1 and EP-R034826-1. The third author was supported by a Royal Society research fellowship.  The fourth author was supported by NSF grants DMS-0757987 and DMS-1159468 and NSA grant H98230-12-1-0216, and by an All Souls Visiting Fellowship.  All four authors were supported by MSRI and by the rising moon over Port Meadow.

\section{Bionic Symplectic Varieties}

In this section we introduce a class of elliptic symplectic varieties which come naturally equipped with a Lagrangian skeleton, and for which we will be able to show the existence of a local generator in the sense of Definition~\ref{defn:localgenerator}.

\subsection{Lagrangian Skeletons}
\label{sec:elliptic}
We begin by recalling from \cite{BDMN} the notion of an elliptic symplectic variety.

\begin{defn}
Let $(\resol,\omega)$ be a smooth symplectic variety equipped with an action $\lambda \colon \mathbb G_m\to \text{Aut}(\resol)$ of the multiplicative group. We say that $\resol$ is \textit{elliptic symplectic} if
\begin{itemize}
\item[$i)$]
The symplectic form $\omega$ has weight $\ell > 0$, \textit{i.e.} $\lambda(t)^*(\omega) = t^\ell \omega$, ($t \in \mathbb G_m(\C)$),
\item[$ii)$]
For each $x \in \resol$ the limit $\lim_{t \to \infty} \lambda(t).x$ exists.
\end{itemize}
\end{defn}

Given an elliptic symplectic variety, the fixed point locus $Y$ of the $\mathbb G_m$-action can be written as a union of connected components $Y = \bigsqcup_{i \in I} Y_i$ and this induces a partition $\resol = \bigsqcup_{i \in I} C_i$ of $X$, where $C_i = \{x \in X: \lim_{t \to\infty} \lambda(t).x \in Y_i\}$. We call the $C_i$ the \textit{coisotropic cells} of the partition. As in Lemma 2.4 of \cite{BDMN}, the collection of coisotropic cells are partially ordered by the relation defined by $C_i \geq C_j$ if $\overline{C}_j \cap C_i \neq \emptyset$. Moreover, for any choice of refinement of this order (to a partial or total order), and any index $i$, the subsets $K_{\geq i} = \bigsqcup_{j \geq i} C_j$ and $K_{>i} = \bigsqcup_{j>i} C_j$ are closed coisotropic subsets of $\resol$. 

The following is one of the main results of \cite{BDMN}.

\begin{thm}\label{thm:coistropicreduction}
Let $(\resol,\omega)$ be an elliptic symplectic variety with associated partition $\resol = \bigsqcup C_i$. Then each $C_i$ is a smooth locally-closed $\mathbb G_m$-invariant coisotropic subvariety of $X$ with a $\mathbb G_m$-equivariant coisotropic reduction $(S_i,\omega_i)$. That is, there is an equivariant morphism $\pi_i \colon C_i \to S_i$ where $S_i$ is an elliptic symplectic variety, and $\pi_i^*(\omega_i) = \omega_{|C_i}$.
\end{thm}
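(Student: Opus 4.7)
The plan is to combine the Bialynicki--Birula (BB) decomposition, a weight analysis of $\omega$ at fixed points, and the classical theory of coisotropic reduction.

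First, apply the algebraic BB theorem to the elliptic $\Gm$-action. Because the limit $\lim_{t\to\infty}\lambda(t).x$ exists for every $x\in\resol$, the map $\rho_i\colon C_i\to Y_i$ sending $x$ to this limit realises each $C_i$ as a $\Gm$-equivariant affine bundle over the smooth fixed component $Y_i$. This yields at once that each $C_i$ is smooth, locally closed, and $\Gm$-invariant, and identifies its tangent space at $y\in Y_i$ with the non-positive weight part $T_y^{\le 0}$ of $T_y\resol$ under the weight decomposition of the $\Gm$-action at the fixed point.

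Second, the coisotropic property. Since $\omega$ has weight $\ell>0$, the symplectic pairing restricts to a perfect duality $T_y^a\times T_y^{\ell-a}\to\mathbb{C}$ for each weight $a$. A direct computation from this perfect pairing identifies $(T_yC_i)^{\perp}=T_y^{<\ell}$; the elliptic structure together with the hypothesis $\ell>0$ then places this subspace inside $T_yC_i=T_y^{\le 0}$, so $C_i$ is coisotropic at $y$. By $\Gm$-equivariance of $\omega$ and closedness of the coisotropic condition, together with the fact that every point of $C_i$ is attracted by the flow to $Y_i$, this propagates from the fixed locus to all of $C_i$.

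Third, construction of the reduction $(S_i,\omega_i)$. The null distribution $\ker(\omega|_{C_i})\subseteq TC_i$ is an involutive subbundle by the classical lemma on the null foliation of a coisotropic submanifold. I would globalise the leaf space to an algebraic variety $S_i$ by combining the BB affine bundle structure with the residual $\Gm$-equivariance: the null foliation is $\Gm$-stable and sufficiently rigid, and $\pi_i\colon C_i\to S_i$ is constructed as a geometric quotient via $\Gm$-equivariant slices. The form $\omega|_{C_i}$ then descends uniquely to a non-degenerate $2$-form $\omega_i$ on $S_i$ with $\pi_i^{\ast}\omega_i=\omega|_{C_i}$; non-degeneracy holds precisely because we have quotiented by the kernel. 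Equivariance transports the weight $\ell$ of $\omega$ to $\omega_i$, and existence of $\lim_{t\to\infty}$ on $S_i$ is inherited from $C_i$ via the surjection $\pi_i$, so $(S_i,\omega_i)$ is again elliptic symplectic.

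The main obstacle is the algebraic construction of the quotient $S_i$: coisotropic reduction is a local (analytic/formal) statement in smooth symplectic geometry, and promoting the global leaf space to a quasi-projective algebraic variety with a well-behaved $\Gm$-action requires using the BB bundle structure together with $\Gm$-equivariant slices to rigidify the leaves. This equivariant globalisation is the key technical input beyond standard symplectic reduction, and is what makes the ellipticity hypothesis genuinely necessary (rather than merely convenient).
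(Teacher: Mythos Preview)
The paper does not prove this theorem: the sentence immediately preceding it says that it is ``one of the main results of \cite{BDMN}'', and no argument is reproduced here. So there is no in-paper proof to compare your proposal against.

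That said, your outline is the right one and matches what \cite{BDMN} does, as far as one can reconstruct from the lemma that follows the theorem in this paper: BB decomposition for smoothness, local-closedness, and the affine-bundle structure $C_i\to Y_i$; a weight analysis of $\omega$ at the fixed locus for coisotropicity; and the equivariant BB trivialisation to exhibit the null foliation as a linear projection, hence to build $S_i$ as an honest algebraic variety. You have correctly identified this last globalisation step as the genuine technical content.

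One caution on your Step~2. With the conventions exactly as written (\,$\lim_{t\to\infty}$ exists and $\lambda(t)^*\omega=t^{\ell}\omega$ with $\ell>0$\,), your computations $T_yC_i=T_y^{\le 0}$ and $(T_yC_i)^{\perp}=T_y^{<\ell}$ are correct, but the asserted inclusion $T_y^{<\ell}\subseteq T_y^{\le 0}$ does \emph{not} follow from ellipticity and $\ell>0$: it is equivalent to the absence of tangent weights in the open interval $(0,\ell)$, which nothing you have invoked forbids (and which is vacuous only when $\ell=1$). Under the opposite sign convention ($T_yC_i=T_y^{\ge 0}$, i.e.\ attracting as $t\to 0$), one finds $(T_yC_i)^{\perp}=T_y^{>\ell}\subseteq T_y^{\ge 0}$ immediately from $\ell>0$, and coisotropicity is automatic; this is almost certainly the computation you intended. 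The discrepancy looks like a convention issue inherited from the paper's stated definitions rather than a conceptual error in your plan.
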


One of the goals of the present paper is to understand to what extent the category of $\mathbb G_m$-equivariant modules for a quantization $\mathcal W_\resol$ of $\resol$ can be described in terms of objects supported on Lagrangians in $\resol$. The basic notion we use for this is that of a Lagrangian skeleton.

\begin{defn}
Let $(\resol,\omega)$ be an elliptic symplectic variety, and $\Lambda$ a closed (possibly singular) Lagrangian subvariety of $\resol$. We say that $\Lambda$ is a \textit{Lagrangian skeleton} for $\resol$ if
\begin{itemize}
\item[$i)$]
$\Lambda$ is $\mathbb G_m$-invariant.
\item[$ii)$]
For each $i \in I$, $\Lambda_i = \Lambda\cap C_i$ is a nonempty smooth closed subvariety of $C_i$.
\end{itemize}
\end{defn}

We will refer to $\Lambda_i$ as a \textit{Lagrangian cell}. Using the partial ordering of the coisotropic cells (and any refinement of it) we obtain closed Lagrangian subvarieties $\Lambda_{\geq i} = \bigsqcup_{j \geq i} \Lambda_j$ and $\Lambda_{>i} = \bigsqcup_{j>i} \Lambda_j$. Note that under our assumptions the closures of the $\Lambda_i$ are the irreducible components of $\Lambda$. 
The next lemma shows that a Lagrangian skeleton is compatible with the collection of symplectic reductions $\pi_i\colon C_i \to S_i$.

\begin{lemma}
If $(\resol,\omega)$ is elliptic symplectic, and $\Lambda$ is a closed $\mathbb G_m$-invariant Lagrangian subvariety such that $\Lambda \cap C_i = \Lambda_i$ is smooth, then $\pi_i(\Lambda_i)$ is a smooth Lagrangian in $S_i$ and $\Lambda_i = \pi_i^{-1}(\pi_i({\Lambda}_i))$.
\end{lemma}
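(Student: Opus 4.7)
The plan is to exploit the standard fact from linear symplectic algebra that a Lagrangian contained in a coisotropic subspace must contain the null directions of that coisotropic. Concretely, fix $x \in \Lambda_i$. Since $C_i$ is coisotropic, $(T_x C_i)^\perp \subset T_x C_i$, and since $\Lambda_i \subset C_i$ is Lagrangian in $\resol$ (so $T_x \Lambda_i = (T_x \Lambda_i)^\perp$ and $T_x\Lambda_i \subset T_xC_i$), we get
\[
(T_x C_i)^\perp \;\subset\; (T_x \Lambda_i)^\perp \;=\; T_x \Lambda_i.
\]
Thus the tangent space to $\Lambda_i$ contains the tangent space to the null foliation of $C_i$ at every point.

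Now $\pi_i \colon C_i \to S_i$ is (by Theorem~\ref{thm:coistropicreduction}, together with the structure of the null foliation for an elliptic coisotropic cell) a smooth morphism whose fibers are exactly the leaves of this null foliation. The first step above says $\Lambda_i$ is foliated by pieces of $\pi_i$-fibers. Since $\Lambda_i$ is closed in $C_i$, $\Gm$-invariant, and the null-foliation leaves are irreducible (being contracting cells for the Hamiltonian/null flow), any leaf meeting $\Lambda_i$ is entirely contained in $\Lambda_i$. Therefore $\Lambda_i$ is saturated: $\Lambda_i = \pi_i^{-1}(\pi_i(\Lambda_i))$, which gives the second assertion. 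Smoothness of $\pi_i(\Lambda_i)$ follows by working locally: since $\pi_i$ is smooth and $\Lambda_i$ is a smooth saturated closed subvariety, we may \'etale-locally trivialize $\pi_i$ as $U \times F \to U$, under which $\Lambda_i$ corresponds to $\pi_i(\Lambda_i) \times F$; smoothness of $\Lambda_i$ then forces smoothness of $\pi_i(\Lambda_i)$, and in particular $\pi_i(\Lambda_i)$ is (locally) closed in $S_i$.

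It remains to check that $\pi_i(\Lambda_i)$ is Lagrangian in $(S_i,\omega_i)$. For isotropy: the surjective smooth morphism $\pi_i|_{\Lambda_i}\colon \Lambda_i \to \pi_i(\Lambda_i)$ satisfies
\[
(\pi_i|_{\Lambda_i})^* \bigl(\omega_i|_{\pi_i(\Lambda_i)}\bigr) \;=\; \pi_i^*(\omega_i)\bigm|_{\Lambda_i} \;=\; \omega|_{\Lambda_i} \;=\; 0,
\]
and smooth surjections are faithfully flat, so $\omega_i|_{\pi_i(\Lambda_i)} = 0$. For the dimension count, using that $C_i$ is coisotropic and $\Lambda_i$ is Lagrangian in $\resol$,
\[
\dim \pi_i(\Lambda_i) \;=\; \dim \Lambda_i - \dim (T_x C_i)^\perp \;=\; \tfrac{1}{2}\dim \resol - (\dim \resol - \dim C_i) \;=\; \tfrac{1}{2}\dim S_i,
\]
since $\dim S_i = 2\dim C_i - \dim \resol$. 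Hence $\pi_i(\Lambda_i)$ is Lagrangian.

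The main potential obstacle is the smoothness of the image $\pi_i(\Lambda_i)$: one needs $\pi_i$ to be a genuinely smooth morphism (not merely set-theoretically a quotient by the null foliation) so that the local-triviality argument applies. This should be guaranteed by the construction of the coisotropic reduction in \cite{BDMN}, where the fibers of $\pi_i$ are identified with the attracting leaves for the elliptic action; once that structural input is in hand, the rest is the linear-algebra observation about Lagrangians in coisotropic subspaces together with faithfully flat descent of vanishing of the symplectic form.
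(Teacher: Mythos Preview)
Your proof rests on the same key idea as the paper's: the linear-algebra fact that a Lagrangian contained in a coisotropic must contain the symplectic orthogonal of that coisotropic, so $(T_xC_i)^\perp = \ker d_x\pi_i \subset T_x\Lambda_i$ at every $x\in\Lambda_i$. The paper executes this in explicit Bia\l{}ynicki--Birula coordinates (trivializing $C_i \cong Y_i \times U \times W$ near a fixed point and identifying the $\pi_i$-fiber direction with $W$), while you argue more abstractly; both routes are fine, and your explicit isotropy-plus-dimension verification that $\pi_i(\Lambda_i)$ is Lagrangian is in fact cleaner than the paper's, which simply asserts it from the local product form.

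There is one imprecision in your saturation step. You justify ``any leaf meeting $\Lambda_i$ lies in $\Lambda_i$'' by invoking $\mathbb G_m$-invariance of $\Lambda_i$ and calling the $\pi_i$-fibers ``contracting cells for the Hamiltonian/null flow.'' But this lemma is stated for general elliptic symplectic varieties, where there is no Hamiltonian torus, and the elliptic $\mathbb G_m$ does \emph{not} preserve individual $\pi_i$-fibers: in the local model $C_i \cong Y_i \times U \times W$ both $U$ and $W$ carry strictly negative weights, so the action moves points across fibers. The correct (and simpler) argument is purely topological: since $\Lambda_i$ is smooth with $\ker d_x\pi_i \subset T_x\Lambda_i$ everywhere, $\Lambda_i \cap (\text{fiber})$ is open in the fiber; it is also closed since $\Lambda_i$ is closed in $C_i$; and the fibers are connected (affine spaces, by the description in \cite{BDMN}). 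This gives saturation without any appeal to $\mathbb G_m$-invariance.
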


\begin{proof}
We must check that $\Lambda_i = \pi_i^{-1}(\pi_i(\Lambda_i))$, and that $\pi_i(\Lambda_i)$ is Lagrangian in $S_i$. This can be checked locally on $S_i$, and indeed since it is shown in \cite[\S 2]{BDMN} that $\pi_i$ factors the projection map $p_i\colon C_i \to Y_i$, it is enough to check this locally on $Y_i$. We thus fix $y \in Y_i$. Now $p_i$ is an affine bundle over $Y_i$, and by \cite{BBFix} we may equivariantly trivialize it near $y$, so that we may assume $C_i = Y_i\times V$ where $V$ is a linear $\mathbb G_m$-representation with strictly negative weight. If $U<V$ denotes the subspace of vectors with weight $k$ where $-\ell \leq k <0$ and $W$ denotes the subspace of $V$ consisting of vectors with weights strictly less than $-\ell$, then it is shown in Lemma 2.5 of \cite{BDMN} that in this trivialization we have $S_i = Y_i\times U$, where $\pi_i$ is the projection along $W$. 

Now consider $\Lambda_i$: since $\Lambda_i$ is closed in $C_i$ we may assume $y \in \Lambda_i$. Then we may identify $T_yC_i = T_yY_i\times U\times W$, and since $\Lambda_i$ is Lagrangian we must have $W\subseteq T_y\Lambda_i$. But then it follows that (locally) $\pi_i(\Lambda_i)$ is $(Y\cap \Lambda_i)\times (U\cap \Lambda_i)$ which is Lagrangian in $S_i$, and $\Lambda_i = \pi_i^{-1}(\pi_i(\Lambda_i))$.
\end{proof}

\subsection{Better...Stronger...Faster}\label{properties}
Let $\resol$ be a smooth, connected, complex algebraic symplectic variety with symplectic form $\omega$.  We say that $\resol$ is a {\em conic (or conical) symplectic variety} if $\resol$ comes equipped with a $\Gm$-action satisfying:
\begin{enumerate}
\item[(C1)] the $\Gm$-action is elliptic. %s with positive weight on the symplectic form: if $m_t(x) = t\cdot x$ for $t\in\Gm$, then $m_{t\inv}^*\omega = t^k\omega$ for some $k>0$.  Moreover, this action is {\em elliptic}: for every $x\in \resol$, the limit $\ds\lim_{t\rightarrow \infty} m_t(x)$ exists in $\resol$. 
%\ff{There is a convention here about which we want to perhaps be explicit.  To be functorial, if $\cs$ acts on $\mathbb{A}^1$ with weight 1, i.e. $\lambda \cdot x = \lambda x$, then $\cs$ should act by ``pullback by the inverse'' on functions, so if $x$ is the coordinate function then $\lambda \cdot x  = x(\lambda^{-1}\on{point}) = \lambda^{-1}x$.  So acting with positive weight on the $x$'s and $dx$'s means negative weight on the points, which means we want limits as $\lambda\rightarrow\infty$ to exist.  Should we spell this out, or just use the ``wrong'' convention that the contracting action acts with weight $1$ on coordinate functions?}
%\item[(C2)] $\resol^{\cs}$ is finite.
\end{enumerate}
Now, suppose that $\resol$ comes equipped with a
 $\mathsf{T} = \Gm^2$ action, such that:
\begin{enumerate}
\item[(L1)] There is a choice of $\Gm\rightarrow \mathsf{T}$ that satisfies (C1) above.  
\item[(L2)] There is a different choice of $\mathsf{H} \cong \Gm\rightarrow \mathsf{T}$ that is Hamiltonian: i.e., it preserves the symplectic form.
\item[(L3)] Both actions have the same, finite, set $\cS$ of fixed points.
\end{enumerate}
\begin{defn}
We will call a variety with $\mathsf{T} = \Gm^2$-action satisfying (C1), (L1), (L2), (L3) above a {\em bionic symplectic variety}.
\end{defn}

\begin{remark}
When one has a bionic symplectic variety, there is a bit of freedom in the choice of the torus $\mathsf{T}$. More precisely, this definition really only requires the action of the product $\mathbb{G}_m \times \mathbb{G}_m$ of the tori giving the elliptic and Hamiltonian actions. This torus will be isogenous to $\mathsf{T}$, but may not be equal to it; for our purposes replacing $\mathsf{T}$ by this torus would be completely harmless. However, the natural torus actions occurring in the examples below often use a $\mathsf{T}$ which is not simply the product of the tori giving the elliptic and Hamiltonian actions, so it seemed sensible to allow this extra flexibility in the definition.
\end{remark}

We call the action in (L1) the {\em elliptic $\Gm$-action}, and the action in (L2) the {\em Hamiltonian $\Gm$-action}. Equipping $\resol$ with only the elliptic action it clearly becomes an elliptic symplectic variety, and hence has a decomposition $X = \bigsqcup_{i \in I} C_i$ into coisotropic pieces. However, since the elliptic action has isolated fixed points, each $C_i$ is in this case an affine space $\mathbb A^{d_i}$, and as is shown \cite[\S 2.6]{BDMN}, in this case the symplectic reductions $S_i$ are symplectic vector spaces equipped with an elliptic linear $\mathbb G_m$-action. Indeed, since the constructions of \cite{BDMN} can be carried out equivariantly, the $C_i$ are in fact $\mathsf T$-invariant and the $S_i$ are $\mathsf T$-representations.

%\subsection{Coisotropic Cell Decomposition}\label{sec:CIC}
%Suppose first that $\resol$ satisfies conditions (C1) and (C2) from Section \ref{properties}.  For each fixed point $p_i \in \resol^{\Gm}$, let 
%$\ds C_i = \{x\in \resol \,|\, \lim_{t\rightarrow\infty} t\cdot x = p_i\}$.  Recall that 
% a {\em coisotropic reduction} of $C_i$ consists of a smooth symplectic variety $S_i$ and a smooth %morphism
%  $\pi_i: C_i\rightarrow S_i$
%  such that, if $\iota: C_i\hookrightarrow \resol$ is the inclusion, then $\iota^*\omega = %\pi_i^*\omega_{S_i}$.  

%\begin{prop}[see \cite{BDMN}]
%\mbox{}
%\begin{enumerate}
%\item Each $C_i$ is a smooth, coisotropic subvariety of $\resol$ and $\Cs$-equivariantly isomorphic to an affine space $\mathbb{A}^{d_i}$. 
%\item There exist symplectic vector spaces $(S_i,\omega_i)$ with linear contracting $\Gm$-action and $\Cs$-equivariant coisotropic reductions $\pi_i : C_i \rightarrow S_i$.
%\end{enumerate}
%\end{prop}
%We refer to the decomposition $\resol = \coprod C_i$ as the {\em coisotropic cell decomposition} of $\resol$.  As in Lemma 2.4 of \cite{BDMN}, the collection of coisotropic cells is partially ordered by the relation defined by  $C_i\geq C_j$ if $\overline{C_j}\cap C_i\neq\emptyset$.  Moreover, for any choice of further refinement to a partial or total order, and any choice of index $i$, the subsets $K_{\geq i} = \coprod_{j\geq i} C_j$ and $K_{>i} = K_{\geq i} \smallsetminus C_i$ are closed coisotropic subsets of $\resol$.  

\subsection{The Lagrangian $\Lag$}\label{sec:Lag}
Suppose that $\resol$ is bionic symplectic. Let $\mathsf{H} = \mathsf{H}(s) \colon \Gm\rightarrow \on{Aut}(\resol)$ denote the Hamiltonian $\Gm$-action.  We define the {\em canonical Lagrangian skeleton} $\Lag$ of $\resol$ to be:
\begin{equation}\label{eq:skeleton}
\Lag = \Big\{ x\in\resol \,\Big|\, \lim_{s\rightarrow\infty} \mathsf{H}(s)\cdot x \,\,\text{exists in $\resol$}\Big\}.
\end{equation}
%The following is well known.
\begin{lemma}
The subset $\Lag$ is a Lagrangian subset of $\resol$.
\end{lemma}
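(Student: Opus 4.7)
The plan is to combine the Bialynicki--Birula decomposition for the Hamiltonian action $\mathsf{H}$ with the symplectic weight identities at its fixed points. Since the $\mathsf{H}$-fixed points $\cS$ are isolated, one has the disjoint cellular decomposition
\[
\Lag \;=\; \bigsqcup_{p \in \cS} \Lag_p, \qquad \Lag_p \;=\; \bigl\{ x \in \resol : \lim_{s \to \infty} \mathsf{H}(s)\cdot x = p \bigr\},
\]
with each $\Lag_p$ a smooth, locally closed, $\mathsf{H}$-stable subvariety that is $\mathsf{H}$-equivariantly isomorphic to the negative-weight subspace $T_p^-\resol \subset T_p\resol$ (for the $\mathsf{H}$-action).

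First I would verify the linear statement at each fixed point: $T_p^-\resol$ is a Lagrangian subspace of $(T_p\resol, \omega_p)$. Since $\mathsf{H}$ preserves $\omega$, the form $\omega_p$ has weight zero for the induced $\mathsf{H}$-action on $T_p\resol$, so the restriction $\omega_p|_{T_p^- \times T_p^-}$ has strictly negative weight and must vanish; hence $T_p^-$ is isotropic. The hypothesis that the fixed points are isolated means that $T_p\resol = T_p^+ \oplus T_p^-$ has no weight-zero component, and nondegeneracy of $\omega_p$ then forces it to pair $T_p^-$ perfectly with $T_p^+$. Consequently $\dim T_p^- = \tfrac{1}{2}\dim\resol$, and $T_p^-\resol$ is Lagrangian.

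Next I would propagate isotropy from $p$ to the rest of each $\Lag_p$ using $\mathsf{H}$-invariance of $\omega$: for $x \in \Lag_p$ and tangent vectors $v, w \in T_x\Lag_p$,
\[
\omega_x(v,w) \;=\; \omega_{\mathsf{H}(s)\cdot x}\bigl(d\mathsf{H}(s)v,\; d\mathsf{H}(s)w\bigr)
\]
for every $s \in \Gm$. Under the $\mathsf{H}$-equivariant identification $\Lag_p \cong T_p^-\resol$, the $\mathsf{H}$-weights on $T_x\Lag_p$ are all strictly negative, so $d\mathsf{H}(s)v$ and $d\mathsf{H}(s)w$ tend to zero in $T_p\resol$ as $s \to \infty$. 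Passing to the limit gives $\omega_x(v,w) = \omega_p(0,0) = 0$, and combined with the dimension count above this shows that each $\Lag_p$, and hence each of its smooth points in $\Lag$, is Lagrangian in $\resol$.

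Finally I would establish closedness of $\Lag$ in $\resol$. Each closure $\wb{\Lag_p}$ is $\mathsf{T}$-invariant because the elliptic $\Gm$ (call it $\mathsf{E}$) commutes with $\mathsf{H}$; so for any $y \in \wb{\Lag_p}$ the elliptic limit $\lim_{t \to \infty} \mathsf{E}(t)\cdot y$ exists by hypothesis and lies in $\cS \cap \wb{\Lag_p}$. Combining this properness of the elliptic contraction with the commuting Hamiltonian flow, one then argues that $\lim_{s \to \infty} \mathsf{H}(s)\cdot y$ must also exist, so $\wb{\Lag_p} \subseteq \Lag$. Thus $\Lag = \bigcup_p \wb{\Lag_p}$ is a finite union of closed subvarieties, hence itself closed. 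The main obstacle is precisely this last step: the isotropy and dimension calculations reduce to symplectic linear algebra at the fixed points together with Bialynicki--Birula equivariance, whereas ruling out ``runaway'' boundary points of $\wb{\Lag_p}$ that might fail to have a Hamiltonian limit genuinely requires using the compatibility of the two $\Gm$-actions on $\resol$.
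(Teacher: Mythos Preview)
The paper states this lemma without proof, so there is no argument to compare against directly. Your Steps~1--3 constitute a correct and standard proof that each Bia\l{}ynicki--Birula cell $\Lag_p$ for the Hamiltonian action is a smooth, locally closed Lagrangian subvariety: the weight argument at the fixed point is exactly right (invariance of $\omega$ forces $\omega_p$ to vanish on $T_p^-\times T_p^-$, and isolated fixed points give the dimension count), and the propagation of isotropy along the cell via $\mathsf{H}$-invariance is valid. One cosmetic suggestion for Step~3: rather than reasoning about limits of tangent vectors in $T\resol$, you can observe directly that $\omega|_{\Lag_p}$ is an $\mathsf{H}$-invariant algebraic $2$-form on an affine space with strictly negative $\mathsf{H}$-weights, and any such form is identically zero because the coefficients would need positive weight while all regular functions have non-positive weight.

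Your Step~4 is where the genuine gap sits, and you flag it yourself. The sentence ``combining this properness of the elliptic contraction with the commuting Hamiltonian flow, one then argues that $\lim_{s\to\infty}\mathsf{H}(s)\cdot y$ must also exist'' is not an argument: knowing that a boundary point $y\in\wb{\Lag_p}$ has an elliptic limit in $\cS$ does not by itself force its Hamiltonian orbit to converge. It is not entirely clear from the phrase ``Lagrangian subset'' whether the lemma is asserting closedness or only isotropy and half-dimension; the paper certainly needs closedness for the subsequent Corollary (that $\Lag$ is a Lagrangian skeleton), but may be tacitly deferring it. If you want to close the gap, one clean route in the symplectic-resolution case is to note that $\Lag=\mu^{-1}(\mathfrak{n})$ for $\mathfrak{n}$ the Hamiltonian attracting locus in the affine base (cf.\ the proof of Proposition~\ref{simple extension prop}); in the general bionic setting one should instead work with the elliptic cells $C_i$, on each of which $\mathsf{H}$ acts linearly so that $\Lag\cap C_i$ is a linear subspace, and then verify compatibility with the closure order among the $C_i$.
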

For each $i$, let $\Lag_i = C_i\cap\Lag$, where $C_i$ is a coisotropic cell as in Section \ref{sec:elliptic}.  
\begin{prop}
\mbox{}
\begin{enumerate}
\item Under the isomorphism $C_i \cong \mathbb{A}^{d_i}$ given above, each $\Lag_i$ is a vector subspace of the $\Gm$-representation $C_i$.  
\item Moreover, the projection $\pi_i(\Lag_i)\subset S_i$ is a 
$\mathsf{T}$-stable Lagrangian subspace of the symplectic vector space $S_i$, and $\Lag_i = \pi\inv\big(\pi(\Lag_i)\big)$.
\end{enumerate}
\end{prop}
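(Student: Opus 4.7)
The plan is to exploit the $\mathsf{T}$-equivariant linear structure on $C_i$ coming from the preceding Lemma, and then reduce (2) to that Lemma applied to $\Lambda = \Lag$.

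For (1), the (equivariant) Bialynicki--Birula trivialization used in the proof of the preceding lemma identifies $V := C_i$ with a linear $\mathsf{T}$-representation having unique $\mathsf{T}$-fixed point $y_i$ at the origin (since $\mathsf{T}$ commutes with the elliptic $\mathbb{G}_m$ and preserves $C_i$, the trivialization can be taken $\mathsf{T}$-equivariantly). Decompose $V = \bigoplus_\chi V_\chi$ into $\mathsf{T}$-weight spaces indexed by characters $\chi \in X^*(\mathsf{T})$. By condition (L3), each character $\chi$ with $V_\chi \neq 0$ satisfies $\chi|_{\mathsf{H}} \neq 0$. For $v = \sum_\chi v_\chi \in V$ we have $\mathsf{H}(s)\cdot v = \sum_\chi s^{\chi|_{\mathsf{H}}} v_\chi$, and this limit exists as $s\to\infty$ in $V$ precisely when $v_\chi = 0$ for all $\chi$ with $\chi|_{\mathsf{H}} > 0$. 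Moreover, if some $v_\chi \neq 0$ with $\chi|_{\mathsf{H}} > 0$, then the orbit escapes every compact subset of the affine space $V = C_i$; a valuative criterion argument in local coordinates near any candidate $\mathsf{T}$-fixed point $y_j \in \overline{C_i} \setminus C_i$ rules out convergence in $\resol$ (since $y_j \neq y_i$ would force the leading $\chi|_{\mathsf{H}} > 0$ component to produce a pole at $s = \infty$). Thus
\[
\Lag_i \;=\; \bigoplus_{\chi:\, \chi|_{\mathsf{H}} < 0} V_\chi,
\]
a $\mathsf{T}$-stable vector subspace of $V$, establishing (1).

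For (2), apply the preceding Lemma on $\mathbb{G}_m$-invariant Lagrangian subvarieties to $\Lambda = \Lag$: it is $\mathbb{G}_m$-invariant for the elliptic action by definition, Lagrangian by the previous Lemma, and $\Lag \cap C_i = \Lag_i$ is smooth since it is a linear subspace by (1). The Lemma immediately yields that $\pi_i(\Lag_i) \subset S_i$ is a smooth Lagrangian and $\Lag_i = \pi_i^{-1}(\pi_i(\Lag_i))$. Since the map $\pi_i \colon V = U \oplus W \twoheadrightarrow U = S_i$ is the $\mathsf{T}$-equivariant linear projection along $W$ appearing in the proof of the preceding lemma, the image $\pi_i(\Lag_i)$ is automatically a $\mathsf{T}$-stable linear subspace of $S_i$, hence a $\mathsf{T}$-stable Lagrangian subspace.

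The main obstacle is the second half of part (1), namely showing that an orbit escaping to infinity in the affine cell $V$ genuinely has no limit in the ambient $\resol$. Without this, $\Lag_i$ could in principle pick up nonlinear boundary contributions from orbits that limit to fixed points on $\partial C_i$. The argument uses that any such limit point is forced to be $\mathsf{T}$-fixed in $\overline{C_i}\setminus C_i$, and local coordinates at that limit together with the $\mathsf{T}$-weight decomposition of $V$ produce the required obstruction.
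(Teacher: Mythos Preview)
The paper does not supply a proof of this proposition, so there is nothing to compare against directly. Your overall strategy---identify $C_i$ with a linear $\mathsf{T}$-representation $V$, describe $\Lag_i$ via the $\mathsf{H}$-weight decomposition, and then invoke the preceding lemma for part (2)---is the natural one, and part (2) is handled correctly once (1) is in place.

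The genuine gap is exactly where you flag it: the ``hard direction'' of (1). Your claimed identification $\Lag_i = \bigoplus_{\chi|_{\mathsf{H}}<0} V_\chi$ requires showing that if $v \in C_i$ has a nonzero component with $\chi|_{\mathsf{H}} > 0$, then $\lim_{s\to\infty}\mathsf{H}(s)\cdot v$ does not exist in $\resol$. Your ``valuative criterion in local coordinates near $y_j$'' does not work as written: the $\mathsf{T}$-weight coordinates near a boundary fixed point $y_j \in \overline{C_i}\setminus C_i$ are entirely unrelated to the linear coordinates on $V = C_i$, so the positive-weight component of $v$ in $V$-coordinates has no direct meaning at $y_j$, and there is no pole to point to. Concretely, the $\mathsf{H}$-orbit of $v$ lies in $C_i$ (which is $\mathsf{H}$-stable), and nothing you have said prevents it from converging in $\overline{C_i}$ to some other $\mathsf{T}$-fixed point $y_j$. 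Indeed, consider a one-dimensional cell $C_i \cong \mathbb{A}^1$ lying inside an exceptional $\mathbb{P}^1$ (as in the paper's Kleinian example): if the $\mathsf{H}$-weight on $C_i$ happens to be positive, then every nonzero $v \in C_i$ has $\mathsf{H}(s)\cdot v$ escaping to the compactifying point $y_j \in \overline{C_i}\setminus C_i$, and the limit \emph{does} exist in $\resol$. In that situation $\Lag_i = C_i$ while $V^{\mathsf{H}<0} = \{0\}$, so your explicit formula is wrong even though the proposition (that $\Lag_i$ is a linear subspace) remains true.

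What is actually needed for (1) is an argument that $\Lag_i$ is a linear subspace without presupposing which one. One route is to show that $\Lag$ is closed in $\resol$ (so $\Lag_i$ is closed in $C_i$), that $\Lag_i$ is $\mathsf{T}$-stable and irreducible, and then that a $\mathsf{T}$-stable irreducible closed subvariety of $V$ containing the origin and with the correct dimension $\tfrac{1}{2}\dim\resol$ is forced to be linear; alternatively, analyze the intersections $C_i \cap \Lag^{(j)}$ of elliptic and Hamiltonian Bia\l ynicki--Birula cells more carefully. Either way, the key step you have sketched is not yet a proof.
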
  

%We refer to each $\Lag_i$ as a {\em Lagrangian cell}.  We give the collection of labels $i$ of Lagrangian cells $\Lag_i$ the same partial ordering as for the coisotropic cells $C_i$.  For any fixed choice of refinement of the partial order to a partial or total order, we define $\Lag_{\geq i} = \coprod_{j\geq i}\Lag_j$ and $\Lag_{>i} = \Lag_{\geq i}\smallsetminus \Lag_i$.  
%\begin{corollary}
%\mbox{}
%\begin{enumerate}
%\item The closure $\overline{\Lag}_i$ of each Lagrangian cell $\Lag_i$ is an irreducible component of $\Lag$.
%\item Each $\Lag_{\geq i}$ and $\Lag_{>i}$ is a closed subset of $\Lag$.  
%\end{enumerate}
%\end{corollary}
 
The next corollary is immediate:

\begin{corollary}
\mbox{}
The variety $\Lag$ is a Lagrangian skeleton of $\resol$.
\end{corollary}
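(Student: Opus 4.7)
The plan is to verify the three axioms for $\Lag$ to be a Lagrangian skeleton: that $\Lag$ is a closed Lagrangian subvariety, that $\Lag$ is invariant under the elliptic $\Gm$-action, and that for each $i$, the intersection $\Lag_i = \Lag \cap C_i$ is a nonempty, smooth, closed subvariety of $C_i$. The ``Lagrangian'' part of the first axiom is the preceding Lemma, and the third axiom follows directly from part (1) of the preceding Proposition: it identifies $\Lag_i$ with a vector subspace of $C_i \cong \mathbb{A}^{d_i}$, which is automatically nonempty (containing the fixed point in $Y_i$), smooth, and Zariski-closed in $C_i$. For elliptic $\Gm$-invariance, I would observe that the elliptic and Hamiltonian actions commute, since they both factor through the torus $\mathsf{T} = \Gm^2$. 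Thus $\mathsf{H}(s) \cdot (t \cdot x) = t \cdot \mathsf{H}(s) \cdot x$ for any elliptic $t$, and the Hamiltonian limit at $s \to \infty$ exists for $t \cdot x$ whenever it exists for $x$.

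The remaining task, and the main obstacle, is to show that $\Lag$ is closed in $\resol$. I would argue by induction on the partial order on the coisotropic cells, exploiting that $K_{>i} = \bigsqcup_{j > i} C_j$ is closed in $\resol$. Assuming inductively that $\Lag \cap K_{>i}$ is closed, it suffices to verify that the closure of $\Lag_i$ in $\resol$ is contained in $\Lag$. Any point of this closure not lying in $\Lag_i$ itself must lie in some $C_j$ with $j > i$, and the task reduces to checking that such a boundary point admits a Hamiltonian limit. I would carry this out in the $\mathsf{T}$-equivariant trivialization near a fixed point of $Y_i$ provided by Bia\l ynicki--Birula, relying on part (2) of the Proposition to express $\Lag_i$ as $\pi_i^{-1}(\pi_i(\Lag_i))$ for a $\mathsf{T}$-stable Lagrangian subspace of $S_i$, and then translating the Hamiltonian limit condition into conditions on weight components that remain well-behaved across the boundary.

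I expect this closedness step to be the real work. Although $\Lag_i$ is Zariski-closed inside its own cell, the cells themselves are only locally closed in $\resol$, so one genuinely has to control how the linear weight conditions cutting out $\Lag_i$ propagate across cell boundaries. The available tools---$\mathsf{T}$-equivariance, the finite common fixed-point set $\mathcal{S}$ guaranteed by (L3), and the explicit weight-space description of each cell $C_i$ coming from Section \ref{properties}---should suffice, but the bookkeeping of Hamiltonian weights under the symplectic reduction $\pi_i$ and under passage to an adjacent cell will require careful organization.
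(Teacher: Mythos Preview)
The paper gives no proof of this corollary at all: it is declared ``immediate'' from the preceding Lemma and Proposition. Your decomposition into the three axioms of a Lagrangian skeleton, and your verification of $\Gm$-invariance (via commutativity of the two factors of $\mathsf{T}$) and of nonemptiness, smoothness, and closedness of $\Lag_i$ inside $C_i$ (via part~(1) of the Proposition), are exactly what the paper has in mind and are correct.

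Where you go beyond the paper is in singling out closedness of $\Lag$ in $\resol$ as a genuine obligation. You are right that this is not automatic: attracting loci for $\Gm$-actions on noncomplete varieties need not be closed in general, and the paper does not supply an argument. The most plausible reading is that the authors intend closedness to be part of the unproved preceding Lemma (``$\Lag$ is a Lagrangian subset of $\resol$''), or regard it as known for attracting loci of Hamiltonian torus actions in this setting. Your proposed cell-by-cell induction is a reasonable strategy, though as you note it is only an outline. A cleaner route, available whenever the affinization map $\mu\colon\resol\to\resol^{\mathrm{aff}}$ is proper (as in all the paper's examples, and as the paper itself exploits later in the proof of Proposition~\ref{simple extension prop}), is to observe that $\Lag=\mu^{-1}(\mathfrak{n})$ for the affine attracting locus $\mathfrak{n}\subset\resol^{\mathrm{aff}}$, which is manifestly closed. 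In the stated generality of an arbitrary bionic symplectic variety the paper simply does not address the point.
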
 

\begin{exmp} \label{exmp:Kleinian}
An illustrative example is provided by the minimal resolution of the Kleinian singularity of type $A$; this example is discussed further in \ref{sec:Hilbcyclic} and \ref{exmp:typeAholo} below. 

Let $Y_n$ denote the Kleinian singularity of type $A_n$ and let $X_n$ be its minimal resolution, a smooth symplectic surface. (The symplectic form is induced from volume form on $\C^2$). There is a natural two-torus $\mathsf T$ acting on $X_n$, and it has $n$ fixed points $\{p_1,p_2,\ldots,p_n\}$ all lying in the exceptional fibre of the resolution: indeed if the exceptional locus is $D_1\cup D_2\ldots \cup D_{n-1}$ where each $D_i$ is a projective line then $D_i \cap D_{i+1}=\{p_{i+1}\}$ for $1\leq i \leq n-2$ and the other fixed points $p_1$ and $p_n$ lie on $D_1\backslash\{p_1\}$ and $D_{n-1}\backslash\{p_{n-1}\}$ respectively. 

Picking a suitable generic one-parameter subgroup, one obtains an elliptic $\mathbb G_m$-action, which has $n$ coisotropic affine cells: the fixed point $p_n$ attracts a two-dimensional affine space $C_n=\mathbb A^2$, while each of the others has attracting locus isomorphic to $\mathbb A^1$: for $p_i$ where $i>1$ this is the affine line $C_i= D_i \backslash\{p_{i-1}\}$, while for $p_0$ the affine line $C_0$ lies outside the exceptional locus, and is tangent to $C_1$ at $p_0$. 

The Hamiltonian $\mathbb G_m$-action $\mathsf{H} \colon \mathbb G_m \to \mathsf T$ is induced from a Hamiltonian action on $\C^2$, and the locus of points $\Lag$ which have a limit under it is the union of the exceptional locus and an affine line tangent to $C_{n-1}$ at $p_n$. On the open cell, one of course already has a symplectic variety, so it is its own reduction, while on each of the other cells, the reduction is just a point. 
\end{exmp}

We list below standard examples of bionic symplectic varieties. 

\subsection{Cotangent Bundles of Spherical Varieties}

Let $G$ be a connected reductive group and $B$ a Borel subgroup. 

\begin{lemma}
If $X$ is a complete $B$-spherical variety, then $T^*X$ has a $\T$-action making it bionic symplectic.
\end{lemma}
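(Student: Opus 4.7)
The plan is to exhibit $\mathsf{T} := T \times \mathbb{G}_m$, where $T \subseteq B$ is a maximal torus (acting on $T^*X$ via the cotangent lift of its action on $X$) and the $\mathbb{G}_m$-factor is the fiber-scaling action $t \cdot (x, \xi) = (x, t\xi)$. Since the $T$-action is the cotangent lift of a group action on $X$, it preserves the canonical one-form $\theta$ on $T^*X$ and therefore $\omega = d\theta$, while fiber scaling satisfies $t^*\omega = t\omega$. This slightly enlarges $\Gm^2$, but, as remarked after the definition of bionic symplectic variety, it is only the product of the two one-parameter subgroups below that is actually needed.

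For the Hamiltonian subgroup in (L2), I would choose a generic cocharacter $\phi \colon \mathbb{G}_m \to T$ with $X^{\phi(\mathbb{G}_m)} = X^T$; since $T$ preserves $\omega$, so does $\phi(\mathbb{G}_m)$. For the elliptic subgroup in (L1), combine $\phi$ with the fiber scaling via $\lambda(t) := (\phi(t), t^c) \in \mathsf{T}$ for a sufficiently large positive integer $c$. Then $\lambda(t)^*\omega = t^c \omega$ has positive weight $c$. For the limit condition (C1)(ii): completeness of $X$ and genericity of $\phi$ give, by Bialynicki--Birula, that $\lim \phi(t)\cdot x \in X^T$ exists for every $x \in X$; at each $x_0 \in X^T$ the cotangent fiber $T^*_{x_0}X$ decomposes as a $T$-representation with all weights nonzero (see below), so choosing $c$ to dominate the absolute values of all these weights makes $\lambda(t)$ contract each cotangent fiber. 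The two contractions then combine (via an equivariant local model around each $x_0$) to give a well-defined limit at every point of $T^*X$.

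It remains to verify (L3). The fixed points of $\phi$ on $T^*X$ are pairs $(x, \xi)$ with $x \in X^T$ and $\xi \in (T^*_x X)^T = 0$, and the same constraint holds for $\lambda$. Both fixed sets are therefore identified, via the zero section, with $X^T$. The main obstacle in the argument is precisely the finiteness of $X^T$, together with the equivalent statement that at each $x_0 \in X^T$ the tangent (and hence cotangent) $T$-representation has no zero weight; this is exactly where the $B$-sphericality of $X$ is used, through the classical fact that a complete $B$-spherical variety has finitely many $B$-orbits, whence (by Borel's fixed point theorem applied to closed $B$-orbits, and the known identification of $T$-fixed points with closed $B$-orbits on a spherical variety) the set $X^T$ is finite, and smoothness of $X$ at an isolated $T$-fixed point forces the absence of zero weights. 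A minor sign adjustment, replacing $\phi$ by $\phi^{-1}$ or modifying the sign of $c$, aligns the construction with the paper's $\lim_{t\to\infty}$ convention in (C1)(ii).
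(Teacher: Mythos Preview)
Your overall strategy matches the paper's: take the Hamiltonian $\mathbb{G}_m$ to be a generic one-parameter subgroup $\gamma$ of the maximal torus $T\subset B$, and obtain the elliptic $\mathbb{G}_m$ by twisting $\gamma$ by a sufficiently high power of the fiber scaling. You correctly observe that the common fixed locus is the zero section over $X^{\gamma(\mathbb{G}_m)}$, so everything reduces to showing this set is finite.

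However, your justification for $|X^T|<\infty$ has a genuine gap. The asserted ``identification of $T$-fixed points with closed $B$-orbits on a spherical variety'' is false. On $X=G/B$, for instance, the unique closed $B$-orbit is the base point $\{eB\}$, whereas $X^T=\{wB:w\in W\}$ has $|W|$ elements; most $T$-fixed points lie in non-closed $B$-orbits. Borel's fixed point theorem only tells you that each closed $B$-orbit is a point, which gives an injection from closed $B$-orbits into $X^T$, not a bijection.

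The paper fills this gap with an orbit-by-orbit count. Writing each $B$-orbit as $B/K_s$, one shows that a generic $\gamma:\mathbb{G}_m\to T$ has \emph{no} fixed point on $B/K_s$ when $\operatorname{rank}(K_s)<\operatorname{rank}(B)$ (the image of $K_s$ in $B/U\cong T$ is then a proper subtorus, which a generic $\gamma$ avoids), and has \emph{exactly one} fixed point when $\operatorname{rank}(K_s)=\operatorname{rank}(B)$ (then $B\cdot eK_s=U\cdot eK_s$ is a single $U$-orbit, hence an affine space on which a generic $\gamma$ acts with nonzero weights and contracts to the origin). Your argument is salvageable along these lines; alternatively one can argue directly that each $B$-orbit contains at most one $T$-fixed point using $N_B(T)=T$.
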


\begin{proof}
We may write $X$ as the disjoint union $X = \bigsqcup_{s\in \mathcal S}\mathcal O_s$ of its $B$-orbits, where $|\mathcal S|<\infty$. To show that $T^*X$ has the structure of a bionic symplectic variety, it suffices to exhibit a $1$-parameter subgroup $\gamma\colon \mathbb G_m \to B$ such that $\gamma(\mathbb G_m)$ has finitely many fixed points on $\mathcal O_s$ for each $s \in \mathcal S$. Indeed assuming such a $\gamma$ exists, it induces a Hamiltonian action on $T^*X$, and by modifying this action by a sufficiently high power of the scaling action on the fibres of $T^*X$ we may ensure that the resulting action on $T^*X$ is fibrewise convergent.

Fix a maximal torus $T$ of $B$ and, for each $s \in \mathcal S$, let $K_s$ be the stabilizer of a point in $\mathcal O_s$ chosen so that $T_s = K_s\cap T$ is a maximal torus of $K_s$ (this is possible since the maximal tori in $B$ are all conjugate). If $\gamma\colon \mathbb G_m\to T$ then 
\[
bK_s \in (B/K_s)^{\gamma(\mathbb G_m)} \iff \forall t\in \mathbb G_m,  \gamma(t).bK_s = bK_s, \iff b^{-1}\gamma b \colon \mathbb G_m \to K_s.
\]
Let $r\colon B\to B/U$ be  the quotient of $B$ by its unipotent radical, and let $\mathcal S^{\text{max}} = \{s \in \mathcal S: \text{rank}(K_s)= \text{rank}(B)\}$. If $s \notin \mathcal S^{\text{max}}$, then $r(K_s)$ is a proper subtorus of $B/U$ and, since $B/U$ is abelian, for any $b \in B$ and $\gamma\colon \mathbb G_m \to T$ generic,
\[
r\circ (b^{-1}\gamma b)(\mathbb G_m)\cap r(K_s)=r\circ\gamma(\mathbb G_m)\cap r(K_s) =\{1\},
\]
and hence any such $\gamma$ will have no fixed points on $\mathcal O_s$. On the other hand, if $s \in \mathcal S^{\text{max}}$, then $B.eK_s= U.T.eK_s = U.eK_s$, so that $\mathcal O_s$ is a $U$-orbit, and hence is an affine space. But $T$ acts with positive weights on $U$, and hence a generic $1$-parameter subgroup $\gamma$ contracts $U$ to $e$ and hence $B/K_s$ to $eK_s$. Thus for $\gamma\colon\mathbb G_m \to T$  suitably generic we see that $\gamma$ will have exactly $|\mathcal S^{\text{max}}|<\infty$ fixed points on $X$. 
\end{proof}

\begin{corollary}
If $X$ is either a partial flag manifold or a toric variety then $T^*X$ is bionic symplectic.
%If $X$ is either (a) toric, or (b) a partial flag manifold $G/P$, then $T^*X$ has a $\T$-action as above.
\end{corollary}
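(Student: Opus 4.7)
The plan is to reduce both cases directly to the preceding lemma by exhibiting each of these varieties as a complete $B$-spherical variety for a suitable choice of reductive group $G$ and Borel $B$, and then invoking that lemma to conclude that $T^*X$ is bionic symplectic.

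First, suppose $X = G/P$ is a partial flag manifold, where $P$ is a parabolic subgroup of a connected reductive group $G$. Completeness of $G/P$ is standard. To verify $B$-sphericity for a Borel subgroup $B \subseteq P$, I would invoke the Bruhat decomposition: the double coset space $B\backslash G/P$ is in bijection with $W_P \backslash W / W_P$ (where $W$ and $W_P$ are the Weyl groups of $G$ and the Levi of $P$), and in particular is finite. Hence $G/P$ is a disjoint union of finitely many $B$-orbits (the Schubert cells), which is precisely the spherical condition.

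Next, suppose $X$ is a complete toric variety for a torus $T$. Take $G = T$ itself, which is reductive; since $T$ is abelian, it is its own Borel subgroup, so $B = T$. The $B$-orbits on $X$ are then just the $T$-orbits, and by the standard theory of toric varieties these are in bijection with the cones of the defining fan, hence finite in number (as the fan is finite when $X$ is complete). Thus $X$ is complete $B$-spherical.

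In both cases the preceding lemma applies and yields a $\T$-action on $T^*X$ making it bionic symplectic. There is no real obstacle here: the content is all in the previous lemma, and the corollary is essentially a matter of citing the Bruhat decomposition and the basic structure theory of toric varieties to check sphericity.
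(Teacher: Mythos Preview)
Your proposal is correct and is exactly the intended argument: the paper states the corollary without proof, as an immediate application of the preceding lemma to these two standard families of complete spherical varieties. One small slip: the Bruhat decomposition gives $B\backslash G/P \cong W/W_P$, not $W_P\backslash W/W_P$, but of course finiteness is all you need.
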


\subsection{Hilbert Schemes of Points on Minimal Resolutions of Type $A$ Kleinian Singularities}\label{sec:Hilbcyclic}
Another example of a bionic symplectic variety is the $\mathsf{T}$-action on $(\C^2)^{[n]}$ induced from the action on $\C^2$. More generally, the action on the Hilbert scheme of points on the minimal resolution of the type $A$ Kleinian singularity is bionic. It is explained in \cite{Gordon2} how to identify the Hamiltonian $\mathbb G_m$-action and show that it has finitely many fixed points. 

The algebra of invariant (with respect to the elliptic $\mathbb G_m$-action) global sections of a DQ-algebra on this Hilbert scheme is isomorphic to the spherical subalgebra of a rational Cherednik algebra associated to the wreath product group $\mathfrak{S}_n \wr \mu_{\ell}$; see \cite{Gordon06} 
%\cite{LosevIso}, section 6.2,
for a proof of this. When localization holds, geometric category $\mathcal{O}$ is equivalent to the usual (spherical) category $\mathcal{O}$ introduced in \cite{GGOR}. 

%\footnote{If one considers the Hilbert scheme of points on the minimal resolution of non-type-A Kleinian singularities then the same ``obvious'' Hamiltonian action coming from the fact that it is constructed from a doubled quiver actually has a dense open set consisting of fixed points (probably the action is completely trivial). So it's not clear there what a good Hamiltonian action should be.} 

\subsection{Hypertoric varieties and Nakajima quiver varieties}

Hypertoric varieties associated to unimodular matrices $A$ and generic stability parameters \cite{HS} provide a rich source of symplectic resolutions of conic symplectic singularities. In many cases, one can choose a Hamiltonian torus action such that the resulting $2$-torus $\mathsf{T}^2$ has finitely many fixed points. See \cite{HypertoriccatO} for details. 

Similarly, Nakajima quiver varieties often give rise to bionic symplectic varieties. Indeed, the example of Section~\ref{sec:Hilbcyclic} can be realised as a Nakajima quiver variety associated to the cyclic quiver. Another large class of examples come from framed finite ADE type quivers. Using standard notation, if $\mathbf{v}$ is the dimension vector and $\mathbf{w}$ the framing vector then we require each weight $\mathbf{w}_i \varpi_i$ to be a minisucle weight for the corresponding simple Lie algebra. In this case, the smooth quiver variety $M(\mathbf{v},\mathbf{w})$ is bionic; see \cite[Theorem~6.2]{Sympdualitysurvey}.

\subsection{Slodowy Slices}

If $\mathfrak{g}$ is a reductive Lie algebra and $e \in \mathfrak{g}$ is a nilpotent element, Slodowy \cite{Sl} has defined a natural transverse slice $S_e$ at $e$ through its orbit $\mathcal O_e$. Let $\mathcal S_e = S_e \cap \mathcal N$ be the intersection of the slice with the nilpotent cone of $\mathfrak{g}$. Pulling back the Springer resolution to this slice one obtains 
a resolution of singularities of $\pi_e \colon \widetilde{\mathcal S}_e \to \mathcal S_e$, which is compatible with the natural symplectic forms on the smooth locus of $\mathcal S_e$ and $\widetilde{\mathcal S}_e$ so that $\pi_e$ is a symplectic resolution.

There is a natural $\mathbb G_m$-action on $\mathcal S_e$ and $\widetilde{\mathcal S}_e$ (for which $\pi_e$ is equivariant) which contracts $\mathcal S_e$ to the element $e$: Indeed if $\mathfrak{s}$ is an $\mathfrak{sl}_2$-triple containing $e$, then there is a one-parameter subgroup $\lambda$ of $G$ parametrizing the maximal torus of the associated $\text{SL}_2$ in $G$. This acts with weight $2$ on $e$, hence if we define $\rho(t)(x) = t^{-2}\lambda(t)(x)$ ($x \in \mathfrak{g})$, then $\rho$ fixes $e$. A tangent space computation shows that it contracts the slice $S_e$ to $e$, and clearly it lifts to an action on $\widetilde{\mathcal S}_e$. Since the action is compatible with the map $\pi_e$ and $\pi^{-1}(e)$ is proper, every $x\in \widetilde{\mathcal S}_e$ has a limit under the action of $\rho(\mathbb G_m)$. Moreover, the action of $\lambda$ on $T^*(\mathcal B)$ is Hamiltonian, and so the symplectic form $\omega$ on $\widetilde{\mathcal S}_e$ is only altered by scaling of the fibre direction. It follows that $\rho(\mathbb G_m)$ has weight $2$ on $\omega$, so that $\rho$ is an elliptic action.
There is a natural $\mathbb G_m$-action on $\mathcal S_e$ and $\widetilde{\mathcal S}_e$ (for which $\pi_e$ is equivariant) which contracts $\mathcal S_e$ to the element $e$: Indeed if $\mathfrak{s}$ is an $\mathfrak{sl}_2$-triple containing $e$, then there is a one-parameter subgroup $\lambda$ of $G$ parametrizing the maximal torus of the associated $\text{SL}_2$ in $G$. This acts with weight $2$ on $e$, hence if we define $\rho(t)(x) = t^{-2}\lambda(t)(x)$ ($x \in \mathfrak{g})$, then $\rho$ fixes $e$. A tangent space computation shows that it contracts the slice $S_e$ to $e$, and clearly it lifts to an action on $\widetilde{\mathcal S}_e$. Since the action is compatible with the map $\pi_e$ and $\pi^{-1}(e)$ is proper, every $x\in \widetilde{\mathcal S}_e$ has a limit under the action of $\rho(\mathbb G_m)$. Moreover, the action of $\lambda$ on $T^*(\mathcal B)$ is Hamiltonian, and so the symplectic form $\omega$ on $\widetilde{\mathcal S}_e$ is only altered by scaling of the fibre direction. It follows that $\rho(\mathbb G_m)$ has weight $2$ on $\omega$, so that $\rho$ is an elliptic action.

Not all $\widetilde{\mathcal S}_e$ appear to be bionic symplectic varieties however, but a large class of them are, as the following shows: Recall that we say a nilpotent $e \in \mathfrak{g}$ is of \textit{standard Levi type} if there is a Levi subalgebra $\mathfrak{g}_0$ containing $e$ in which $e$ is regular. 

\begin{lemma}
Suppose that $e$ is of standard Levi type. Then $\widetilde{\mathcal S}_e$ is a bionic symplectic variety.
\end{lemma}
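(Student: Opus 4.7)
The elliptic action $\rho$ is already constructed from the $\mathfrak{sl}_2$-triple, so the task reduces to producing a commuting Hamiltonian $\mathbb{G}_m$-action $\mathsf{H}$ on $\widetilde{\mathcal{S}}_e$ whose fixed-point set coincides with the (finite) fixed-point set of $\rho$. The standard-Levi hypothesis supplies the needed symmetry torus, namely the connected center of $L_0$.

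\textbf{Construction of the Hamiltonian torus.} Since $e$ is regular in $L_0$, after conjugation one may take the $\mathfrak{sl}_2$-triple $(e,h,f)$ inside $\mathfrak{l}_0 = \mathrm{Lie}(L_0)$, so that $\lambda: \mathbb{G}_m \to G$ factors through $L_0$. Let $T_0 = Z(L_0)^\circ$. Being central in $L_0$, $T_0$ centralizes the entire triple and hence commutes with $\rho$; its adjoint action on $\mathfrak{g}$ fixes $e$, preserves $\ker\mathrm{ad}(f)$, and therefore acts on $\mathcal{S}_e$ and lifts through the Springer resolution to $\widetilde{\mathcal{S}}_e$. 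This lifted action is the restriction of the natural $G$-action on $T^*\mathcal{B}$, which preserves the canonical symplectic form, so it is Hamiltonian. Take $\mathsf{H}: \mathbb{G}_m \to T_0$ to be a sufficiently generic one-parameter subgroup.

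\textbf{Finiteness of the fixed loci.} The weights of $\mathrm{ad}(h)$ on $\ker\mathrm{ad}(f)$ are all non-positive, so $\rho(t) = t^{-2}\mathrm{Ad}(\lambda(t))$ acts with strictly negative weights on the normal directions to $\{e\}$ inside $\mathcal{S}_e$; hence $\{e\}$ is the unique $\rho$-fixed point in $\mathcal{S}_e$, and $(\widetilde{\mathcal{S}}_e)^\rho \subseteq \pi_e^{-1}(e) = \mathcal{B}_e$. The $\rho$-fixed points are thus pairs $(e, \mathfrak{b})$ with $\mathfrak{b}$ a $\lambda$-stable Borel of $G$ containing $e$ in its nilradical; a Bialynicki-Birula analysis of the $\lambda$-action on $\mathcal{B}$, together with the fact that the regular element $e$ is contained in a unique Borel of $L_0$, reduces this to a finite set. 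An entirely parallel argument applies to $T_0$: a $T_0$-fixed point $(x, \mathfrak{b})$ of $\widetilde{\mathcal{S}}_e$ forces $x \in \mathcal{S}_e \cap \mathfrak{g}^{T_0} \cap \mathcal{N} = \mathcal{S}_e^{L_0} \cap \mathcal{N}_{L_0}$, which equals $\{e\}$ by regularity of $e$ in $L_0$, and the condition on $\mathfrak{b}$ is again finite.

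\textbf{Coincidence of fixed sets.} Since $T_0$ commutes with $\rho$, it preserves the finite set $(\widetilde{\mathcal{S}}_e)^\rho$ and, being connected, acts trivially on it; so $(\widetilde{\mathcal{S}}_e)^\rho \subseteq (\widetilde{\mathcal{S}}_e)^{T_0}$. Symmetrically, $\rho(\mathbb{G}_m)$ is connected and acts on the finite set $(\widetilde{\mathcal{S}}_e)^{T_0}$, giving the reverse inclusion, hence equality. For generic $\mathsf{H}$ in $T_0$ one has $(\widetilde{\mathcal{S}}_e)^{\mathsf{H}} = (\widetilde{\mathcal{S}}_e)^{T_0} = (\widetilde{\mathcal{S}}_e)^\rho$, and letting $\mathsf{T}$ be the 2-torus generated by $\rho(\mathbb{G}_m)$ and $\mathsf{H}(\mathbb{G}_m)$ verifies axioms (L1)--(L3) for $\widetilde{\mathcal{S}}_e$. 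The decisive step is the finiteness assertion: this is precisely where the standard-Levi hypothesis is essential, entering through the identification of $T_0$-invariants in $\mathcal{S}_e$ with the Slodowy slice of $e$ inside $L_0$, which collapses to $\{e\}$ by regularity.
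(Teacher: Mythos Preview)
Your construction of the Hamiltonian torus $T_0 = Z(L_0)^\circ$ and the argument that $(\widetilde{\mathcal S}_e)^{T_0}$ is finite are correct and match the paper's approach exactly (the paper's cocharacter $\lambda$ with $\mathfrak g^{\lambda}=\mathfrak l_0$ is just a generic element of $X_*(T_0)$, and your identification $\mathcal S_e\cap\mathfrak l_0 = \mathcal S_e^{L_0}$ gives the same conclusion as the paper's orbit-closure argument). The genuine gap is your assertion that the elliptic action $\rho$ already has finitely many fixed points. Those fixed points are the $\lambda$-fixed Borels in the Springer fibre $\mathcal B_e$, where now $\lambda$ is the $\mathfrak{sl}_2$-cocharacter; the centraliser of \emph{this} $\lambda$ is the Levi determined by the $h$-grading, which is in general \emph{not} $L_0$, so regularity of $e$ in $L_0$ says nothing here. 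Concretely, for $e$ of Jordan type $(2,2)$ in $\mathfrak{sl}_4$ with $h=\mathrm{diag}(1,-1,1,-1)$, the $h$-stable flags $F_\bullet$ satisfying $e(F_i)\subseteq F_{i-1}$ include an entire $\mathbb P^1\times\mathbb P^1$ (take $F_2=\langle e_1,e_3\rangle$ and let $F_1,F_3$ vary), so $(\widetilde{\mathcal S}_e)^{\rho}$ is two-dimensional while $(\widetilde{\mathcal S}_e)^{T_0}$ consists of $|W/W_0|=6$ points. Your symmetric connectedness argument for the inclusion $(\widetilde{\mathcal S}_e)^{\rho}\subseteq(\widetilde{\mathcal S}_e)^{T_0}$ therefore collapses, and (L3) is not verified.

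The paper sidesteps this by only computing the Hamiltonian fixed locus and declaring that this suffices, tacitly using the freedom in (L1) to choose a different elliptic cocharacter inside the two-torus $\mathsf T$. To repair your argument explicitly, replace $\rho$ by $\rho'=\rho^{M}\cdot\mu$ for a generic $\mu\in X_*(T_0)$ and $M\gg 0$: the $\rho$-weights on $T_e\mathcal S_e\cong\ker\mathrm{ad}(f)$ are all $\le -2$ while the $\mu$-weights are bounded, so $\rho'$ still contracts $\mathcal S_e$ to $e$ and is elliptic on $\widetilde{\mathcal S}_e$ by properness of $\pi_e$; for generic $M$ its fixed set on $\mathcal B_e$ equals the $\mathsf T$-fixed set $=(\widetilde{\mathcal S}_e)^{T_0}$, and now your connectedness argument gives (L3).
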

\begin{proof}
By the previous paragraph, it suffices to construct a Hamiltonian $\mathbb G_m$ action $\rho\colon \mathbb G_m \to \text{Aut}(\widetilde{\mathcal S}_e)$ which commutes with the Slodowy $\mathbb G_m$-action and has finitely many fixed points. 

Let $\mathfrak{g}_0$ be a Levi subalgebra in which $e$ is regular. Choose a $\mathbb G_m$ action $\lambda\colon \mathbb G_m \to G$ (where $G$ is the adjoint group associated to $\mathfrak{g}$) such that $\mathfrak{g}_0$ is precisely the fixed point locus for the action of $\lambda(\mathbb G_m)$ on $\mathfrak{g}$. Since we may pick an $\mathfrak{sl}_2$-triple $\mathfrak{s}$ for $e$ in $\mathfrak{g}_0$, it is clear that the action $\lambda$ commutes with the elliptic action $\rho$ associated to $\mathfrak{s}$ by the preceding paragraph. 

Now since $e \in \mathfrak{g}_0$ is regular, there is a unique Borel subalgebra $\mathfrak{b}_0 \subseteq \mathfrak{g}_0$ containing $e$. But if $\mathcal B$ and $\mathcal B_0$ denote the flag varieties of $\mathfrak{g}$ and $\mathfrak{g}_0$ respectively, then $\mathcal B^{\lambda(\mathbb G_m)}$ is isomorphic to $|W/W_0|$ copies of $\mathcal{B}_0$, where on each component the map $\mathfrak{b}\mapsto \mathfrak{b}\cap \mathfrak{g}_0$ yields the isomorphism.  Moreover, if $x \in \mathfrak{g}_0\cap \mathcal S_e$, then since $e$ is regular in $\mathfrak{g}_0$, the orbit of $x$ under $G_0$ (the Levi subgroup associated to $\mathfrak{g}_0$) lies in the closure of the orbit of $e$, and hence the closure of the $G$-orbit of $e$ intersects (and hence contains) the orbit of $x$. Since $S_e$ is a transverse slice through the $G$-orbit of $e$, this is possible if and only if $x=e$.

Now suppose that $(x,\mathfrak b) \in \widetilde{\mathcal S}_e$ is fixed by $\lambda(\mathbb G_m)$, then $x \in \mathfrak{g}_0\cap \mathcal S_e = \{e\}$ and $\mathfrak{b}\cap \mathfrak{g}_0 = \mathfrak{b}_0$, hence there are exactly $|W/W_0|$ fixed points for the action of $\lambda(\mathbb G_m)$. It follows that $\widetilde{\mathcal S}_e$ is bionic as required.
\end{proof}

\begin{remark}
The condition that $e$ be a nilpotent element of regular Levi type may also be necessary: if the one-parameter subgroup $\lambda$ arises from a one-parameter subgroup of $T\times \mathbb G_m$, where $T$ is a maximal torus of $G$ and $\mathbb G_m$ acts by homotheties on $\mathfrak{g}$, then  the action will be Hamiltonian if and only if the one-parameter subgroup has image in $T$. But then the condition that $\lambda(\mathbb G_m)$ has finitely many fixed points ensures that $\pi_e^{-1}(e)$ is a finite set, which implies that $e$ is regular in the Levi subalgebra given by $\lambda$. Thus if $\text{Aut}(\widetilde{\mathcal S}_e)$ has maximal torus $T\times \mathbb G_m$ (that is, if there are no other one-parameter subgroups up to conjugacy) we obtain the converse to the lemma. 
\end{remark}

The algebra of invariant (with respect to the elliptic $\mathbb G_m$-action) global sections of a DQ-algebra on the resolution of a Slodowy slice is a finite $W$-algebra. 

%\subsection{Quiver Varieties}
%\ff{Ben's slides in Luminy say something about this?}

%\begin{problem}
%Which quiver varieties are bionic symplectic?
%\end{problem}
%{\color{blue} Anyone have any thoughts on this? I'm not sure I have any insight at the moment. There might be some things to look at in the work of Lusztig, who thought about $\mathbb G_m$-actions in the context of his stuff on bases in K-theory for quiver varieties.}

%\subsection{Further Examples}
%\begin{question}
%Can we give more classes of examples?  Cotangent bundles of Mori dream spaces?  
%\end{question}

\section{$\cW$-Modules Supported on $\Lag$}
\subsection{Background on $\cW$-Algebras and Quasicoherent $\cW$-Modules}
Throughout, we follow the exposition and conventions of \cite{BDMN}.  

A sheaf of $\C[\![\h]\!]$-algebras $\sA$ on $\resol$ is said to be a \textit{deformation-quantization} algebra if it is $\h$-flat and $\h$-adically complete, and comes with an isomorphism of Poisson algebras $\sA / \hbar \sA \cong \mc{O}_X$. If $\sA$ is a deformation-quantization algebra on $\resol$, then  $\cW := \C(\!(\h)\!) \o_{\C[\![\h]\!]} \sA$ is a sheaf of $\C(\!(\h)\!)$-algebras on $\resol$ called the {\em $\cW$-algebra associated to $\sA$}. If $M$ is any $\C[\hbar]$-module (or sheaf) then $M_{\hbar}$ will denote $\C[\hbar^{\pm 1}] \o_{\C[\hbar]} M = M[{\hbar}^{-1}]$. In particular, $\cW = \mc{A}_{\hbar}$. 

We assume throughout that our algebra $\cW$ carries an action of $\mathsf{T}$. Since the symplectic form $\omega$ has weight $\ell > 0$ with respect to the elliptic $\Gm$-action and is invariant with respect to $\mathsf{H}$, the element $\hbar$ has weight $\ell$ with respect to $\Gm$ and is invariant with respect to $\mathsf{H}$. Just as in \cite[Section~2.3]{KR}, from this point on we replace $\cW$ by $\C(\!( \h^{1/\ell} ) \!) \o_{\C(\!(\h)\!)} \cW$, $\sA$ by $\C[\![ \h^{1/\ell} ] \!] \o_{\C[\![\h]\!]} \sA$ and $\hbar$ by $\hbar^{1/\ell}$. Therefore, $\hbar$ now has \textbf{weight one} with respect to $\Gm$. After making this replacement, the Poisson bracket on $\sA / \hbar \sA$ is given by
\[
\{ \overline{a}, \overline{b} \} = \frac{1}{\hbar^{\ell}} [a,b] \mod \hbar \sA.
\]
because the bracket $[a,b]$ is now a multiple of $\hbar^{\ell}$ for all $a,b \in \sA$. 

\begin{assumption}
$\Gm$-equivariant $\cW$-modules are always defined with respect to the elliptic $\Gm$-action; since \textbf{all modules} we consider are $\Gm$-equivariant, we suppress explicit mention of it throughout the paper.    
\end{assumption}

We will occasionally refer explicitly to $\mathsf{T}$-equivariant modules, however: this is an additional structure beyond what we uniformly require of modules.

Coherent modules over $\sA$ are defined analogously to coherent sheaves in commutative algebraic geometry except that they must be $\Gm$-equivariant.  A {\em lattice} for a $\cW$-module $\ms{M}$ is a coherent $\Gm$-equivariant $\sA$-submodule $\ms{M}(0)$ such that $\cW\cdot \ms{M}(0) = \ms{M}(0)_{\hbar} = \ms{M}$. A $\cW$-module $\ms{M}$  is {\em good} if it admits a lattice.  The (abelian) category of good $\cW$-modules is denoted $\Good{\cW}$. The ind-category of good $\cW$-modules is written $\WQcoh$ and called, somewhat abusively, the category of {\em quasicoherent $\cW$-modules}.  Its basic properties are described in \cite{BDMN}.

Given an open immersion $j:U\rightarrow \resol$ from a $\mathbb{G}_m$-invariant open set $U$, there is an exact, full, continuous, and essentially surjective restriction functor 
\[
j^*: \Good{\cW} \rightarrow \Good{\cW_U}.
\]
It induces a continuous functor $j^*:\WQcoh\rightarrow \Qcoh(\cW_U)$.  The latter functor admits a continuous right adjoint $j_*: \Qcoh(\cW_U)\rightarrow\WQcoh$, which does not usually preserve coherence.   Moreover, $j^*j_* \ms{M} = \ms{M}$ for all $\ms{M}\in \Qcoh(\cW_U)$; $j^*$ is a quotient functor and $j_*$ is a section. See Section~5.1 of \cite{BDMN} for details.  

Since the category $\Good{\cW}$ is essentially small, $\QCoh{\cW}$ is a Grothendieck category \cite[Theorem~8.6.5(vi)]{KS}. In particular, it has enough injectives \cite[Theorem~9.6.2]{KS} and small inductive limits. By \cite[Corollary~5.9]{BDMN}:

\begin{thm}
    The (unbounded) derived category $D(\QCoh{\cW})$ admits small inductive limits, is compactly generated and 
    \[
    D(\QCoh{\cW})^c = D^b(\Good{\cW}) = \mathsf{perf} (\cW). 
    \]
\end{thm}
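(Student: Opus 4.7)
The plan is to deduce the theorem from general facts about derived categories of Grothendieck categories, specialized to $\QCoh{\cW}$, exploiting that it is the ind-completion of the essentially small abelian category $\Good{\cW}$ and is Grothendieck with enough injectives, as established in the sentence preceding the theorem.

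First, I would check that $D(\QCoh{\cW})$ admits small coproducts (equivalently, all small inductive limits, since it is a triangulated category with coproducts). Filtered colimits are exact in a Grothendieck category, so termwise coproducts of complexes preserve quasi-isomorphisms; equivalently, by Serp\'e's theorem, every complex admits a K-injective resolution, and coproducts of K-injective complexes remain K-injective in this setting.

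Second, to establish compact generation, I would show that every $\mc{M} \in \Good{\cW}$, viewed as a complex concentrated in degree zero, is compact in $D(\QCoh{\cW})$. The crucial input is that $\mc{M}$ is a finitely presented object of $\QCoh{\cW}$: this is immediate from the ind-completion definition, so $\Hom_{\QCoh{\cW}}(\mc{M},-)$ commutes with filtered colimits. Combining finite presentation with exactness of filtered colimits, and a standard argument using K-injective resolutions, yields that $\Hom_{D(\QCoh{\cW})}(\mc{M},-)$ commutes with small coproducts. Closing under finite cones, shifts, and retracts, every object of $D^b(\Good{\cW})$ is then compact; since $\Good{\cW}$ generates $\QCoh{\cW}$, it generates $D(\QCoh{\cW})$, giving compact generation.

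For the equality $D(\QCoh{\cW})^c = D^b(\Good{\cW})$, the inclusion $\supseteq$ is the previous paragraph. For the reverse inclusion, I would invoke Neeman's theorem: in a compactly generated triangulated category, the subcategory of compact objects is the smallest thick subcategory containing any chosen set of compact generators; here that thick subcategory is $D^b(\Good{\cW})$. The equality with $\mathsf{perf}(\cW)$ is essentially the definition of perfect complexes as the compact objects of the derived category. The main obstacle I expect is the second step --- specifically, transporting finite presentation of $\mc{M}$ through K-injective resolutions to yield compactness in $D(\QCoh{\cW})$. This requires that coproducts of injective-like resolutions behave well, which is exactly where the Grothendieck structure of $\QCoh{\cW}$ (exactness of filtered colimits together with enough injectives) is indispensable. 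All remaining steps are formal consequences of triangulated-category generalities once the compact generators have been identified.
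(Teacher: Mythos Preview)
Your outline has the right architecture, but the second step contains a genuine gap. You assert that finite presentation of $\mc{M}\in\Good{\cW}$ inside $\QCoh{\cW}$, together with exactness of filtered colimits and the existence of K-injective resolutions, forces $\Hom_{D(\QCoh{\cW})}(\mc{M},-)$ to commute with arbitrary coproducts. This implication is false in general: for $R=k[x]/(x^2)$ and $\mc{M}=k$, the module $k$ is finitely presented in $\LMod{R}$, filtered colimits are exact, and K-injective resolutions exist, yet $k$ is \emph{not} compact in $D(R)$ because it has infinite projective dimension and hence is not perfect. Finite presentation at the abelian level guarantees that each individual $\Ext^i(\mc{M},-)$ commutes with filtered colimits, but this does not control $\R\Hom(\mc{M},-)$ against unbounded complexes, which is what compactness requires.

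What you are missing is exactly the content of the equality $D^b(\Good{\cW})=\mathsf{perf}(\cW)$, which you dismiss as ``essentially the definition.'' It is not: the paper defines $\mathsf{perf}(\cW)$ as objects \emph{locally} quasi-isomorphic to bounded complexes of finitely generated projective $\cW$-modules, not as the compact objects. That every good $\cW$-module lies in $\mathsf{perf}(\cW)$ is a regularity statement---it uses that $\cW$ has finite homological dimension, which ultimately comes from $\sA/\hbar\sA\cong\mc{O}_{\resol}$ being the structure sheaf of a smooth variety. Only once you know each $\mc{M}$ is perfect can you run the standard argument (a bounded complex of locally free modules is compact, and compactness is stable under cones, shifts, and retracts). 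The paper itself does not prove any of this in situ; it simply cites \cite[Corollary~5.9]{BDMN}, where the finite homological dimension of $\cW$ is established and the identification of compact objects with perfect complexes is carried out.
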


Here $\mathsf{perf} (\cW)$ denotes the full, triangulated, subcategory of $D(\QCoh{\cW})$ consisting of all perfect objects; that is, objects locally isomorphic to a bounded complex of finitely generated projective modules. In fact, just as in the commutative case, projective $\cW$-modules are locally free so one can alternatively assume that locally the objects in $\mathsf{perf} (\cW)$ are bounded complexes of free $\cW$-modules of finite rank.   

Let $C \subset \mf{X}$ be a closed coisotropic cell, as in Section~\ref{sec:elliptic}, and $U$ its complement. We form the usual diagram
\begin{equation}\label{eq:poencloseddiagram}
   \begin{tikzcd}
C \ar[r,hook,"i"] & \mf{X} & \ar[l,hook',"j"'] U. 
\end{tikzcd} 
\end{equation}
As in \cite[Proposition~5.1(1)]{BDMN}, for $\ms{M}$ a good $\cW$-module we define $\Gamma_C(\ms{M})$ to be the largest submodule of $\ms{M}$ whose support is contained in $C$. The main result of  \cite{BDMN} is an equivalence $\Ham \colon \Good{\cW}_C \iso \Good{\cW_S}$ between good $\cW$-modules supported on $C$ and good $\cW_S$-modules, where $S$ is the coisotropic reduction of $C$. This defines a functor $i^! = \Ham \circ \Gamma_C \colon \Good{\cW} \to \Good{\cW_S}$ and extends to an exact functor $\R i^! \colon D(\QCoh{\cW_{\resol}}) \to D(\QCoh{\cW_S})$. The following is implicit in \cite{BDMN}.

\begin{prop}
    There exist adjoint pairs $(j^*,\R j_*)$ and $(i_*,\R i^!)$
    \[
\begin{tikzcd}
    D(\QCoh{\cW_S}) \ar[r,bend right=20,"i_*"'] & D(\QCoh{\cW_{\resol}}) \ar[l,bend right=20,"\R i^!"']  \ar[r,bend right=20,"j^*"'] & D(\QCoh{\cW_U}) \ar[l,bend right=20,"\R j_*"'] 
\end{tikzcd}
\]
forming a triangle 
\begin{equation}\label{eq:openclosedtriangle}
    i_* \circ \R i^! \to \Id \to \R j_* j^* \stackrel{[1]}{\longrightarrow}.
\end{equation}
\end{prop}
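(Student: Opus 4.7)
The plan is to lift to derived categories the abelian-level structure already established in \cite{BDMN}. All three categories $\QCoh{\cW_{\resol}}$, $\QCoh{\cW_U}$ and $\QCoh{\cW_S}$ are Grothendieck, hence admit enough injectives. Since $j^{*}$ is exact by \cite{BDMN}, and $i_{*} = \mathrm{incl}_C \circ \Ham^{-1}$ is exact (both the inclusion of $\Good{\cW}_C$ and the equivalence $\Ham$ are exact), the two left adjoints descend trivially. The left-exact right adjoints $j_{*}$ and $\Gamma_C$ admit right-derived functors $\R j_{*}$ and $\R \Gamma_C$ computed on injective resolutions, so one may set $\R i^{!} := \Ham \circ \R \Gamma_C$. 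The derived adjunctions $(j^{*}, \R j_{*})$ and $(i_{*}, \R i^{!})$ then follow by the standard principle that, for an adjoint pair with exact left adjoint, replacing the target with an injective resolution computes derived $\Hom$ on both sides; concretely one checks $\Hom_{D(\QCoh{\cW_U})}(j^{*}\ms{M}, \ms{N}) \cong \Hom_{D(\QCoh{\cW_{\resol}})}(\ms{M}, \R j_{*}\ms{N})$ on injective $\ms{N}$, and similarly for $i_*$.

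To produce the triangle \eqref{eq:openclosedtriangle}, take the unit $\eta \colon \Id \to \R j_{*} j^{*}$ of the derived adjunction and let $F(\ms{M})$ be the shifted cone $\mathrm{cone}(\eta_{\ms{M}})[-1]$, so that $F(\ms{M}) \to \ms{M} \to \R j_{*} j^{*} \ms{M} \stackrel{[1]}{\to}$ is distinguished. Applying the exact functor $j^{*}$ and using that $j^{*} \circ \R j_{*} = \Id$ on derived categories (a consequence of $j^{*} j_{*} = \Id$ in the abelian setting together with exactness of $j^{*}$), one concludes $j^{*} F(\ms{M}) = 0$, i.e. the cohomology of $F(\ms{M})$ is supported on $C$. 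It remains to identify $F$ with $i_{*} \R i^{!}$, i.e. with $\R \Gamma_{C}$ after absorbing the equivalence $\Ham$. Choosing an injective resolution $\ms{M} \to \ms{I}^{\bullet}$, this identification reduces to verifying that the termwise left-exact sequence
\[
0 \to \Gamma_C \ms{I}^{k} \to \ms{I}^{k} \to j_{*} j^{*} \ms{I}^{k} \to 0
\]
is short exact for every injective $\ms{I}^{k}$; granting this, the induced short exact sequence of complexes realizes precisely the desired triangle, and at the same time shows that $j^{*} \ms{I}^{\bullet}$ is a $j_{*}$-acyclic resolution of $j^{*}\ms{M}$, so that $j_{*} j^{*} \ms{I}^{\bullet}$ genuinely computes $\R j_{*} j^{*} \ms{M}$.

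The main obstacle is therefore the surjectivity of $\ms{I} \to j_{*} j^{*} \ms{I}$ for injective $\ms{I}$, equivalently the claim that $j^{*}$ preserves injective objects. My plan is to deduce this by establishing that $\QCoh{\cW_\resol}_C$ is a \emph{localizing} subcategory of $\QCoh{\cW_\resol}$, i.e. a Serre subcategory closed under arbitrary coproducts. Closure under subobjects, quotients and extensions is immediate from the support-theoretic definition; closure under coproducts reduces to showing that the support condition is preserved under the filtered colimits used to compute coproducts in the ind-category $\QCoh{\cW_\resol}$, which follows from the compatibility of $\Gamma_C$ with the filtered colimits of lattices underlying good modules. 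Once $\QCoh{\cW_\resol}_C$ is known to be localizing, standard torsion-theoretic arguments (as in Stenstr\"om, \emph{Rings of Quotients}, Ch.~VI) produce, for any injective $\ms{I}$, a direct-sum decomposition $\ms{I} \cong \Gamma_C \ms{I} \oplus (\ms{I}/\Gamma_C \ms{I})$ with both summands injective and with the complementary summand isomorphic to $j_{*} j^{*} \ms{I}$; this supplies the required surjectivity and completes the identification of $F$ with $i_{*} \R i^{!}$.
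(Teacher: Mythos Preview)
Your derivation of the adjoint pairs $(j^*,\R j_*)$ and $(i_*,\R i^!)$ from the abelian-level adjunctions is correct, and you have rightly isolated the crux of the triangle as the surjectivity of $\ms{I} \to j_*j^*\ms{I}$ for injective $\ms{I}$. The gap is in your justification of that surjectivity: the decomposition $\ms{I} \cong \Gamma_C\ms{I} \oplus (\ms{I}/\Gamma_C\ms{I})$ with both summands injective does \emph{not} follow from $\QCoh{\cW}_C$ being merely localizing. That splitting is the defining property of a \emph{stable} hereditary torsion theory (one closed under injective envelopes), which is strictly stronger. For a general localizing subcategory one only has $\ms{I}/\Gamma_C\ms{I} \hookrightarrow j_*j^*\ms{I}$ with cokernel in $\QCoh{\cW}_C$; showing this cokernel vanishes amounts to $\Ext^1_{\cW}(T,\ms{I}/\Gamma_C\ms{I})=0$ for all $T$ supported on $C$, which via the long exact sequence becomes $\Ext^2_{\cW}(T,\Gamma_C\ms{I})=0$, and that is not automatic. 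Stenstr\"om's Chapter~VI does not give you this for free. Stability may well hold here, but you have not argued it, and without it your termwise short exact sequence---and hence your identification of $j_*j^*\ms{I}^\bullet$ with $\R j_* j^*\ms{M}$---is unjustified.

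The paper bypasses this issue. It cites \cite{BDMN} directly for the existence of both adjoint pairs (Corollary~5.9, Proposition~5.1, Lemma~5.3 there) and, crucially, for the identification of $D(\QCoh{\cW_U})$ with the Verdier quotient $D(\QCoh{\cW})/D_C(\QCoh{\cW})$. Once that derived-level quotient is in hand, the triangle is a formal consequence of Bousfield localization (the paper points to \cite[Ex.~10.15]{KS}), with no need to split injectives or to check $j_*$-acyclicity of $j^*\ms{I}$. If you want to keep your more concrete route, you must either prove stability of the support torsion theory in $\QCoh{\cW}$ directly, or else invoke the derived quotient identification and finish as the paper does.
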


\begin{proof}
The fact that the pair $(j^*,\R j_*)$ exists is \cite[Corollary~5.9]{BDMN}, and that the pair $(i_*,\R i^!)$ exists is Proposition~5.1 and Lemma~5.3 of \cite{BDMN}. Finally, it is shown in (5.1) of \cite{BDMN} that $j^*$ identifies $D(\QCoh{\cW_U})$ with $D(\QCoh{\cW}) / D_C(\QCoh{\cW})$. Therefore, the other claim is standard, see for instance \cite[Ex. 10.15]{KS}.
\end{proof}

\subsection{Holonomic Objects and Objects Supported on $\Lag$}

By Gabber's theorem, the support of any good $\cW$-module has dimension at least $(1/2)\dim \resol$. A good $\cW$-module is said to be {\em holonomic} if the dimension of its support is exactly $(1/2)\dim \resol$. The category of holonomic $\cW$-modules is denoted $\Hol{\cW}$. The theory of characteristic cycles implies the following. 

\begin{lem}
Let $\ms{M}$ be a holonomic $\cW$-module. Then $\ms{M}$ has finite length. 
\end{lem}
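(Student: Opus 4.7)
The plan is to assign to each holonomic $\cW$-module a characteristic cycle, an additive invariant taking values in finite nonnegative integer combinations of Lagrangian components, whose total multiplicity will bound the length.

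First I would choose a good $\sA$-lattice $\ms{M}(0) \subset \ms{M}$ and form the coherent $\cO_{\resol}$-module $\overline{\ms{M}} := \ms{M}(0)/\hbar\ms{M}(0)$. Its support $\mathrm{Ch}(\ms{M})$ is independent of the choice of lattice, and by the holonomicity assumption is pure of dimension $(1/2)\dim\resol$ (by Gabber's involutivity theorem it is automatically coisotropic, hence Lagrangian). For each irreducible component $Z \subset \mathrm{Ch}(\ms{M})$, let $m_Z(\ms{M})$ be the length of the stalk of $\overline{\ms{M}}$ at the generic point of $Z$; this is finite because $\overline{\ms{M}}$ is coherent and $Z$ is top-dimensional in its support. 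Set
\[
CC(\ms{M}) \ := \ \sum_Z m_Z(\ms{M})\,[Z],
\]
a finite sum with nonnegative integer coefficients.

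Next I would verify that $CC$ is additive on short exact sequences of holonomic modules. Given
\[
0 \to \ms{M}' \to \ms{M} \to \ms{M}'' \to 0,
\]
I would pick a good lattice $\ms{M}(0) \subset \ms{M}$ and set $\ms{M}'(0) := \ms{M}(0) \cap \ms{M}'$ and $\ms{M}''(0) := \ms{M}(0)/\ms{M}'(0)$. Both are good lattices (in particular $\Gm$-equivariant, coherent, and generating after inverting $\hbar$), and $\ms{M}''(0) \hookrightarrow \ms{M}''$ is $\hbar$-torsion free because $\ms{M}''$ is a $\cW$-module. Consequently, reducing mod $\hbar$ produces a short exact sequence of coherent $\cO_{\resol}$-modules, and taking lengths at each generic point yields $CC(\ms{M}) = CC(\ms{M}') + CC(\ms{M}'')$. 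Independence of the lattice itself follows by comparing two good lattices via their intersection and sum and using the same stability-under-$\hbar$ argument.

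Finally, since submodules and quotients of a holonomic module are again holonomic (dimension of support is bounded below by Gabber and above by $(1/2)\dim\resol$), any strictly increasing chain $0 = \ms{N}_0 \subsetneq \ms{N}_1 \subsetneq \cdots \subsetneq \ms{N}_n = \ms{M}$ consists of holonomic modules, and each quotient $\ms{N}_i/\ms{N}_{i-1}$ is a nonzero holonomic module contributing at least $1$ to the total multiplicity of its characteristic cycle. Additivity then forces $n \leq \sum_Z m_Z(\ms{M}) < \infty$, proving that $\ms{M}$ has finite length. The main technical point is the well-definedness and additivity of $CC$ in the $\cW$-module setting; this goes through just as in the classical $\D$-module case, the key inputs being Gabber's theorem for $\cW$-modules and the $\hbar$-torsion-freeness of any $\cW$-module viewed through a lattice.
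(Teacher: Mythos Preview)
Your argument is correct and is precisely the content of the reference the paper makes: the paper does not spell out a proof but simply states that ``the theory of characteristic cycles implies'' the lemma, and what you have written is exactly that theory carried out in the $\cW$-module setting. The only point worth double-checking is that $\ms{M}'(0):=\ms{M}(0)\cap\ms{M}'$ is again a \emph{lattice} for $\ms{M}'$ (coherence uses local Noetherianity of $\sA$, and generation uses that any $m'\in\ms{M}'$ satisfies $\hbar^n m'\in\ms{M}(0)\cap\ms{M}'$ for some $n$); once that is in hand, your additivity and length-bound arguments go through verbatim.
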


%A good $\cW$-module $\ms{M}$ is said to be \textit{regular holonomic} if there exists a lattice $\ms{M}(0)$ of $\ms{M}$ such that the support of $\ms{M}(0) / \h \ms{M}(0)$ is reduced. The category of regular holonomic $\cW$-modules is denoted $\Reghol{\cW}$. 

%Let $\dim  \resol = 2 m$. The following analogue of the Gabber-Kashiwara equidimensionality theorem can be found in \cite{BDMN}.

%\begin{thm}
%Let $\ms{M}$ be a good $\cW$-module. Then, there exists a unique filtration 
%$$
%0 = D_{m-1}(\ms{M}) \subset D_m(\ms{M}) \subset D_{m+1}(\ms{M}) \subset %\cdots \subset D_{2m} (\ms{M}) = \ms{M}
%$$
%such that $\Supp \, D_i(\ms{M}) / D_{i-1}(\ms{M})$ has support of pure dimension $i$. 
%\end{thm}

%An important consequence of the existence of the filtration $D_{\idot}(M)$ is the following extension result. 

%\begin{corollary}
%Let $U$ be a $\mathsf{T}$-stable open subset of $\resol$ and $\ms{M}$ a holonomic, $\mathsf{T}$-equivariant $\cW_U$-module. Then there exists a holonomic, $\mathsf{T}$-equivariant $\cW$-module $\ms{M}'$ such that $\ms{M}'|_U \simeq \ms{M}$. Moreover, if $\ms{M}$ is simple then there exists a unique simple extension $\ms{M}'$. 
%\end{corollary}

%An object of $\WQcoh$ is {\em ind-holonomic} if it can be written as an inductive system of objects of $\Hol{\cW}$.  

This paper is concerned mainly with those $\mathsf{T}$-equivariant holonomic objects which are supported on $\Lag$.  Consider an open subset $U = \resol\smallsetminus K_{>i}$ for a closed union $K_{>i}$ of coisotropic cells.  Then $C_i$ is closed in $U$ and $\Lag_i$ is closed in $C_i$.  Recall that $\overline{\Lag}_i = \pi_i(\Lag_i)$ where $\pi_i:C_i\rightarrow S_i$ is the coisotropic reduction. In \cite{BDMN} it is shown that if $\mathcal W_\resol$ is a $\mathbb G_m$-equivariant quantization of $\resol$ then each $S_i$ has a natural $\mathbb G_m$-equivariant quantization $\mathcal W_{S_i}$ and there is a reduction functor $\mathbb H_i$ giving an equivalence 
\[
\Ham_i \colon \Good{\cW_U}_{C_i} \iso \Good{\cW_{S_i}}
\]
between the category of good modules supported on $C_i$ and the category of good modules for $\mathcal W_{S_i}$. The analogous statement holds in the dg context as well. 

\begin{prop}\label{prop:uniquesimple}
\mbox{}
\begin{enumerate}
\item
There is a simple $\mathsf{T}$-equivariant holonomic object $\delta_{\Lag_i}$ of $\Good{\cW_U}$ supported on $\Lag_i$. 
\item  Such an object $\delta_{\Lag_i}$ is unique up to twisting by a character of the Hamiltonian $\Gm$-action.  
%\item $\delta_{\Lag_i}$ is regular holonomic, and 
\item every $\mathsf{T}$-equivariant holonomic object of $\Good{\cW_U}$ supported on $\Lag_i$ is isomorphic to a finite direct sum of twists (for the Hamiltonian $\Gm$-action) of copies of $\delta_{\Lag_i}$.
\end{enumerate}
\end{prop}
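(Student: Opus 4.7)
The plan is to apply Hamiltonian reduction to reduce to a linear problem on a symplectic vector space, where the claims follow from a Kashiwara-type classification of $\cW$-modules supported on a smooth Lagrangian together with an Ext-vanishing on affine space.

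By the $\T$-equivariant equivalence $\Ham_i \colon \Good{\cW_U}_{C_i} \iso \Good{\cW_{S_i}}$, which preserves simplicity, holonomicity, and supports, and which sends modules supported on $\Lag_i$ to modules supported on $\overline{\Lag}_i := \pi_i(\Lag_i)$, the statements (1)--(3) reduce to their analogues for $\T$-equivariant holonomic $\cW_{S_i}$-modules supported on $\overline{\Lag}_i$. In the reduced setting, $S_i$ is a symplectic $\T$-representation whose elliptic $\Gm$ acts with strictly negative weights, and $\overline{\Lag}_i \subseteq S_i$ is a $\T$-stable Lagrangian subspace. Since $\T$ is reductive, I choose a $\T$-stable Lagrangian complement and corresponding Darboux coordinates $(x_1,\dots,x_n,p_1,\dots,p_n)$ of definite $\T$-weight with $\overline{\Lag}_i = \{p_1 = \cdots = p_n = 0\}$, and define
\[
\delta_{\overline{\Lag}_i} := \cW_{S_i}\big/\cW_{S_i}(p_1,\dots,p_n),
\]
equipped with its tautological $\T$-equivariant structure. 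A standard Weyl-algebra computation (reducing modulo $\hbar$ and using that $\mathbb{C}[x_1,\dots,x_n]$ is a simple module over the Weyl algebra) shows that $\delta_{\overline{\Lag}_i}$ is simple holonomic with full support on $\overline{\Lag}_i$, and one takes $\delta_{\Lag_i} := \Ham_i^{-1}(\delta_{\overline{\Lag}_i})$ to establish (1).

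For (2), let $M$ be any simple $\T$-equivariant holonomic $\cW_{S_i}$-module supported on $\overline{\Lag}_i$. A Kashiwara-type theorem for $\cW$-modules on a smooth Lagrangian identifies such modules with coherent $\cO_{\overline{\Lag}_i}$-modules via the functor $M \mapsto M^{\{p\}} := \bigcap_j \ker(p_j \colon M \to M)$; simplicity of $M$ forces $M^{\{p\}}$ to be a $\T$-equivariant rank-one locally free $\cO_{\overline{\Lag}_i}$-module, hence of the form $\cO_{\overline{\Lag}_i}\otimes\chi$ for a character $\chi$ of $\T$, so $M \cong \delta_{\overline{\Lag}_i}^\chi$. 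Two twists $\delta_{\overline{\Lag}_i}^\chi$ and $\delta_{\overline{\Lag}_i}^{\chi'}$ are $\T$-equivariantly isomorphic if and only if $\chi'\chi^{-1}$ is the $\T$-weight of a unit in $\End_{\cW_{S_i}}(\delta_{\overline{\Lag}_i}) \cong \mathbb{C}(\!(\hbar)\!)$; since $\hbar$ has elliptic weight one and Hamiltonian weight zero, this image is exactly the elliptic character lattice. Consequently isomorphism classes are parametrized by characters of $\T$ modulo elliptic characters, i.e.\ by characters of the Hamiltonian $\Gm$, proving (2).

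For (3), let $M$ be any $\T$-equivariant holonomic object supported on $\overline{\Lag}_i$. Holonomicity gives finite length, and every Jordan--Hölder factor is a simple holonomic module whose support has dimension $\dim \overline{\Lag}_i$ inside the irreducible $\overline{\Lag}_i$, hence equals $\overline{\Lag}_i$; by (2), every factor is a Hamiltonian twist of $\delta_{\overline{\Lag}_i}$. The filtration splits because, via the Kashiwara equivalence,
\[
\Ext^1\bigl(\delta_{\overline{\Lag}_i}^\chi,\delta_{\overline{\Lag}_i}^{\chi'}\bigr) \;\cong\; \Ext^1_{\cO_{\overline{\Lag}_i}}\bigl(\cO, \cO\otimes\chi'\chi^{-1}\bigr) \;=\; H^1\bigl(\overline{\Lag}_i, \cO\otimes\chi'\chi^{-1}\bigr) \;=\; 0,
\]
the last equality holding because $\overline{\Lag}_i \cong \mathbb{A}^n$ is affine. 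Hence $M$ decomposes as a direct sum of twists of $\delta_{\overline{\Lag}_i}$, yielding (3). The main expected obstacle is the Kashiwara-type equivalence used in (2) and (3): one must carefully verify that $M \mapsto M^{\{p\}}$ realizes an equivalence of abelian categories in the $\T$-equivariant $\cW$-module setting, and track precisely how elliptic-versus-Hamiltonian twists interact with the $\hbar$-weight-one normalization of the lattice in order to recover the sharp ``Hamiltonian-only'' form of the uniqueness claim.
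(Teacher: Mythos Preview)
Your reduction via $\Ham_i$ to the symplectic vector space $S_i$ is exactly what the paper does, and the overall strategy is sound. There is, however, a slip in the target of your Kashiwara-type functor: $M^{\{p\}}=\bigcap_j\ker(p_j)$ is \emph{not} naturally an $\cO_{\overline\Lag_i}$-module. Since $[x_i,p_j]$ is a nonzero multiple of $\hbar^\ell\delta_{ij}$, for $m\in M^{\{p\}}$ one has $p_j(x_im)\neq 0$, so the $x_i$ do not preserve $M^{\{p\}}$. The correct target (after taking elliptic invariants) is simply the category of finite-dimensional $\mathsf H$-representations: via the equivalence $\Good{\cW_{S_i}}\simeq\Lmod{\D(\mathbb A^{d_i})}$, modules supported on $\overline\Lag_i$ become $\mathsf H$-monodromic $\D$-modules with characteristic variety in the zero section, i.e.\ $\mathsf H$-equivariant local systems on a contractible $\mathsf H$-space, and your functor $M\mapsto M^{\{p\}}$ computes their flat sections. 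With this correction everything goes through: simples are one-dimensional (characters of $\mathsf H$), and $\Ext^1$ vanishes because $\mathsf H$-representations form a semisimple category. Your $H^1(\overline\Lag_i,\cO)$ justification is therefore not the right mechanism, though the conclusion is the same.

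The paper takes a more hands-on route after reducing to $S_i$: it passes to $\D(\mathbb A^{d_i})$-modules and argues directly by a lowest-weight computation. The Hamiltonian grading on a simple monodromic $M$ supported on the attracting Lagrangian is bounded below (since the negatively-graded generators of $\D$ act nilpotently on $\gr M$), and on the lowest nonzero graded piece all $\partial_i$ vanish, producing a nonzero map $\cO_V\to M$ which must be an isomorphism. This avoids invoking any abstract Kashiwara equivalence and the twist bookkeeping you flag at the end. Your approach, once the target category is fixed, has the compensating advantage of handling part~(3) transparently via semisimplicity, whereas the paper's proof leaves (3) implicit.
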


\begin{proof}
Let $\cW_{S_i}$ denote the $\Gm$-equivariant $\cW$-algebra on $S_i$. 
By Theorem 1.5 of \cite{BDMN}, checking that each statement of the proposition holds on $\Lag_i \subset U$ is equivalent to checking the corresponding statement on $\overline{\Lag}_i\subset S_i$.  By \cite[Corollary 1.6]{BDMN}, the category of good $\cW_{S_i}$-modules is equivalent to the category of coherent $\D({\mathbb A}^{d_i})$-modules where $2d_i = \dim(S_i)$.  The additional equivariance with respect to the Hamiltonian action on $\resol$ translates to monodromicity of $\D({\mathbb A}^{d_i})$-modules.  Hence the claim is that there is a unique simple monodromic $\D({\mathbb A}^{d_i})$-module supported (microlocally) on the ``attracting locus'' in $T^*V$ where $V$ is a $\Gm$-representation with positive weights. The attracting locus is the zero section $V \subset T^* V$ and we need to show that there is a unique simple $\Gm$-monodromic local system on a $\Gm$-representation with positive weights. 

Let $\mathcal D= \mathcal D(\mathbb A^{d_i})$ denote the Weyl algebra equipped with the filtration $F_\bullet$ induced by the elliptic action on $\mathcal W_{S_i}$. The Hamiltonian action $\mathsf{H}$ induces a $\mathbb Z$-grading $\mathcal D = \bigoplus_{n \in \mathbb Z} \mathcal D_n$ where $\deg x_i > 0$ and $\deg \partial_i = - \deg x_i$. Note that  $\mathcal D_0 = \C\langle x_i\partial_i \rangle$ is a commutative subalgebra. Since the elliptic and Hamiltonian actions commute, the filtration and grading are compatible. Moreover, since $F_k\mathcal D$ is finite dimensional, so is $F_k \mathcal D\cap \mathcal D_n$ for every $n$. Suppose that $M$ is a simple monodromic $\mathcal D$-module supported on $V$. Let $M =\bigoplus_{n \in \mathbb Z} M_n$ be the grading on $M$ given by the Hamiltonian $\mathbb G_m$-action. Since $M$ is supported on $V$, we claim that $M_n=0$ for all $n$ sufficiently negative. To see this, note that since the associated graded of $M$ (with respect to the filtration $F_\bullet$) is supported on $V$, the generators of $\text{gr}_F\mathcal D$ with negative grading must act nilpotently. Since $\mathrm{gr}_F M$ is finitely generated, the result follows.  

Let $M_l \neq 0$ and $M_k =0$ for all $k<l$. Since $\partial_i$ has negative degree, $\partial_i m = 0$ for all $m \in M_l$. Thus, choosing a non-zero $m \in M_l$ defines a non-zero morphism $\mathcal{O}(V) = \mathcal{D} / (\partial_1,\partial_2, \dots ) \to M$ of $\mathcal D$-modules. Since $M$ is simple, this is an isomorphism. 

%, on which $\mathcal D_0$ acts semisimply. To see this one just has to note that $\C\langle t_i\partial_i\rangle = \text{Span}_{\C}\{t_i^k\partial_i^k: i \geq 0\}$ (or with the $t_i$s and $\partial_i$s reversed maybe?) and the use the bound on the grading. We may thus decompose if $M_l = \bigoplus_{\lambda \in \C^n} M_l^\lambda$, where $M^{\lambda}_l = \{m \in M_l: (t_i\partial_i - \lambda_i)(m)=0, 1\leq i \leq n\}$. Pick a $\lambda \in \C$ for which $M^\lambda_l \neq 0$ and choose $m \in M_l$ for which $\mathcal D_0$ acts via the maximal ideal given by $\lambda$. Then $\mathcal D.m$ is a nonzero submodule of $M$, and so since $M$ is simple, $M=\mathcal D.m$. It follows by the choice of $l$ that $M=\mathcal D_{>0}.m$, where $\mathcal D_{>0} = \bigoplus_{n>0}\mathcal D_n$. It remains to check that $M$ is in fact free as a module over $\mathcal D_{>0}$, so that $M$ is a twist of $\delta_L$ as required. But clearly $\mathcal D/\mathcal D_{\leq 0}$ surjects onto $M$, and since this quotient is a simple $\mathcal D$-module, it follows this surjection must in fact be an isomorphism as required.

\end{proof}

\subsection{Duality and Hamiltonian Reduction}\label{sec:dualityetc}

In this subsection we will briefly discuss the duality functor on
quantizations and its application to the construction of the (partially
defined) functor $i^{*}$. We will concern ourselves only with a local
situation, as that is all that we need for this paper. In particular,
following the notation of \cite{BDMN}, Section 4, we assume $\mathfrak{X}=\text{Spec}(R)$ is a smooth affine $\mathbb{G}_{m}$-equivariant symplectic variety, and $I$ is the ideal in $R$ generated by the homogeneous elements of negative degree. The closed subvariety $C=\text{Spec}(R/I)$ is co-isotropic, and the symplectic variety $S= \mathrm{Spec} (R/I)^{\{I,\cdot\}}$ is obtained via Hamiltonian reduction as in Theorem~\ref{thm:coistropicreduction}. We let $\mathfrak{C}$ denote the formal
completion of $C$ in $\mathfrak{X}$. There is a (non-canonical)
equivariant isomorphism 
\[
\mathfrak{C}\iso S\times\widehat{T^{*}V}
\]
where $V$ is a vector space and $\widehat{T^{*}V}$ is the completion
of $T^{*}V$ along $V$. Under this isomorphism, we have $C \iso S\times V$. 

If $Q$ is an algebra then $\LMod{Q}$ denotes the category of left $Q$-modules and $\Lmod{Q}$ the category of finitely generated left $Q$-modules. We identify the category of right $Q$-modules with $\LMod{Q^{\opp}}$.

If $A$ denotes our given quantization of $R$, then let $J$ be
the left ideal generated by homogeneous elements of negative degree.
Let $B$ denote the Hamiltonian reduction algebra 
\[
B=\text{End}(A/J)^{\opp}.
\]
Then we have:

\begin{prop}\label{prop:completeAJ}
The ideal $J$ is generated by a graded vector space $\mathfrak{u}$ of dimension
$\text{dim}(V)$. Let $\widehat{A}$ denote the completion of $A$
along $J$. Then there is an equivariant isomorphism of algebras
\[
\widehat{A}\iso B\widehat{\otimes}\mathcal{\widehat{D}}
\]
where $\widehat{\mathcal{D}}$ denotes the completion of the Weyl
algebra $\mathcal{D}$ of $V$ along the left ideal generated by $\mathfrak{u}$.
Under this isomorphism, $\widehat{J}$ corresponds to $B\widehat{\otimes}\mathcal{\widehat{D}}\mathfrak{u}$
and $\widehat{A}/\widehat{J}$ corresponds to $B\widehat{\otimes}(\mathcal{\widehat{D}}/\widehat{\mathcal{D}}\mathfrak{u})$. 
\end{prop}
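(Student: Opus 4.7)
The plan is to lift the classical Darboux-type splitting $\mathfrak{C} \iso S \times \widehat{T^*V}$ to a splitting of the quantization $\widehat{A}$. First, I would produce the graded generating subspace $\mathfrak{u}$ of $J$. At the classical level, $I \subset R$ is generated by $R_{<0}$, and smoothness of $\mathfrak{X}$ together with coisotropicity imply $I/I^2$ is a projective $R/I$-module of rank $\dim V = \on{codim}\, C$. A $\Gm$-equivariant splitting of $I \to I/I^2$ produces a graded subspace $\mathfrak{u}_0 \subset R$ of dimension $\dim V$, sitting in strictly negative degrees, that generates $I$. Using a $\Gm$-equivariant $\C[\![\h]\!]$-linear splitting of $A \to R$, I lift $\mathfrak{u}_0$ to a graded subspace $\mathfrak{u} \subset J$; a filtered Nakayama argument, valid because $\widehat{A}$ is $J$-adically separated, shows $\mathfrak{u}$ generates $\widehat{J}$ as a left ideal.

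Next, I would construct a subalgebra of $\widehat{A}$ isomorphic to $\widehat{\mathcal{D}}$. Classically, the nondegenerate symplectic pairing on $\mathfrak{X}$ produces elements $\bar v_1, \dots, \bar v_n \in R$ in strictly positive degrees $\ell - \deg u_i$ satisfying $\{u_i, \bar v_j\} = \delta_{ij}$ and $\{u_i, u_j\} = 0 = \{\bar v_i, \bar v_j\}$. Lifting these to elements $v_1, \dots, v_n \in \widehat{A}$ in the corresponding positive degrees yields the Weyl relations up to error in higher $\h$-filtration and higher $J$-adic filtration. By an iterative Moser/Fedosov-style correction, convergent because the successive correction terms carry strictly positive elliptic weight, I can modify the $v_j$'s to satisfy
\[
[u_i, v_j] = \delta_{ij}\h^{\ell}, \qquad [u_i, u_j] = 0 = [v_i, v_j]
\]
exactly, producing a $\mathsf{T}$-equivariant embedding $\widehat{\mathcal{D}} \hookrightarrow \widehat{A}$.

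I would then identify $B$ with the centralizer of this $\widehat{\mathcal{D}}$-subalgebra. The right $B$-action on $A/J$ lifts by the universal property to an action on $\widehat{A}/\widehat{A}\mathfrak{u}$ commuting with the $u_i$ and $v_j$, and elements of $\widehat{A}$ whose left multiplication is of this form constitute exactly the centralizer of $\widehat{\mathcal{D}}$; this gives an embedding $B \hookrightarrow \widehat{A}$. Multiplication then defines an equivariant algebra map
\[
\Phi \colon B \widehat{\otimes} \widehat{\mathcal{D}} \longrightarrow \widehat{A}.
\]
Reducing modulo $\h$ recovers the classical splitting $\mathfrak{C} \iso S \times \widehat{T^*V}$, so $\Phi$ is an isomorphism by a standard $\h$-adic and $J$-adic completeness argument. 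The assertions about $\widehat{J}$ and $\widehat{A}/\widehat{J}$ then follow immediately from the identification of $J$ with the left ideal generated by $\mathfrak{u}$.

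The main obstacle is the iterative correction in the second paragraph: one must argue that the obstructions to promoting the classical Darboux frame to a quantum one all live in subspaces of strictly positive elliptic weight, so that the correction series converges in the combined $\h$-adic and $J$-adic topology. This is where ellipticity enters decisively, much as in Fedosov's construction of canonical quantizations of symplectic manifolds and in Losev's treatment of finite $W$-algebras: the contracting $\Gm$-action is precisely what guarantees the required convergence and hence the existence of the commuting embeddings of $B$ and $\widehat{\mathcal{D}}$ into $\widehat{A}$.
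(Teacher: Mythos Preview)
Your outline is substantively correct and is essentially the content of the results the paper cites: the paper does not argue from scratch but refers to \cite{BDMN}, where Lemma~4.10 gives $\mathfrak{u}$ and the identification of $\widehat{J}$, Proposition~4.9 the algebra splitting $\widehat{A}\iso B\widehat{\otimes}\widehat{\mathcal{D}}$, Proposition~4.12 the identification of $\widehat{A}/\widehat{J}$, and Lemma~4.15(1) reconciles the two possible completions (along $J$ versus along the two-sided ideal $K$ lifting $I$). The iterative quantum-Darboux correction exploiting positive elliptic weights, and the recovery of $B$ inside $\widehat{A}$, is indeed the mechanism behind those cited results.

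One point to tighten: a $\Gm$-equivariant splitting of $I\to I/I^2$ does not by itself produce a $\dim V$-dimensional $\C$-subspace, since $I/I^2$ has \emph{rank} $\dim V$ over $R/I$, not $\C$-dimension $\dim V$. What you actually need is that $C$ is a complete intersection in this affine chart, so that one can choose $\dim V$ homogeneous elements of $I$ whose images generate $I/I^2$ over $R/I$, and then invoke Nakayama along the negative-weight filtration. In \cite{BDMN} this is arranged via the $\Gm$-equivariant (Bia{\l}ynicki-Birula) trivialization of the normal bundle of $C$, which is precisely where the reduction to a suitable affine open is used. Once $\mathfrak{u}_0$ is in hand, your lifting and correction steps proceed as you describe.
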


\begin{proof}
First, we note that in \cite[Section~4]{BDMN}, the algebra $\widehat{A}$ was defined to be the completion of $A$ along the two-sided ideal $K$, which is the preimage of $I$ under the quotient $A \to A / \hbar A \cong R$. But this is isomorphic, as topological algebras, to $\widehat{A}$ (as defined in the statement of the proposition) by \cite[Lemma~4.15(1)]{BDMN}. Then the first statement is Lemma~4.10 of \cite{BDMN}, as is the identification $\widehat{J} \iso B\widehat{\otimes}\mathcal{\widehat{D}}\mathfrak{u}$. The second statement is Proposition~4.9 of \cite{BDMN}. Finally, the proof of \cite[Proposition~4.12]{BDMN} explains that $\widehat{A}/\widehat{J}$ is mapped isomorphically to $B\widehat{\otimes}(\mathcal{\widehat{D}}/\widehat{\mathcal{D}}\mathfrak{u})$. 
\end{proof}

We require a better understanding of the relation between $\widehat{A}$-modules
and $A$-modules. Certainly, there is a flat morphism of algebras
$A\to\widehat{A}$, and the corresponding functor $M\to\widehat{A}\otimes_{A}M$,
which agrees with the naive completion when $M$ is finite over $A$.
In certain cases we can can actually recover $M$ from its completion.
To state the relevant result, recall the adjoint pair of functors
$\mathbb{H} \colon \LMod{A} \to \LMod{B}$ and $\mathbb{H}^{\perp} \colon \LMod{B} \to \LMod{A}$
given by 
\[
\mathbb{H}(M)=\Hom_{A}(A/J,M)
\]
and 
\[
\mathbb{H}^{\perp}(N)=A/J\otimes_{B}N .
\]
We have also the adjoint pair of functors 
\[
\widehat{\mathbb{H}}(M)=\Hom_{\widehat{A}}(\widehat{A}/\widehat{J},M)
\]
from $\LMod{\widehat{A}}$ to $\LMod{B}$ and
\[
\widehat{\mathbb{H}}^{\perp}(N)=\widehat{A}\otimes_{A}(A/J\otimes_{B}N)=\widehat{A}/\widehat{J}\otimes_{B}N
\]
from $\LMod{B}$ to $\LMod{\widehat{A}}$. We use the same
letters to denote the corresponding functors for $B_{\hbar}$-modules,
resp. $A_{\hbar}$ and $\widehat{A}_{\hbar}$-modules.

As explained in \cite[Proposition~4.14, Lemma~4.17]{BDMN}, we have isomorphisms
\begin{equation}\label{eq:RendeqB}
    \R \Hom_{\widehat{A}_{\hbar}}(\widehat{A}/\widehat{J}_{\hbar},\widehat{A}/\widehat{J}_{\hbar})\iso \R \Hom_{A_{\hbar}}(A/J_{\hbar},A/J_{\hbar}) \cong B^{\opp}_{\hbar}.
\end{equation}
From this we deduce:

\begin{prop}
Let $N$ be a finite $B_{\hbar}$-module, and let $M=\mathbb{H}^{\perp}(N)$.
Then there is a canonical isomorphism 
\[
N\iso\widehat{\mathbb{H}}\circ\widehat{\mathbb{H}}^{\perp}(N)
\]
and therefore a canonical isomorphism 
\[
M\iso\mathbb{H}^{\perp}\circ\widehat{\mathbb{H}}\circ\widehat{\mathbb{H}}^{\perp}(N).
\]
In particular, if $M_{1}$ and $M_{2}$ are two finite $A_{\hbar}$-modules,
both of which are in the image of $\mathbb{H}^{\perp}$, then $\widehat{M}_{1}\iso\widehat{M}_{2}$
implies $M_{1}\iso M_{2}$. 
\end{prop}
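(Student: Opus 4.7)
My plan is to first establish $N \iso \widehat{\mathbb{H}}\circ\widehat{\mathbb{H}}^{\perp}(N)$ for finite $B_\hbar$-modules $N$; the remaining assertions then follow formally. Let $K := \widehat{A}_\hbar/\widehat{J}_\hbar$; by \eqref{eq:RendeqB} we have $\End_{\widehat{A}_\hbar}(K) \cong B_\hbar^{\opp}$, making $K$ into a $(\widehat{A}_\hbar,B_\hbar)$-bimodule, and $\Ext^{i}_{\widehat{A}_\hbar}(K,K)=0$ for $i>0$. Proposition \ref{prop:completeAJ} further gives $K \cong B_\hbar \widehat{\otimes} (\widehat{\D}/\widehat{\D}\mathfrak{u})_\hbar$, from which $K$ is flat as a right $B_\hbar$-module. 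The unit of the adjunction $(\widehat{\mathbb{H}}^{\perp},\widehat{\mathbb{H}})$ provides a canonical morphism $\eta_N \colon N \to \Hom_{\widehat{A}_\hbar}(K, K \otimes_{B_\hbar} N)$, $n \mapsto (k\mapsto k\otimes n)$, and my task is to show $\eta_N$ is an isomorphism.

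For $N=B_\hbar$, $\widehat{\mathbb{H}}^{\perp}(B_\hbar)=K$ and $\widehat{\mathbb{H}}(K) = \End_{\widehat{A}_\hbar}(K) = B_\hbar^{\opp}$, canonically identified with $B_\hbar$ as a left $B_\hbar$-module so that $\eta_{B_\hbar}$ is the identity; by additivity, $\eta_{B_\hbar^n}$ is an isomorphism for every finite $n$. For general finite $N$, choose a resolution $P_\bullet \to N$ by finite free $B_\hbar$-modules (using Noetherianity of $B_\hbar$). Flatness of $K$ makes $K\otimes_{B_\hbar} P_\bullet$ a resolution of $\widehat{\mathbb{H}}^{\perp}(N)$, and the Ext-vanishing ensures that each $K \otimes_{B_\hbar} P_i$ is acyclic for $\Hom_{\widehat{A}_\hbar}(K,-)$. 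Hence $\widehat{\mathbb{H}}\widehat{\mathbb{H}}^{\perp}(N) = H^0\bigl(\Hom_{\widehat{A}_\hbar}(K, K\otimes_{B_\hbar} P_\bullet)\bigr)$; by the free-module case this complex is canonically isomorphic to $P_\bullet$, so taking $H^0$ yields $N$ via $\eta_N$.

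The second isomorphism $M\iso\mathbb{H}^{\perp}\widehat{\mathbb{H}}\widehat{\mathbb{H}}^{\perp}(N)$ is obtained by applying $\mathbb{H}^{\perp}$ to the first. For the final assertion, Proposition \ref{prop:completeAJ} implies $\widehat{A}_\hbar\cdot J_\hbar = \widehat{J}_\hbar$, whence $\widehat{M}_i = \widehat{A}_\hbar\otimes_{A_\hbar}(A/J)_\hbar\otimes_{B_\hbar} N_i = K\otimes_{B_\hbar} N_i = \widehat{\mathbb{H}}^{\perp}(N_i)$; thus $\widehat{M}_1\cong\widehat{M}_2$ forces $N_1 \cong \widehat{\mathbb{H}}(\widehat{M}_1)\cong\widehat{\mathbb{H}}(\widehat{M}_2)\cong N_2$ and hence $M_1\cong M_2$. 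The main obstacle I foresee is rigorously justifying the two inputs used in the second paragraph---flatness of $K$ over $B_\hbar$ through the completions in Proposition \ref{prop:completeAJ}, and Noetherianity of $B_\hbar$ so that finite free resolutions exist. A cleaner alternative would invoke the derived projection formula $\R\Hom_{\widehat{A}_\hbar}(K, K\otimes^{\mathbb{L}}_{B_\hbar} N) \cong B_\hbar^{\opp}\otimes^{\mathbb{L}}_{B_\hbar} N \cong N$, sidestepping both issues but requiring its own justification in this completed DQ context.
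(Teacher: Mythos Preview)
Your proof is correct and follows essentially the same strategy as the paper: reduce the unit map $N \to \widehat{\mathbb{H}}\widehat{\mathbb{H}}^{\perp}(N)$ to the free case $N = B_\hbar$ and invoke \eqref{eq:RendeqB}. The only packaging difference is that the paper first cites the uncompleted isomorphism $N \cong \mathbb{H}\mathbb{H}^{\perp}(N)$ from \cite[Theorem~4.21]{BDMN} and then reduces the remaining comparison $\mathbb{H}(M)\to\widehat{\mathbb{H}}(\widehat{M})$ ``by adjunction'' to $M = A_\hbar/J_\hbar$, whereas you establish the completed statement directly via a free resolution, using flatness of $K$ over $B_\hbar$ and the Ext-vanishing in \eqref{eq:RendeqB}; your version spells out what the paper's terse phrase is really doing. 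The two inputs you flag---Noetherianity of $B_\hbar$ and flatness of $K$ as a right $B_\hbar$-module---do hold (the first because $B_\hbar$ is a DQ-algebra on an affine symplectic variety, the second from the tensor decomposition of Proposition~\ref{prop:completeAJ}) and are implicitly taken for granted in the paper as well.
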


\begin{proof}
By \cite[Theorem~4.21]{BDMN}, there is an isomorphism $N\iso\mathbb{H}\circ\mathbb{H}^{\perp}(N)=\mathbb{H}(M)$.
So we only have to show that the canonical map $M\to\widehat{M}$
induces
\[
\mathbb{H}(M)\iso\widehat{\mathbb{H}}(\widehat{M}).
\]
As $M=A_{\hbar}/J_{\hbar} \otimes_{B_{\hbar}}N$ for a finite $B_{\hbar}$-module $N$, one reduces immediately by adjunction to showing 
\[
\mathbb{H}(A_{\hbar}/J_{\hbar})\iso\widehat{\mathbb{H}} (\widehat{A}_{\hbar}/\widehat{J}_{\hbar}),
\]
but this follows directly from equation \eqref{eq:RendeqB}. 
\end{proof}

\begin{remark}
By \cite[Theorem~4.21]{BDMN}, the image of $\Lmod{B_{\hbar}}$ under $\mathbb{H}^{\perp}$ is the full subcategory of $\Lmod{A_{\hbar}}$ supported on $C$. Moreover, $\Hamp$ is exact and hence extends to a functor between the derived categories associated to these abelian categories. 
\end{remark}

With this in hand, let us turn to duality functors. To save ourselves
from writing the same definition many times, let $Q$ denote any of
$A$, $\widehat{A}$, or $B$. As these are
algebras of finite homological dimension, we may define 
\[
\mathbb{D}(M)=\R \Hom_{Q_{\hbar}}(M,Q_{\hbar})[d]
\]
as a contravariant functor $D_{f.g.}^{b}(\LMod{Q_{\hbar}})\to D_{f.g.}^{b}(\LMod{Q^{\opp}_{\hbar}})$; where $2d = \dim \resol$ if $Q=A$ or $2d = \dim S$
if $Q=B$. Replacing $Q$ by $Q^{\text{opp }}$, we obtain
a functor $D_{f.g.}^{b}(\LMod{Q^{\opp}_{\hbar}})\to D_{f.g.}^{b}(\LMod{Q_{\hbar}})$,
also denoted by $\mathbb{D}$. Both of these functors are equivalences
of categories, and we have $\mathbb{D}\circ\mathbb{D}= \text{Id}$.
Note that $A^{\opp}$ is a quantization of $\mathfrak{X}$ (but with negative symplectic form) and $B^{\opp}$
is a quantization of $S$; so the above discussion, and the construction
of $\mathbb{H}$ and $\mathbb{H}^{\perp}$, applies verbatim to the
these algebras as well. 
\begin{thm}
Let $N$ be a finite $B_{\hbar}$-module. Then there is an isomorphism
\[
\mathbb{D}\circ\mathbb{H}^{\perp}(N)\iso\mathbb{H}^{\perp}\circ\mathbb{D}(N)
\]
in $D^b(\Lmod{A_{\hbar}})$. 
\end{thm}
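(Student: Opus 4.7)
The plan is to combine tensor--Hom adjunction with an explicit Koszul-type resolution of $A/J$ so as to reduce both sides of the proposed isomorphism to a common expression of the form $\R\Hom_{B_\hbar}(N,-)$. First, by tensor--Hom adjunction applied to the $(A_\hbar,B_\hbar)$-bimodule $A/J_\hbar$, and using the exactness of $\Hamp$ \cite[Theorem~4.21]{BDMN} to identify the underived and derived tensor products, one obtains
\[
\mathbb{D}\bigl(\mathbb{H}^\perp(N)\bigr) \;\cong\; \R\Hom_{B_\hbar}\bigl(N,\; \R\Hom_{A_\hbar}(A/J_\hbar, A_\hbar)\bigr)[d_A].
\]
Symmetrically, the opposite-algebra version of $\Hamp$ together with adjunction rewrites
\[
\mathbb{H}^\perp\bigl(\mathbb{D}(N)\bigr) \;\cong\; \R\Hom_{B_\hbar}\bigl(N,\; A^{\opp}/J^{\opp}_\hbar\bigr)[d_B],
\]
where $A^{\opp}/J^{\opp}$ denotes the opposite-algebra analogue of $A/J$, viewed in particular as a $(B_\hbar, A^{\opp}_\hbar)$-bimodule. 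Matching the two expressions reduces the claim to an isomorphism of bimodules
\[
\R\Hom_{A_\hbar}(A/J_\hbar, A_\hbar) \;\cong\; A^{\opp}/J^{\opp}_\hbar[-\dim V],
\]
together with the numerical identity $d_A - \dim V = d_B$; this last holds because $C$ is coisotropic of codimension $\dim V$ in $\resol$, so $\dim\resol - \dim S = 2\dim V$.

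For the bimodule identification I would use Proposition~\ref{prop:completeAJ}: the ideal $J$ is generated by a graded subspace $\mathfrak{u}$ of dimension $\dim V$, and under the splitting $\widehat A \cong B\,\widehat\otimes\,\widehat{\mathcal{D}}$ the generators of $\mathfrak{u}$ correspond to the $\partial$-coordinates of the Weyl factor and so form a regular sequence. The Koszul complex on $\mathfrak{u}$ therefore furnishes a resolution of $A/J_\hbar$ by free $A_\hbar$-modules; applying $\Hom_{A_\hbar}(-,A_\hbar)$ yields a dual Koszul complex whose cohomology is concentrated in degree $\dim V$. I would then identify this cohomology with $A^{\opp}/J^{\opp}_\hbar$, up to a twist by $\det(\mathfrak{u}^*)$ which is absorbed into the isomorphism, along the lines of the familiar $\mathcal{D}$-module side-changing statement $\R\Hom_{\mathcal{D}(V)}(\mathcal{O}(V),\mathcal{D}(V))\cong\omega_V\otimes_{\mathcal{O}(V)}\mathcal{O}(V)[-\dim V]$.

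The hard part will be tracking the two commuting module structures through the Koszul resolution --- in particular, the passage from a left $A$-ideal to a right $A$-ideal under $\R\Hom_{A_\hbar}(-,A_\hbar)$, and the accompanying orientation twist --- while respecting the $\hbar$-adic topology. My strategy is to establish the key bimodule isomorphism first over the completion $\widehat{A}_\hbar$, where Proposition~\ref{prop:completeAJ} reduces the computation via $B\,\widehat\otimes\,\widehat{\mathcal{D}}$ to the standard Weyl-algebra statement, and then to descend to $A_\hbar$ itself using that both $\mathbb{D}\circ\mathbb{H}^\perp(N)$ and $\mathbb{H}^\perp\circ\mathbb{D}(N)$ lie in the image of (the appropriate version of) $\Hamp$ applied to finitely generated $B_\hbar$-modules, so that an isomorphism after completion lifts, via the recovery result of the preceding proposition, to one before completion.
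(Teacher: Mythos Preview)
Your proposal is correct and follows essentially the same route as the paper: tensor--Hom adjunction reduces the statement to identifying $\R\Hom_{A_\hbar}(A/J_\hbar,A_\hbar)$ with the opposite-side quotient (the paper writes this as $(J'\backslash A)_\hbar[-\dim V]$, your $A^{\opp}/J^{\opp}_\hbar[-\dim V]$), and this identification is established via the Koszul complex over the completion $\widehat{A}_\hbar \cong B\,\widehat\otimes\,\widehat{\mathcal D}$ and then descended using the recovery proposition. One small slip: since $J$ is generated by \emph{negative}-degree elements, the generators $\mathfrak u$ correspond to the $x$-coordinates of the Weyl factor rather than the $\partial$'s (this does not affect the Koszul argument), and the descent is most cleanly carried out, as in the paper, cohomology-degree-by-cohomology-degree on $\mathbb D(A/J_\hbar)$ rather than on the final functorial isomorphism.
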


\begin{proof}
We shall first show the isomorphism 
\begin{equation}\label{eq:AJdualiso}
  \mathbb{D}(A/J_{\hbar})\iso \R \Hom_{A_{\hbar}}(A_{\hbar}/J_{\hbar},A_{\hbar})\iso(J'\backslash A)_{\hbar}[-d]  
\end{equation}
where $J'$ is the right ideal of $A$ generated by $\mathfrak{u}$,
and $d=\text{dim}(V)$. To show this isomorphism, we first tackle
the completed version 
\begin{equation}
\R \Hom_{\widehat{A}_{\hbar}}(\widehat{A}_{\hbar}/\widehat{J}_{\hbar},\widehat{A}_{\hbar})\iso (\widehat{J'}\backslash\widehat{A})_{\hbar}[-d]\label{eq:completed-iso}
\end{equation}
For this we recall that under the isomorphism $\widehat{A} \cong B\widehat{\otimes}\widehat{\mathcal{D}}$,
$\widehat{A}/\widehat{J}$ corresponds to $B\widehat{\otimes}(\mathcal{\widehat{D}}/\widehat{\mathcal{D}}\mathfrak{u})$.
As in the proof of \cite[Proposition~4.14]{BDMN}, we can consider the Koszul complex $B\widehat{\otimes}K(\widehat{\mathcal{D}},\mathfrak{u})$ which is a finite free resolution of $\widehat{A}/\widehat{J}$. We have 
\[
H^i(\R \Hom_{\widehat{A}_{\hbar}}(\widehat{A}/\widehat{J}_{\hbar},\widehat{A}_{\hbar})) \iso H^i(\underline{\Hom}_{\widehat{A}_{\hbar}}(B_{\hbar}\widehat{\otimes}K(\widehat{\mathcal{D}},\mathfrak{u})_{\hbar},\widehat{A}_{\hbar}))
\]
\[
\iso H^i(\wedge^{\bullet}\mathfrak{u}^{*}\otimes\widehat{A}_{\hbar}),
\]
where we have used adjunction in the last step. The complex of right
modules so obtained is then the Koszul resolution for $(\widehat{J'}\backslash\widehat{A})_{\hbar}$,
placed in degree $d$; which implies the above isomorphism.

Now, to obtain the uncompleted version, we note that, as the duality
functor commutes with localization, the complex $\mathbb{D}(A_{\hbar}/J_{\hbar})$
is supported along $C$. Therefore, by \cite[Theorem~4.21]{BDMN}, each
$H^{i}(\mathbb{D}(A_{\hbar}/J_{\hbar}))$ is in the image of $\mathbb{H}^{\perp}$.
Therefore we may apply the previous proposition (for the algebra $A^{\opp}$)
to deduce that, since 
\[
H^{d}(\mathbb{D}(\widehat{A}_{\hbar}/\widehat{J}_{\hbar}))\iso (\widehat{J'}\backslash\widehat{A})_{\hbar}
\]
and
\[
H^{i}(\mathbb{D}(\widehat{A}_{\hbar}/\widehat{J}_{\hbar}))=0
\]
for all $i\neq d$, we must also have 
\[
H^{d}(\mathbb{D}(A_{\hbar}/J_{\hbar}))\iso (J'\backslash A)_{\hbar}
\]
and $H^{i}(\mathbb{D}(A/J_{\hbar}))=0$ for all $i\neq d$ as claimed. 

Now, we compute 
\begin{align*}
\mathbb{D}\circ\mathbb{H}^{\perp}(N)& =\R \Hom_{A_{\hbar}}(A_{\hbar}/J_{\hbar}\otimes_{B_{\hbar}}^\L N,A_{\hbar})[\dim (\mathfrak{X})/2] \\
& \iso \R \Hom_{B_{\hbar}}(N,\R \Hom_{A_{\hbar}}(A_{\hbar}/J_{\hbar},A_{\hbar}))[\dim (\mathfrak{X})/2] \\
& \iso \R \Hom_{B_{\hbar}}(N,J'_{\hbar}\backslash A_{\hbar})[\dim (\mathfrak{X})/2-d] \\
& \iso \R \Hom_{B_{\hbar}}(N,B_{\hbar})[\dim (S)/2]\otimes_{B_{\hbar}}^\L J_{\hbar}'\backslash A_{\hbar}=\mathbb{H}^{\perp}\circ\mathbb{D}(N)
\end{align*}
as desired. 
\end{proof}

\begin{corollary} There is an isomorphism of functors
    $$
    \mathbb{D}\circ\mathbb{H}^{\perp} \iso\mathbb{H}^{\perp}\circ\mathbb{D} \colon D^b_{f.g.}(\LMod{B_{\hbar}}) \to D^b_{f.g.}(\LMod{A_{\hbar}}).
    $$
\end{corollary}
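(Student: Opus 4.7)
The plan is to observe that the isomorphism constructed in the preceding theorem is already functorial in $N$, and then to extend it from $\Lmod{B_\hbar}$ to $D^b_{f.g.}(\LMod{B_\hbar})$. For functoriality on the heart, I would inspect the final chain of isomorphisms in the theorem's proof: tensor-hom adjunction is bifunctorial in all its arguments; substitution of the fixed identification \eqref{eq:AJdualiso} into $\R\Hom_{B_\hbar}(N,-)$ is natural in $N$; and the projection-formula identification $\R\Hom_{B_\hbar}(N,(J'\backslash A)_\hbar) \iso \R\Hom_{B_\hbar}(N,B_\hbar) \otimes^{\L}_{B_\hbar} (J'\backslash A)_\hbar$ is natural in $N$. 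Composing these steps produces a morphism of contravariant functors $\mathbb{D}\circ\mathbb{H}^\perp \to \mathbb{H}^\perp\circ\mathbb{D}$ on $\Lmod{B_\hbar}$ whose evaluation on each object is an isomorphism by the theorem.

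To extend this natural transformation to the bounded derived category, I would use that $B_\hbar$ has finite homological dimension, so every $N \in D^b_{f.g.}(\LMod{B_\hbar})$ is quasi-isomorphic to a bounded complex of finitely generated projective $B_\hbar$-modules. Applying the pointwise natural transformation termwise then yields a morphism of contravariant triangulated functors on $D^b_{f.g.}(\LMod{B_\hbar})$ extending the one on the heart. To check this extension is an isomorphism on every object, I would induct on cohomological amplitude using the truncation triangles $\tau^{\leq n-1}N \to \tau^{\leq n}N \to H^n(N)[-n] \to$, applying the five-lemma in the triangulated category with the base case (objects concentrated in a single degree) supplied directly by the theorem.

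The main subtlety I anticipate is the functoriality of the projection-formula step (iii) above --- that is, ensuring $\R\Hom_{B_\hbar}(-,(J'\backslash A)_\hbar)$ and $\R\Hom_{B_\hbar}(-,B_\hbar) \otimes^{\L}_{B_\hbar} (J'\backslash A)_\hbar$ agree as functors and not merely on individual objects. This requires choosing compatible resolutions: a K-projective resolution of $N$ together with a K-flat presentation of $(J'\backslash A)_\hbar$ over $B_\hbar$, so that the tensor-hom adjunction, the dualizing isomorphism, and the projection formula are simultaneously modelled by a single chain of morphisms of complexes. Finite generation of $N$ and finiteness of the global dimension of $B_\hbar$ guarantee that such resolutions can be chosen bounded and finite, so that the resulting natural transformation descends to the derived category and the induction argument above applies uniformly.
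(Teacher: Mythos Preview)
Your proposal is correct, and the paper gives no separate proof for the corollary --- it is stated as an immediate consequence of the theorem. Your approach (verify naturality of each step, then extend to complexes by induction on amplitude via truncation triangles) is valid.

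That said, the induction is not really needed. The final displayed chain of isomorphisms in the theorem's proof is already written entirely in terms of derived functors ($\R\Hom$, $\otimes^{\L}$) and standard natural isomorphisms (tensor--hom adjunction, substitution of the fixed identification \eqref{eq:AJdualiso}, and the projection-formula step). Each of these is a natural transformation between functors on $D^b_{f.g.}(\LMod{B_\hbar})$, not merely on the heart, so the chain applies verbatim to any object $N$ of the bounded derived category. The only step requiring a hypothesis on $N$ is the last one, $\R\Hom_{B_\hbar}(N,(J'\backslash A)_\hbar)\iso \R\Hom_{B_\hbar}(N,B_\hbar)\otimes_{B_\hbar}^{\L}(J'\backslash A)_\hbar$, which needs $N$ to be perfect; since $B_\hbar$ has finite global dimension this holds for every $N\in D^b_{f.g.}(\LMod{B_\hbar})$. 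So one can simply read the theorem's computation as already proving the corollary, with no further argument.
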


Now we can define the desired functor $\mathbb{H}^{*}$. Recall (see \cite[Chapter~6]{KS}) that we have the category $\text{Pro}(\Lmod{A_{\hbar}})$
the category of pro-objects on $\Lmod{A_{\hbar}}$,
and its derived category $D(\text{Pro}(\Lmod{A_{\hbar}}))$. If $P$ is any projective object in $\LMod{A_{\hbar}}$, then, writing $P$ as an inductive limit of finite $A_{\hbar}$-modules,
the module $\Hom(P,A_{\hbar})$ is naturally an element of $\text{Pro}(\LMod{A_{\hbar}^{\opp}})$. Therefore
the duality $\mathbb{D}$ extends to a contravariant functor 
\[
\mathbb{D} \colon D(\LMod{A_{\hbar}})\to D(\text{Pro}(\Lmod{A_{\hbar}}))
\]
which is easily seen to be an equivalence of categories; indeed, its inverse is given by the cocontinuous extension of $\mathbb{D} \colon D(\Lmod{A_{\hbar}}) \to D(\LMod{A_{\hbar}})$
to $D(\text{Pro}(\Lmod{A_{\hbar}}))$; which we will
also denote by $\mathbb{D}$. Similarly, the functor $\mathbb{H}^{\perp}$
admits a cocontinious extension to a functor $\mathbb{H}^{\perp}:D(\text{Pro}(\Lmod{B_{\hbar}}))\to D(\text{Pro}(\Lmod{A_{\hbar}}))$. Then we have:

\begin{prop}\label{prop:adjunctionforHamperp}
Let $N\in D(\mathrm{Pro}(\Lmod{A_{\hbar}}))$. Then we
define 
\[
\mathbb{H}^{*}(N):=\mathbb{D}\mathbb{H}(\mathbb{D}N)\in D(\mathrm{Pro}(\Lmod{B_{\hbar}}))
\]
We have a functorial isomorphism 
\[
\R\mathrm{Hom}_{A_{\hbar}}(N,\mathbb{H}^{\perp}M)\iso \R\mathrm{Hom}_{B_{\hbar}}(\mathbb{H}^{*}N,M)
\]
for all $N\in D(\mathrm{Pro}(\Lmod{A_{\hbar}}))$ and
$M\in D(\mathrm{Pro}(\Lmod{B_{\hbar}}))$. 
\end{prop}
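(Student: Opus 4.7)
The plan is to chain together a sequence of natural isomorphisms built from three inputs: (a) the duality equivalence $\mathbb{D}$ (both on $D(\LMod{A_{\hbar}})$ and its cocontinuous extension to $D(\mathrm{Pro}(\Lmod{A_{\hbar}}))$) together with its Hom-swap property; (b) the Corollary $\mathbb{D}\circ\mathbb{H}^{\perp}\cong\mathbb{H}^{\perp}\circ\mathbb{D}$; and (c) the adjoint pair $(\mathbb{H}^{\perp},\mathbb{H})$ already recalled at the start of Section~\ref{sec:dualityetc}, applied over the opposite algebras. All three inputs apply equally well to $A$, $A^{\opp}$, $B$, $B^{\opp}$ by the remark in the statement of the theorem preceding the corollary.

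First, by the fact that $\mathbb{D}$ is a contravariant equivalence interchanging $D(\LMod{A_{\hbar}})$ and $D(\mathrm{Pro}(\Lmod{A_{\hbar}^{\opp}}))$ (with inverse given by the cocontinuous extension of the same-named functor), the duality fits into a functorial Hom-swap isomorphism
\[
\R\Hom_{A_{\hbar}}(N,\mathbb{H}^{\perp}M)\;\iso\;\R\Hom_{A_{\hbar}^{\opp}}\!\bigl(\mathbb{D}\mathbb{H}^{\perp}M,\,\mathbb{D}N\bigr).
\]
Applying the Corollary (in its cocontinuous extension to pro-objects) to move $\mathbb{D}$ past $\mathbb{H}^{\perp}$ converts the right-hand side to
\[
\R\Hom_{A_{\hbar}^{\opp}}\!\bigl(\mathbb{H}^{\perp}\mathbb{D}M,\,\mathbb{D}N\bigr).
\]
Second, invoke the $(\mathbb{H}^{\perp},\mathbb{H})$ adjunction for the opposite algebras — this is literally the adjunction recalled after Proposition~\ref{prop:completeAJ}, transported to $A^{\opp}$, $B^{\opp}$, and extended cocontinuously to the pro-derived categories — to produce
\[
\R\Hom_{A_{\hbar}^{\opp}}\!\bigl(\mathbb{H}^{\perp}\mathbb{D}M,\,\mathbb{D}N\bigr)\;\iso\;\R\Hom_{B_{\hbar}^{\opp}}\!\bigl(\mathbb{D}M,\,\mathbb{H}\mathbb{D}N\bigr).
\]
Finally, apply the Hom-swap property of $\mathbb{D}$ once more (now between $D(\LMod{B_{\hbar}})$ and $D(\mathrm{Pro}(\Lmod{B_{\hbar}^{\opp}}))$) and use $\mathbb{D}^{2}=\id$ to rewrite the right-hand side as
\[
\R\Hom_{B_{\hbar}}\!\bigl(\mathbb{D}\mathbb{H}\mathbb{D}N,\,M\bigr)\;=\;\R\Hom_{B_{\hbar}}\!\bigl(\mathbb{H}^{*}N,\,M\bigr).
\]
Concatenating the four displayed isomorphisms yields the desired adjunction, and functoriality in $N$ and $M$ is built in at each step because every isomorphism we use is natural.

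The main obstacle is not the chain itself but the verification that each input extends cleanly to the pro-setting. Specifically: the Corollary was established only for $D^b_{f.g.}$, so one must check that the comparison morphism $\mathbb{D}\mathbb{H}^{\perp}\to\mathbb{H}^{\perp}\mathbb{D}$ remains an isomorphism after cocontinuous extension along the filtered colimit presentation of pro-objects; this follows because both sides commute with the relevant limits/colimits by construction of $\mathbb{D}$ and $\mathbb{H}^{\perp}$ as explained in the paragraph introducing $\mathbb{H}^{*}$. Similarly, the unit/counit of the $(\mathbb{H}^{\perp},\mathbb{H})$ adjunction — itself derived from the isomorphism $N\iso\mathbb{H}\mathbb{H}^{\perp}N$ of the earlier proposition — must be checked to persist through the cocontinuous extension; again this holds because $\mathbb{H}^{\perp}$ is cocontinuous on pro-objects while $\mathbb{H}$ is defined as the derived functor of a $\Hom$ out of a compact object $A/J$, hence commutes with filtered colimits at the level of pro-representatives. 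Once these bookkeeping points are in place, the four-step chain of isomorphisms above is straightforward.
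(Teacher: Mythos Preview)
Your proof is correct and follows essentially the same approach as the paper: the paper writes $N=\mathbb{D}N'$, $M=\mathbb{D}M'$ and runs the identical four-step chain (Corollary $\mathbb{D}\mathbb{H}^{\perp}\cong\mathbb{H}^{\perp}\mathbb{D}$, Hom-swap for $\mathbb{D}$, the $(\mathbb{H}^{\perp},\mathbb{H})$ adjunction over the opposite algebras, Hom-swap again), only with the first two steps in the reverse order. Your discussion of the pro-category extensions is more explicit than the paper's, which simply relies on the setup in the paragraph preceding the proposition.
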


\begin{proof}
Let $N=\mathbb{D}N'$ and $M=\mathbb{D}M'$. Then 
\[
\R\Hom_{A_{\hbar}}(N,\mathbb{H}^{\perp}M)\iso \R\Hom_{A_{\hbar}}(\mathbb{D}N',\mathbb{H}^{\perp}\mathbb{D}M')
\]
\[
\iso \R\Hom_{A_{\hbar}}(\mathbb{D}N',\mathbb{D}\mathbb{H}^{\perp}M')\iso \R\Hom_{A^{\opp}_{\hbar}}(\mathbb{H}^{\perp}M',N')
\]
\[
\iso \R\Hom_{B^{opp}_{\hbar}}(M',\mathbb{H}N')\iso \R\Hom_{B_{\hbar}}(\mathbb{H}^{*}N,M)
\]
as claimed. 
\end{proof}

\subsection{Preservation of Holonomicity}\label{sec:preserveholonomic}

Let $U\subset U'$ be open subsets of $\resol$ which are unions of coisotropic cells and write $Z = U' \setminus U$. We form the usual diagram $Z \stackrel{i}{\hookrightarrow} U' \stackrel{j}{\hookleftarrow} U$, as in \eqref{eq:poencloseddiagram}. Denote by $D_{\hol}^{b}(\QCoh{\cW_{\resol}})$ the derived category of quasi-coherent $\cW$-modules with holonomic cohomology. The following key result is crucial in proving our main result that there exists a holonomic local generator in $D^b(\Good{\cW})$. The proof is given in the next subsection.

\begin{thm}\label{thm:holonomicpushforward}
Let $\ms{M}\in D_{\hol}^{b}(\QCoh{\cW_{U}})$. Then $j_{*} \ms{M} \in D_{\hol}^{b}(\QCoh{\cW_{U'}})$.
Equivalently, if $\ms{N} \in D_{\hol}^{b}(\QCoh{\cW_{U'}})$,
then $\R i^{!} \ms{N} \in D_{\hol}^{b}(\QCoh{\cW_{Z}})$. 
\end{thm}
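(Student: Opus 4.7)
The plan is to combine a recollement-type reduction to the case of a single cell with a local analysis via Hamiltonian reduction. First, the two formulations are equivalent: the open--closed triangle $i_*\R i^!\ms{N}\to\ms{N}\to\R j_*j^*\ms{N}\to[1]$, together with the facts that $i_*$ is exact, fully faithful with image the subcategory of modules supported on $Z$, and commutes with the duality $\mathbb{D}$ (by the corollary $\mathbb{D}\circ\mathbb{H}^\perp\iso\mathbb{H}^\perp\circ\mathbb{D}$ proved above), while $\mathbb{D}$ itself preserves holonomicity, allow one to pass freely between the two statements. So I focus on the assertion about $\R i^!$.

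I induct on the number of coisotropic cells comprising $Z=U'\smallsetminus U$. Choose a cell $C\subseteq Z$ that is closed in $U'$, and set $U'':=U'\smallsetminus C$, so that $U\subseteq U''\subsetneq U'$ and $U''\smallsetminus U=Z\smallsetminus C$ has one fewer cell than $Z$. Writing the open inclusion $j\colon U\hookrightarrow U'$ as a composition $j=k\circ j'$ with $j'\colon U\hookrightarrow U''$ and $k\colon U''\hookrightarrow U'$, the inductive hypothesis applied to $(U,U'')$ shows that $\R j'_*$ preserves bounded holonomicity, while the base case (a single cell) applied to $(U'',U')$ does the same for $\R k_*$. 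Composition then gives the result for $\R j_*$ and reduces the theorem to the case where $Z=C$ is a single coisotropic cell, closed in $U'$.

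For the base case, boundedness of $\R i^!\ms{N}$ is the easier half. Localizing on $C$ and applying Proposition~\ref{prop:completeAJ} gives the affine model $\widehat{A}\cong B\widehat{\otimes}\widehat{\cD}$, under which $\R i^!$ corresponds (via the Hamiltonian reduction equivalence $\mathbb{H}$) to $\R\Hom_{A_\hbar}(A/J_\hbar,-)$. The Koszul resolution of $A/J_\hbar$ by $\Lambda^\bullet\mathfrak{u}\otimes_\C A_\hbar$, of length $d=\dim V$, shows that this functor has cohomological amplitude at most $d$; hence $\R i^!\ms{N}$ is bounded whenever $\ms{N}$ is.

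The crux of the base case is the holonomicity of each $H^k(\R i^!\ms{N})$, and the argument is symplectic-geometric. Choose a good $\sA$-lattice on each cohomology module of $\ms{N}$, so that the support of the associated graded is a Lagrangian $\Lambda\subseteq\resol$ of dimension $\tfrac{1}{2}\dim\resol$. Computing $\R\Hom_{\sA}$ against the Koszul resolution of $\sA/J$ and passing to associated gradeds produces a lattice on $\R i^!\ms{N}$ whose support is contained in $\Lambda\cap C$, so under the Hamiltonian reduction projection $\pi_C\colon C\to S$ the support of each $H^k(\R i^!\ms{N})$ lies in $\pi_C(\Lambda\cap C)\subseteq S$. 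The standard symplectic fact that the image in the coisotropic reduction of the intersection of a Lagrangian with the ambient coisotropic is isotropic then gives $\dim\pi_C(\Lambda\cap C)\le\tfrac{1}{2}\dim S$, so $H^k(\R i^!\ms{N})$ is holonomic. The main technical obstacle is to make this dimension estimate rigorous at the level of individual cohomology sheaves and their lattices---in particular, choosing a good $\sA$-lattice on $\ms{N}$ compatible with the Koszul differential and tracking the associated graded through the Hamiltonian reduction equivalence of Proposition~\ref{prop:completeAJ}.
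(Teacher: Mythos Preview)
Your overall architecture---showing the two formulations are equivalent, then inducting down to a single closed cell, then analyzing locally via Hamiltonian reduction and a Koszul resolution for boundedness---matches the paper's. Two points, one minor and one substantive.

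\textbf{Minor.} For the equivalence of the two formulations you invoke commutation of $i_*$ with $\mathbb{D}$. The paper does not use duality here at all: it uses the triangle $i_*\R i^!\ms{N}\to\ms{N}\to\R j_*j^*\ms{N}$ together with the fact (from \cite[Corollary~3.33]{BDMN}) that every holonomic $\cW_U$-module admits a holonomic extension to $U'$. That extension step is what lets you go from ``$\R i^!$ preserves holonomic'' to ``$\R j_*$ preserves holonomic'', and you have omitted it.

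\textbf{Substantive.} Your base-case holonomicity argument is where the real content lies, and as you yourself flag, it is incomplete. You want to argue that the characteristic variety of $H^k(\R i^!\ms{N})$, as a $B_\hbar$-module, is contained in $\pi_C(\Lambda\cap C)$, and then invoke the symplectic fact that this image is isotropic. The symplectic fact is fine, and one can indeed show that the $A_\hbar$-support of $H^k(\R\Gamma_C\ms{N})$ lies in $\Lambda\cap C$ by working with the Koszul lattice. But the passage from the $A_\hbar$-support on $C$ to the $B_\hbar$-support on $S$---i.e.\ the assertion that under the equivalence $\mathbb{H}$ one has $\mathrm{supp}_S(\mathbb{H}(N))=\pi_C(\mathrm{supp}_{\resol}(N))$ at the level of good filtrations---is not established in \cite{BDMN} and is exactly the characteristic-variety estimate whose $\D$-module analogue (e.g.\ \cite[2.4.6]{HTT}) already requires a nontrivial filtered argument. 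You would essentially be reproving that estimate in the $\cW$-setting.

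The paper takes a different route to avoid this. After passing to $\widehat{A}_\hbar$ and taking $\mathbb{G}_m$-invariants, it introduces a weighted $V$-filtration on the resulting algebra $\widehat{D}_{n,m}$, shows its associated graded is the ordinary Weyl algebra $D_{n+m}$, and then uses a Rees-algebra deformation: the Rees module of a holonomic $\ms{N}$ interpolates between $\ms{N}$ at $T=1$ and $\mathrm{gr}(\ms{N})$ at $T=0$. Since $\mathrm{gr}(\ms{N})$ is holonomic over $D_{n+m}$, the classical $D$-module preservation of holonomicity under $i^!_0$ gives holonomicity of $H^k(i^!\mathrm{gr}(\ms{N}))$, and a semicontinuity argument (via the injection $H^k(i^!M)/T\hookrightarrow H^k(i^!(M/T))$) transports this to $H^k(i^!_1\ms{N})$. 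So rather than proving a new characteristic-variety estimate, the paper degenerates to the known Weyl-algebra case.

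Your geometric approach is the right intuition and could plausibly be completed, but doing so would require establishing the support compatibility of $\mathbb{H}$ with good filtrations, which is comparable in difficulty to what the paper actually does.
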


As a consequence of Theorem~\ref{thm:holonomicpushforward}, we may define
\begin{align*}
    j_! & = \mathbb{D} \circ \R j_* \circ \mathbb{D} \colon D_{\hol}^b(\QCoh{\cW_U}) \to D_{\hol}^b(\QCoh{\cW_{U'}}), \\
i^* & = \mathbb{D} \circ \R i^! \circ \mathbb{D} \colon D_{\hol}^b(\QCoh{\cW_{U'}}) \to D_{\hol,Z}^b(\QCoh{\cW_{U'}}).
\end{align*}
and get a recollment pattern
    \[
    \begin{tikzcd}
    D^b_{\hol,Z}(\QCoh{\cW_{U'}}) \ar[rr,"{i_*}"] & & \ar[ll,"{\R i^!}"',bend right=20] \ar[ll,"{i^*}",bend left=15] D^b_{\hol}(\QCoh{\cW_{U'}}) \ar[rr,"{j^*}"] & &  D^b_{\hol}(\QCoh{\cW_{U}})  \ar[ll,"{\R j_*}"',bend right=20] \ar[ll,"{j_!}",bend left=15].    
    \end{tikzcd}
    \]
This means that $i^*$ is left adjoint (resp. $\R i^!$ right adjoint) to $i_*$ and $j_!$ left adjoint (resp. $\R j_*$ right adjoint) to $j^*$. Moreover, $j^* \circ i_* = 0, i^* \circ j^! = 0$ and $\R i^! \circ \R j_* = 0$. These functors give rise to the usual distinguished triangles 
\[
i_! \circ \R i^! (\ms{M}) \to \ms{M} \to \R j_* \circ j^* (\ms{M}) \stackrel{[1]}{\rightarrow}
\]
\[
i_* \circ i^* (\ms{M}) \to \ms{M} \to j_! \circ j^! (\ms{M}) \stackrel{[1]}{\rightarrow}, 
\]
noting that $i_* = i_!$ and $j^! = j^*$. Finally, the functors $i_*,\R j_*$ and $j_!$ are fully faithful. 

\subsection{Proof of Theorem~\ref{thm:holonomicpushforward}}

The fact that the two statements of the theorem are equivalent follows from the triangle \eqref{eq:openclosedtriangle}:
\[
i_{*} \R i^{!} \ms{N} \to \ms{N} \to j_{*}(\ms{N}|_{U}),
\]
as well as the fact that every holonomic module in $\QCoh{\cW_{U}}$
admits a holonomic extension to $\QCoh{\cW_{U'}}$ by \cite[Corollary~3.33]{BDMN}. 

To proceed, note that by induction on the number of cells, we may
suppose that $Z = U' \setminus U$ is a single cell, called $C$. We shall
prove the second statement in this context. As the statement is local,
we can suppose we are in the affine situation, and $\Gamma(\resol,\cW_{\resol})=A_{\hbar}$.
Let $\widehat{A}_{\hbar}$ denote the completion of $A_{\hbar}$ along
the ideal $J$ (notation as in Section~\ref{sec:dualityetc}). By Proposition~\ref{prop:completeAJ}, we have the isomorphism 
\[
\widehat{A}=\widehat{\mathcal{D}}\widehat{\otimes}B
\]
where $\widehat{\mathcal{D}}$ is the standard quantization of $\widehat{T^{*}V}$
and $B$ is the quantization of $S$, the coisotropic reduction of
$C$. It is explained in \cite[Section 2.6]{BDMN} that the fact that $S$ has a single fixed point for the elliptic $\mathbb{G}_{m}$-action implies that the algebra $B_{\hbar}$ is isomorphic to the $\hbar$-Weyl algebra $\mathcal{D}_{\hbar}$, of rank $m = (1/2)\dim S$. 

\begin{lem}\label{lem:AtoAhatBholo}
    Let $M$ be a good $A_{\hbar}$-module. Then 
    \[
    \R i^! M [n] \cong J_{\hbar}' \backslash A_{\hbar} \o_{A_{\hbar}}^{\mathbb{L}} M \cong \widehat{J}_{\hbar}' \backslash \widehat{A}_{\hbar} \o_{\widehat{A}_{\hbar}}^{\mathbb{L}} \widehat{M},
    \]
    where $n = \dim V$ and $\widehat{M}=\widehat{A}_{\hbar}\otimes_{A_{\hbar}} M$. 
\end{lem}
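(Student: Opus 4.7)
The plan is to translate $\R i^!$ into a $\Hom$ functor via adjunction, then reduce everything to the bimodule identity \eqref{eq:AJdualiso}. Since the adjoint pair $(i_*, \R i^!) = (\Hamp, \R\Ham)$ locally corresponds, as in Proposition~\ref{prop:adjunctionforHamperp}, to the tensor--Hom pair for the $(A_{\hbar}, B_{\hbar})$-bimodule $A_{\hbar}/J_{\hbar}$, standard adjunction gives
$$\R i^! M \cong \R\Hom_{A_{\hbar}}(A_{\hbar}/J_{\hbar}, M).$$

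Next, I would verify that $A_{\hbar}/J_{\hbar}$ is perfect as a left $A_{\hbar}$-module, with a finite free resolution of length $n = \dim V$ furnished by the Koszul complex $K(A_{\hbar}, \mathfrak{u}) = A_{\hbar} \otimes \Lambda^{\bullet}\mathfrak{u}$. The completed analogue is exactly what appears in the proof of \eqref{eq:AJdualiso}; the uncompleted version follows because the higher cohomologies of $K(A_{\hbar}, \mathfrak{u})$ are supported on $C$, and the completion functor $\widehat{A}_{\hbar} \otimes_{A_{\hbar}}(-)$ is flat and faithful on such modules (see \cite[Section~4]{BDMN}). Perfectness together with \eqref{eq:AJdualiso} then yields
$$\R\Hom_{A_{\hbar}}(A_{\hbar}/J_{\hbar}, M) \cong \R\Hom_{A_{\hbar}}(A_{\hbar}/J_{\hbar}, A_{\hbar}) \otimes^{\L}_{A_{\hbar}} M \cong (J'\backslash A)_{\hbar}[-n] \otimes^{\L}_{A_{\hbar}} M,$$
and shifting by $[n]$ produces the first claimed isomorphism.

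For the second isomorphism, I would run the same argument at the completed level, using the (completed) Koszul resolution that already appears in the proof of \eqref{eq:AJdualiso}, to obtain
$$\R\Hom_{\widehat{A}_{\hbar}}(\widehat{A}_{\hbar}/\widehat{J}_{\hbar}, \widehat{M})[n] \cong (\widehat{J}'\backslash\widehat{A})_{\hbar} \otimes^{\L}_{\widehat{A}_{\hbar}} \widehat{M}.$$
It then remains to identify $\R\Hom_{A_{\hbar}}(A_{\hbar}/J_{\hbar}, M) \cong \R\Hom_{\widehat{A}_{\hbar}}(\widehat{A}_{\hbar}/\widehat{J}_{\hbar}, \widehat{M})$, which holds because both arguments of the $\R\Hom$ can be freely replaced by their completions when the source module is supported on $C$—again by the completion machinery of \cite[Section~4]{BDMN}.

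The main obstacle will be the careful bookkeeping at the completion step: specifically, verifying that $K(A_{\hbar}, \mathfrak{u})$ is a resolution in the uncompleted setting, and that the derived $\Hom$ above is completion-invariant on this class of modules. No fundamentally new technique is required, but each step reduces to a support hypothesis that must be precisely formulated in the language of \cite[Section~4]{BDMN}.
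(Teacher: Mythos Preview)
Your argument for the first isomorphism is essentially the paper's: identify $\R i^! M$ with $\R\Hom_{A_\hbar}(A_\hbar/J_\hbar,M)$, use perfectness of $A_\hbar/J_\hbar$ to pull the $\R\Hom$ apart, and invoke \eqref{eq:AJdualiso}. One simplification: you do not need to separately verify that the \emph{uncompleted} Koszul complex $K(A_\hbar,\mathfrak{u})$ resolves $A_\hbar/J_\hbar$. Perfectness is automatic since $A_\hbar$ has finite global dimension, and the concentration of $\R\Hom_{A_\hbar}(A_\hbar/J_\hbar,A_\hbar)$ in degree $n$ is already the content of \eqref{eq:AJdualiso}. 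The paper also spells out why $\Gamma_C$ can be dropped from $i^! = \Ham\circ\Gamma_C$: any homomorphism $A_\hbar/J_\hbar \to M$ lands in $\Gamma_C(M)$, so $\Hom_{A_\hbar}(A_\hbar/J_\hbar,\Gamma_C(M)) = \Hom_{A_\hbar}(A_\hbar/J_\hbar,M)$.

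For the second isomorphism your route works but is more circuitous than needed. The paper does not redo the $\R\Hom$ computation on the completed side and then argue completion-invariance; instead it uses directly that $J'_\hbar\backslash A_\hbar \cong \widehat{J}'_\hbar\backslash\widehat{A}_\hbar$ as $(B_\hbar,A_\hbar)$-bimodules (this is \cite[Theorem~4.15]{BDMN}), together with flatness of $\widehat{A}_\hbar$ over $A_\hbar$, to write
\[
\widehat{J}'_\hbar\backslash\widehat{A}_\hbar \otimes^{\L}_{\widehat{A}_\hbar} \widehat{M}
\;\cong\;
\widehat{J}'_\hbar\backslash\widehat{A}_\hbar \otimes^{\L}_{\widehat{A}_\hbar} \widehat{A}_\hbar \otimes^{\L}_{A_\hbar} M
\;\cong\;
\widehat{J}'_\hbar\backslash\widehat{A}_\hbar \otimes^{\L}_{A_\hbar} M
\;\cong\;
J'_\hbar\backslash A_\hbar \otimes^{\L}_{A_\hbar} M.
\]
This avoids the ``completion-invariance of $\R\Hom$'' step entirely.
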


\begin{proof}
    Recall that $i^! M = \Ham \circ \Gamma_C(M) = \Hom_{A_{\hbar}}(A_{\hbar} / J_{\hbar}, \Gamma_C(M))$, where $\Gamma_C(M)$ is the largest submodule of $M$ supported on $C$. If $\phi \in \Hom_{A_{\hbar}}(A_{\hbar} / J_{\hbar}, M)$ then $\mathrm{Im} \, \phi$ is a quotient of $A_{\hbar} / J_{\hbar}$ and hence supported on $C$. Thus, $\mathrm{Im} \, \phi \subset \Gamma_C(M)$ meaning that $i^! M = \Hom_{A_{\hbar}}(A_{\hbar} / J_{\hbar}, M)$. 

    Passing to derived functors, 
    \begin{align*}
        \R i^! M & = \R \Hom_{A_{\hbar}}(A_{\hbar} / J_{\hbar}, M) \\
        & \iso \R \Hom_{A_{\hbar}}(A_{\hbar} / J_{\hbar}, A_{\hbar}) \o_{A_{\hbar}}^{\mathbb{L}} M \iso J_{\hbar}' \backslash A_{\hbar} \o_{A_{\hbar}}^{\mathbb{L}} M [-n], 
    \end{align*}
 where we applied the isomorphism \eqref{eq:AJdualiso}. This establishes the first isomorphism. 

The proof of \cite[Theorem~4.15]{BDMN} shows that $J_{\hbar}' \backslash A_{\hbar} \iso \widehat{J}_{\hbar}' \backslash \widehat{A}_{\hbar}$ as $(B_{\hbar},A_{\hbar})$-bimodules. Since $\widehat{A}_{\hbar}$ is flat over $A_{\hbar}$, 
\[
 \widehat{J}_{\hbar}' \backslash \widehat{A}_{\hbar} \o_{\widehat{A}_{\hbar}}^{\mathbb{L}} \widehat{M} \cong   \widehat{J}_{\hbar}' \backslash \widehat{A}_{\hbar} \o_{\widehat{A}_{\hbar}}^{\mathbb{L}} \widehat{A}_{\hbar} \o_{A_{\hbar}}^{\mathbb{L}} M \cong \widehat{J}_{\hbar}' \backslash \widehat{A}_{\hbar} \o_{A_{\hbar}}^{\mathbb{L}} M,
 \]
proving the second isomorphism. 
\end{proof}

Again, let $Q$ be any one of $A,\widehat{A}$ or $B$. Recall that if $M$ is a finite $Q_{\hbar}$-module then the \textit{grade} $j(M)$ of $M$ is the smallest $j \ge 0$ such that $\Ext^j_{Q_{\hbar}}(M,Q_{\hbar}) \neq 0$. Since $Q_{\hbar}$ is Auslander regular and $Q / \hbar Q$ is Cohen-Macaulay, 
\[
j(M) + \dim \Supp \, M = \dim \spec (Q / \hbar Q). 
\]
Moreover, by Gabber's Theorem, $\dim \Supp M \, \le (1/2) \dim \spec (Q / \hbar Q)$. Thus, 
\[
j(M) \le (1/2) \dim \spec (Q / \hbar Q)
\]
and $j(M) = (1/2) \dim \spec (Q / \hbar Q)$ if and only if $M$ is holonomic. 

%If $M$ is a finite $A_{\hbar}$-module then $M$ is holonomic if and only if $\Ext_{A_{\hbar}}^i(M,A_{\hbar}) = 0$ for all $i < (1/2) \dim \resol$. Similarly, if $M$ is a finite $B_{\hbar}$-module then $M$ is holonomic if and only if $\Ext^i_{B_{\hbar}}(M,B_{\hbar}) = 0$ for all $i < (1/2) \dim S$. We say that a finite $\widehat{A}_{\hbar}$-module $N$ is holonomic if $\Ext_{\widehat{A}_{\hbar}}^i(N,\widehat{A}_{\hbar}) = 0$ for all $i < (1/2) \dim \resol$. 

Since $\widehat{A}_{\hbar}$ is flat over $A_{\hbar}$, we have 
\[
\Ext^{i}(M,A_{\hbar})\otimes_{A_{\hbar}} \widehat{A}_{\hbar} \iso \Ext^{i}(\widehat{A}_{\hbar} \otimes_{A_{\hbar}} M,\widehat{A}_{\hbar}),
\]
which shows that if $M$ is a holonomic $A_{\hbar}$-module then $\widehat{A}_{\hbar} \otimes_{A_{\hbar}} M$ is a holonomic $\widehat{A}_{\hbar}$-module. Therefore, by Lemma~\ref{lem:AtoAhatBholo}, it suffices to prove that if $N$ is a holonomic $\widehat{A}_{\hbar}$-module then the cohomology groups 
\[
H^k\left( \widehat{J}_{\hbar}' \backslash \widehat{A}_{\hbar} \o_{\widehat{A}_{\hbar}}^{\mathbb{L}} N \right)
\]
are holonomic $B_{\hbar}$-modules. 

Choose local coordinates $\{x_{1},\dots,x_{n}\}$ for $V$. Then the
algebra $(\widehat{\mathcal{D}}_{\hbar})^{\mathbb{G}_{m}}$ can be
explicitly described as follows: let $\deg (\partial_{i})=a_{i} > 0$
and $\deg (x_{i})=-b_{i} < 0$, so that $a_{i}-b_{i}= \ell > 0$ because $[\partial_i,x_i] = \hbar^{\ell}$. Let $A=(a_{1},\dots,a_{n})$
and $B=(b_{1},\dots b_{n})$ be the associated multi-indices. Then
$(\widehat{\mathcal{D}}_{\hbar})^{\mathbb{G}_{m}}$ is the collection
of sums 
\[
\sum_{I,K}a_{I,K}(\hbar^{|B|}x)^{I}(\hbar^{-|A|}\partial)^{K}=\sum_{I,K}a_{I,K}\hbar^{|B\cdot I|-|A\cdot K|}x^{I}\partial^{K},
\]
where $I,K$ are multi-indices, $a_{I,K}\in\C$, and we set
$|B\cdot I|={\displaystyle \sum_{j=1}^{n}b_{j}i_{j}}$, and similarly
for $|A\cdot K|$. Relabeling $\hbar^{b_{i}}x_{i}$ as $x_{i}$ and $\hbar^{-a_{i}}\partial_{i}$
as $\partial_{i}$, we see that $(\widehat{\mathcal{D}}_{\hbar})^{\mathbb{G}_{m}}$
is the algebra of sums of the form 
\[
\sum_{I,K}a_{I,K}x^{I}\partial^{K}
\]
where the sum ranges over $\{I,K\}$ so that $|B\cdot I|-|A\cdot K|\geq M$
for some $M\in\mathbb{Z}$. 

Choosing coordinates on $S$, we have that $(B_{\hbar})^{\mathbb{G}_{m}}$
is the usual Weyl algebra 
\[
\C \langle y_{1},\dots y_{m},\partial_{1},\dots,\partial_{m} \rangle
\]
Thus $(\widehat{A}_{\hbar})^{\mathbb{G}_{m}}$ is isomorphic to the
algebra $\widehat{D}_{n,m}$, which is explicitly given as the algebra
of formal sums of the form 
\[
\sum_{I,J,K,L}a_{I,J,K,L}x^{I}y^{J}\partial_{x}^{K}\partial_{y}^{L}
\]
where $a_{I,J,K,L}\in\mathbb{C}$ and the sum ranges over $\{I,K\}$
so that $|B\cdot I|-|A\cdot K|\geq M$ for some $M\in\mathbb{Z}$ and $J,L \ge 0$. Good modules over $\widehat{A}_{\hbar}$ are equivalent to finite
modules over $\widehat{D}_{n,m}$. 

Now, let us consider the $A$-weighted $V$-filtration of $\widehat{D}_{n,m}$
along $\{x_{1},\dots,x_{n}\}$; explicitly, we set 
\[
V_{t}(\widehat{D}_{n,m})=\left\{\sum_{I,J,K,L}a_{I,J,K,L}x^{I}y^{J}\partial_{x}^{K}\partial_{y}^{L}:|A\cdot K|-|A\cdot I|\leq t \right\}, \quad \forall \, t \in \Z. 
\]
This is clearly a separated filtration of $\widehat{D}_{n,m}$, let
us show it is also exhaustive: if 
\[ 
P={\displaystyle \sum_{I,J,K,L}a_{I,J,K,L}x^{I}y^{J}\partial_{x}^{K}\partial_{y}^{L}}
\]
is an arbitrary element of $\widehat{D}_{n,m}$, then there is $M\in\mathbb{Z}$
so that 
\[
|B\cdot I|-|A\cdot K|\geq M
\]
i.e., 
\[
|A\cdot K|\leq|B\cdot I|-M\leq|A\cdot I|-M,
\]
where we used that $A - B = (\ell,\dots, \ell)$. Then  
\[
%|K|-|I|\leq|A\cdot K|-|A\cdot I|\leq-M
|A\cdot K|-|A\cdot I|\leq-M
\]
and so we have $P\in V_{-M}(\widehat{D}_{n,m})$. 

Denote by $\widehat{D}_{n,m,V}$ the completion of $\widehat{D}_{n,m}$
along this $V$-filtration. By part (1) of Lemma~\ref{lem:ReesV} below, the algebra $\widehat{D}_{n,m,V}$ is flat over $\widehat{D}_{n,m}$. Furthermore,
by part (2) of Lemma~\ref{lem:ReesV}, we have $\text{gr}(\widehat{D}_{n,m,V}) \cong D_{n+m}$, a Weyl algebra which is a ring of global dimension $n+m$. Therefore, by \cite[Theorem~3.4]{HO},
$\widehat{D}_{n,m,V}$ has global dimension $\leq n+m$;
by considering the module $\mathbb{C}[y_{1},\dots y_{m}][[x_{1},\dots,x_{n}]]$
we see that the global dimension is exactly $n+m$. We therefore have
a notion of holonomic module over $\widehat{D}_{n,m,V}$, namely those
modules $M$ for which $\text{Ext}^{i}(M,\widehat{D}_{n,m,V})=0$ for $i<n+m$;
i.e., the grade number $j(M)$ is $n+m$.

Now, let $N$ be a holonomic module over $\widehat{D}_{n,m}$, equipped
with a good filtration with respect to $V$. Then we claim that $\widehat{D}_{n,m,V}\otimes_{\widehat{D}_{n,m}}N \cong \widehat{N}_V$
is holonomic over $\widehat{D}_{n,m,V}$, where $\widehat{N}_V$ is the completion of $N$ with respect to the good filtration. Indeed, using the flatness
of $\widehat{D}_{n,m,V}$ over $\widehat{D}_{n,m}$, we have 
\[
\Ext^{i}(N,\widehat{D}_{n,m})\otimes_{\widehat{D}_{n,m}}\widehat{D}_{n,m,V} \iso \Ext^{i}(\widehat{D}_{n,m,V}\otimes_{\widehat{D}_{n,m}}N,\widehat{D}_{n,m,V})
\]
from which the result follows. 

Recall that, by the results of\footnote{Here, we use the fact that $\widehat{D}_{n,m,V}$ is a Zariskian ring;
this is why we had to complete along $V$.} \cite[Chapter~3, Section~2.5]{HO}, we have that $j(M)=j(\text{gr}(M))$
for any module $\widehat{D}_{n,m,V}$-module $M$ equipped with a
good filtration. In particular, $\text{gr}(M)$ is a holonomic $D_{n+m}$-module
in the usual sense. Therefore, if $N$ is a holonomic module over
$\widehat{D}_{n,m}$, equipped with a good filtration with respect
to $V$, then $\text{gr}(N)=\text{gr}(\widehat{N}_V)$ is holonomic
over $D_{n+m}$. 

Let $\mathcal{R}(\widehat{D}_{n,m})$ be the Rees algebra of $\widehat{D}_{n,m}$
with respect to the $V$-filtration; denote the Rees parameter by
$T$. Let $\mathcal{R}(D_{m})=D_{m}[T]$ denote the Rees algebra of
$D_{m}$ with respect to the trivial filtration where all generators have degree zero. Let $\hat{x}_i = T^{-a_i} x_i \in \mathcal{R}(\widehat{D}_{n,m})_{-a_i}$. Then we can define
a $(\mathcal{R}(D_{m}),\mathcal{R}(\widehat{D}_{n,m}))$-bimodule,
denoted $i^{!}(\mathcal{R}(\widehat{D}_{n,m}))$, via
\[
i^{!}(\mathcal{R}(\widehat{D}_{n,m}))=(\hat{x}_{1},\dots, \hat{x}_{n})\cdot\mathcal{R}(\widehat{D}_{n,m})\backslash\mathcal{R}(\widehat{D}_{n,m})
\]
and we define 
\[
i^{!}M:=i^{!}(\mathcal{R}(\widehat{D}_{n,m}))\otimes_{\mathcal{R}(\widehat{D}_{n,m})}^{\L}M
\]
for any $\mathcal{R}(\widehat{D}_{n,m})$-module $M$. One verifies
readily that 
\[
i^{!}(M)\otimes_{\mathbb{C}[T]}^{\L}(\mathbb{C}[T]/T)\cong i_{0}^{!}(M\otimes_{\mathbb{C}[T]}^{\L}(\mathbb{C}[T]/T))
\]
where $i_{0}^{!}:D^{b}(\LMod{D_{n+m}})\to D^{b}(\LMod{D_{m}})$
is the usual pullback functor; and 
\[
i^{!}(M)\otimes_{\mathbb{C}[T]}^{\L}(\mathbb{C}[T]/(T-1))\cong i_{1}^{!}(M\otimes_{\mathbb{C}[T]}^{\L}(\mathbb{C}[T]/(T-1)))
\]
where $i_{1}^{!} \colon D^{b}(\LMod{\widehat{D}_{n+m}})\to D^{b}(\LMod{D_{m}})$ is the pullback we are considering in this proof. Suppose $N$ is
holonomic over $\widehat{D}_{n,m}$. Equip it with a good filtration
and set $M=\mathcal{R}(N)$. Noting that $M$ is torsion-free over $T$,  
\[
M\otimes_{\mathbb{C}[T]}^{\L}(\mathbb{C}[T]/T)=\text{gr}(N)
\]
is holonomic over $D_{n+m}$, and so 
\[
H^{k}(i^{!}(M)\otimes_{\mathbb{C}[T]}^{\L}(\mathbb{C}[T]/T)) = H^k(i_0^! (\gr (N)))
\]
is holonomic over $D_{m}$, for all $k$. Since $\C[T] / (T)$ is resolved over $\C[T]$ by a two-term complex, there is a two-step filtration of $H^{k}(i^{!}(M)\otimes_{\mathbb{C}[T]}^{\L}(\mathbb{C}[T]/T))$ coming from the filtration of the associated bicomplex. The first term of this filtration is $H^{k}(i^{!}M)/T$. Therefore, there is a canonical
injection 
\[
H^{k}(i^{!}M)/T\to H^{k}(i^{!}(M)\otimes_{\mathbb{C}[T]}^{\L}(\mathbb{C}[T]/T)),
\]
which implies that $H^{k}(i^{!}M)/T$ is holonomic over $D_{m}$. On the
other hand, we have that 
\[
M\otimes_{\mathbb{C}[T]}^{\L}(\mathbb{C}[T]/(T-1))\cong N
\]
and 
\[
H^{k}(i^{!}M\otimes_{\mathbb{C}[T]}^{\L}(\mathbb{C}[T]/(T-1))\cong H^{k}(i^{!}M)/(T-1)\cong H^{k}(i_{1}^{!}N)
\]
Thus, the filtered module $H^{k}(i_{1}^{!}N)$ has an associated graded
which is holonomic over $D_{m}$. As holonomic modules are finite
length, we see that this filtration has $V_{s}(H^{k}(i_{1}^{!}N))\subsetneq V_{s+1}(H^{k}(i_{1}^{!}N))$ for only finitely many degrees $s$. We now claim that this filtration is bounded below. In fact, from the definition of $i^{!}M$, we can
compute it by choosing a finite projective resolution $\mathcal{P}^{\idot}$
of $M$ (in the category of graded $\mathcal{R}(\widehat{D}_{n,m})$-modules)
and then looking at the complex $\mathcal{P}^{\idot}/(\hat{x}_{1},\dots ,\hat{x}_{n})\mathcal{P}^{\idot}$. But each term $\mathcal{P}^{j}/(\hat{x}_{1},\dots, \hat{x}_{n})\mathcal{P}^{j}$ has bounded below grading; to see this it suffices to show that $\mathcal{R}(\widehat{D}_{n,m})/(\hat{x}_{1},\dots,\hat{x}_{n})\mathcal{R}(\widehat{D}_{n,m})$
has bounded below grading; and indeed its grading is bounded below
by $-|A|$ (compare the proof of the lemma below). 

Therefore $H^{k}(i_{1}^{!}N)$ possesses a finite filtration by $D_{m}$-modules
whose associated graded is holonomic over $D_{m}$. Hence $H^{k}(i_{1}^{!}N)$
is itself holonomic over $D_{m}$; this is what we wanted to prove. 

In the course of the above proof we needed the following results. 

\begin{lem}\label{lem:ReesV}
\begin{enumerate}
    \item[1)] The ring $V_{0}(\widehat{D}_{n,m})$ is (left) Noetherian. The
Rees ring $\mathcal{R}_{V}(\widehat{D}_{n,m})$ is (left) Noetherian
in the graded sense. 
\item[2)] We have $\gr_{V}(\widehat{D}_{n,m}) \cong D_{n+m}$, equipped
with the grading satisfying $\deg (x_{i})=-a_{i}$, $\deg (\partial_{x_{i}})=a_{i}$,
and $\deg (y_{i})=\deg (\partial_{y_{i}})=0$. 
\end{enumerate}
\end{lem}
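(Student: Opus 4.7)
My plan is to prove part (2) first by direct computation, then use it together with the Rees-ring/complete-filtered-ring machinery to deduce part (1).

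For part (2), I would verify that the generators lie in the claimed filtration pieces: $x_i \in V_{-a_i}\setminus V_{-a_i-1}$, $\partial_{x_i} \in V_{a_i}\setminus V_{a_i-1}$, and $y_j,\partial_{y_j} \in V_0\setminus V_{-1}$, giving the required degrees of their images in $\gr_V$. The Weyl commutators $[\partial_{x_i},x_i]=1$ and $[\partial_{y_j},y_j]=1$ hold in $\widehat{D}_{n,m}$; since both summands of each commutator are already in $V_0$ (total $V$-degree $a_i-a_i=0$), these relations survive as nontrivial relations in the degree-zero piece of $\gr_V$, while the vanishing commutators survive trivially. This yields a surjective graded homomorphism $D_{n+m} \twoheadrightarrow \gr_V\widehat{D}_{n,m}$ with $D_{n+m}$ carrying the stated grading. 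Injectivity follows from the PBW structure of $\widehat{D}_{n,m}$: the monomials $x^I y^J \partial_x^K\partial_y^L$ with $|A\cdot K|-|A\cdot I|=t$ are linearly independent and span $V_t/V_{t-1}$, matching the standard basis of $(D_{n+m})_t$.

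For part (1), I would introduce the Rees ring $\mathcal{R}_V(\widehat{D}_{n,m}) = \bigoplus_{t\in\Z} V_t T^t$, graded with $\deg T = 1$, so that $\mathcal{R}_V/T\mathcal{R}_V \cong \gr_V\widehat{D}_{n,m} \cong D_{n+m}$ by part (2); this quotient is Noetherian (and graded Noetherian), since the Weyl algebra is. Graded Noetherianness of $\mathcal{R}_V$ then follows from the standard lifting theorem for complete filtered rings (see \cite{HO}, Chapter III): if $\mathcal{R}_V$ is $T$-adically complete in the graded sense and $\mathcal{R}_V/T\mathcal{R}_V$ is graded Noetherian, then so is $\mathcal{R}_V$. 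Noetherianness of $V_0 = (\mathcal{R}_V)_0$ then follows formally: any left ideal $I \subset V_0$ generates a graded left ideal $\mathcal{R}_V\cdot I$ of $\mathcal{R}_V$, and a finite generating set can be chosen inside $I$, yielding finite generation of $I$ over $V_0$.

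The main obstacle I anticipate is verifying $T$-adic completeness of $\mathcal{R}_V$, equivalently completeness of each $V_t$ along its decreasing filtration $V_t\supset V_{t-1}\supset\cdots$. A $V$-adic Cauchy sequence may accumulate terms with $|I|$ growing unboundedly, and the uniform formal-sum condition $|B\cdot I|-|A\cdot K|\ge M$ defining $\widehat{D}_{n,m}$ may then fail in the limit; this is exactly why the authors later introduce the further completion $\widehat{D}_{n,m,V}$. A workable fix is to pass to the completed Rees ring $\widehat{\mathcal{R}}_V$: by part (2) its quotient modulo $T$ is still $D_{n+m}$, it is manifestly $T$-adically complete and hence graded Noetherian, and one then argues that graded Noetherianness descends to $\mathcal{R}_V$ and to $V_0$. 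The compatibility $A-B=(\ell,\ldots,\ell)$ with $\ell>0$ plays the essential role: it bounds $|I|$ and $|K|$ within each bounded $V$-slice (when combined with the defining constraint of $\widehat{D}_{n,m}$), which is what makes the descent step run.
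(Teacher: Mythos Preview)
Your approach to part (2) is in the right spirit and close to the paper's, but you skip the one nontrivial step: showing that the monomials $x^Iy^J\partial_x^K\partial_y^L$ with $|A\cdot K|-|A\cdot I|=t$ \emph{span} $V_t/V_{t-1}$. Elements of $\widehat{D}_{n,m}$ are infinite formal sums, so you must argue that any $P\in V_t$ has only finitely many terms with $|A\cdot K|-|A\cdot I|=t$, the rest lying in $V_{t-1}$. The paper does exactly this: from $|B\cdot I|-|A\cdot K|\geq M$ one gets $|A\cdot K|-|A\cdot I|\leq |(B-A)\cdot I|-M$, and since $B-A$ has strictly negative entries this forces $|A\cdot K|-|A\cdot I|\to -\infty$ as $|I|\to\infty$. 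You allude to this mechanism only later, in your discussion of part (1); it belongs here.

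Your approach to part (1) has a genuine gap. You correctly identify that $\mathcal{R}_V$ is \emph{not} $T$-adically complete, so the lifting theorem from \cite{HO} does not apply directly. But your proposed fix---pass to the $T$-adic completion $\widehat{\mathcal{R}}_V$, prove that is graded Noetherian, then ``descend''---does not work as stated: subrings of Noetherian rings need not be Noetherian, and you give no mechanism (such as faithful flatness, which would itself require input) for the descent. The paper avoids this entirely by exploiting a \emph{different} completeness. It passes from $\widehat{D}_{n,m}$ back to the $\hbar$-algebra $\widehat{A}$ (so that $V_0(\widehat{D}_{n,m})=(V_0(\widehat{A})_\hbar)^{\mathbb{G}_m}$), observes that $V_0(\widehat{A})$ is $\hbar$-torsion-free and $\hbar$-adically complete, and thereby reduces to showing that $V_0(\widehat{A})/\hbar$ is Noetherian. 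This is now a purely commutative question: $V_0(\widehat{A})/\hbar$ is identified with the $\mathcal{I}$-adic completion of $R_{\leq 0}$, where $R$ is a polynomial ring with a $\mathbb{G}_m$-grading and $R_{\leq 0}\cong R[t]^{\mathbb{G}_m}$ is finitely generated over $\mathbb{C}$. The Rees ring is handled the same way, via $\hbar$-adic completeness of each graded piece and the complete Nakayama lemma. The moral is that the $V$-direction is the wrong axis along which to seek completeness; the $\hbar$-direction is the right one.
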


\begin{proof}
We start with part $1)$. We can consider the $A$-weighted $V$-filtration
on $\widehat{A}=\widehat{\mathcal{D}}\widehat{\otimes}B$, defined
in the analogous way to the above; then $V_{0}(\widehat{D}_{n,m})=(V_{0}(\widehat{A})_{\hbar})^{\mathbb{G}_{m}}$.
So it suffices to show that $V_{0}(\widehat{A})$ is Noetherian. As
$V_{0}(\widehat{A})$ is $\hbar$-torsion-free and $\hbar$-adically complete,
it suffices to show that $V_{0}(\widehat{A})/\hbar$ is Noetherian (see e.g., \cite[Theorem~1.2.5]{KSDQ} for a more general statement). 

To see this, we start by noting that $\widehat{A}/\hbar$ is the completion of the polynomial ring 
\[
R=\mathbb{C}[x_{1},\dots,x_{n},y_{1},\dots,y_{m},\xi_{x_{1}},\dots,\xi_{x_{n}},\xi_{y_{1}},\dots,\xi_{y_{m}}]
\]
 along the $(x_{1},\dots,x_{n})$-adic filtration. This polynomial
ring is equipped with an algebraic $\mathbb{G}_{m}$ action corresponding
to the grading where $\text{deg}(x_{i})=-b_{i}$, $\text{deg}(\xi_{x_{i}})=a_{i}$,
and $\text{deg}(y_{j})=\text{deg}(\xi_{y_{j}})=0$. Therefore $\widehat{A}/\hbar$
is equipped with the corresponding topological $\mathbb{G}_{m}$-action,
and $V_{0}(\widehat{A})/ \hbar \subset\widehat{A}/\hbar$ is the subring consisting
of convergent sums of elements of weight $\leq 0$. 

Now, the basic theory of $\mathbb{G}_{m}$-actions on finitely generated
algebras ensures that the ring $R_{0}$ of degree $0$ elements is
finitely generated over $\mathbb{C}$, each $R_{0}$-module $R_{i}$
is finitely generated. The ring $R_{\leq 0}$ of elements of degree
$\leq 0$ is also finitely generated over $\C$ since it can be identified with $R[t]^{\Gm}$, where $t$ has weight one. We now claim that $V_{0}(\widehat{A})/\hbar$ is the completion of $R_{\leq 0}$
along the $\mathcal{I}$-adic filtration, where $\mathcal{I}=R_{\leq 0}\cap(x_{1},\dots,x_{n})$;
this proves that $V_{0}(\widehat{A})/\hbar$ is Noetherian as needed.

To see this note that there is an obvious map
\[
R_{\leq0}\to V_{0}(\widehat{A})/\hbar
\]
which extends to map from the $\mathcal{I}$-adic completion $\widehat{R}_{\leq0}$,
to $V_{0}(\widehat{A})/\hbar$. It is straightforward to see that this
map is injective. To see the subjectivity, consider a term 
\[
P=\sum_{I,J,K,L}a_{I,J,K,L}x^{I}y^{J}\xi_{x}^{K}\xi_{y}^{L}\in V_{0}(\widehat{A})/\hbar
\]
Choose some $m>0$. We claim that the set of terms $a_{I,J,K,L}x^{I}y^{J}\xi_{x}^{K}\xi_{y}^{L}$
in $P$ which are not in $\mathcal{I}^{m}\cdot R_{\leq 0}$, is finite.
If this is so, then we let $P_{m}$ be the (finite) sum of the terms
in $P$ which are in $\mathcal{I}^{m}\cdot R_{\leq 0}$ but not in
$\mathcal{I}{}^{m+1}\cdot R_{\leq 0}$. We can write 
\[
P=\sum_{m=0}^{\infty}P_{m}
\]
which expresses $P$ as an element of $\widehat{R}_{\leq0}$. 

To prove the claim, observe that $a_{I,J,K,L}x^{I}y^{J}\xi_{x}^{K}\xi_{y}^{L}$
is contained in $\mathcal{I}^{m}\cdot R_{\leq0}$ whenever $|I|\geq|A|\cdot m$.
So if this term in not in $\mathcal{I}^{m}\cdot R_{\leq0}$, we must
have $|I|<|A|\cdot m$. Since $P\in V_{0}(\widehat{A})/ \hbar$, we have
$|A\cdot K|\leq|A\cdot I|<|A|^{2}m$ so there are only finitely many
possible $I$ and $K$, which is what we needed. 

To deduce the statement about the Rees ring, we again work with $\widehat{A}$
instead of $\widehat{D}_{n,m}$. Note that $\mathcal{R}_{V}(\widehat{A})/\hbar \iso \mathcal{R}_{V}(\widehat{A}/\hbar)$.
It follows from the previous part that the $i$th graded piece $\mathcal{R}_{V}(\widehat{A}/\hbar)_{i}$
is the completion of $R_{\leq i}$ along the $\mathcal{I}$-adic filtration
(as an $R_{\leq0}$-module). Therefore each such module is finitely
generated and the entire algebra is finitely generated over $\widehat{R}_{\leq0}$
(as these facts are true for the algebra $R$ before completion);
and therefore Noetherian. But now the analogous statements for $\mathcal{R}_{V}(\widehat{A})$ follow from the complete Nakayama lemma, using the fact that each
graded piece $\mathcal{R}_{V}(\widehat{A})_{i}$ is $\hbar$-adically
complete. 

2) There is clearly a map from the graded ring $D_{n+m}$ to $\widehat{D}_{n,m}$,
which respects the filtrations on both sides. So we have to show that
\[
V_{i}(D_{n+m})/V_{i-1}(D_{n+m})\tilde{\to}V_{i}(\widehat{D}_{n,m})/V_{i-1}(\widehat{D}_{n,m}).
\]
Let 
\[
P=\sum_{I,J,K,L}a_{I,J,K,L}x^{I}y^{J}\partial_{x}^{K}\partial_{y}^{L}\in V_{i}(\widehat{D}_{n,m}).
\]
Then, for all terms $a_{I,J,K,L}x^{I}y^{J}\partial_{x}^{K}\partial_{y}^{L}$
that occur in $P$, we have $|B\cdot I|-|A\cdot K|\geq M$ for some
fixed $M\in\mathbb{Z}$. This forces 
\[
|A\cdot K|\leq|B\cdot I|-M
\]
so that 
\[
|A\cdot K|-|A\cdot I|\leq|(B-A)I|-M.
\]
As $B-A$ is a multi-index with strictly negative values, we see that
as $I$ increases we must have $|A\cdot K|-|A\cdot I|$ decreases.
Therefore, all but finitely many terms in $P$ are contained in $V_{i-1}(\widehat{D}_{n,m})$,
and the quotient consists of finite sums 
\[
\sum_{I,J,K,L}a_{I,J,K,L}x^{I}y^{J}\partial_{x}^{K}\partial_{y}^{L}
\]
with $|A\cdot K|-|A\cdot I|=i$, which is exactly $V_{i}(D_{n+m})/V_{i-1}(D_{n+m})$
as required. 
\end{proof}

\subsection{Extensions of Modules}\label{sec:externsionsympresmod}

Now we wish to specialize the situation: we suppose from here on in that $\resol$ is a bionic symplectic resolution of singularities. %\ff{Is there any way to avoid this???  There should be some relevant applications...}
Thus we have a proper birational $\mathsf{T}$-equivariant morphism  $\mu: \resol \to X:= \resol^{\on{aff}}$, where $X$ is an affine symplectic variety, conic with respect to the elliptic action. Fix $r \in I$ and let $U(r)$ be an affine $\mathsf{T}$-stable open subset of $\resol$ containing the fixed point $y_r$. Then $U(r)$ contains both $\Lag_r$ and $C_r$. Let $\delta_{\Lag_r}$ denote the holonomic simple module on $U(r)$ constructed in Proposition~\ref{prop:uniquesimple}.

\begin{lemma}\label{lem:deltaboundedweights}
The weights of the  Hamiltonian $\Gm$-action on $\Gamma(U(r),\delta_{\Lag_r})^{\Gm}$ are bounded above.% by $0$. 
\end{lemma}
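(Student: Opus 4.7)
The plan is to reduce to the classical limit via a $\mathsf{T}$-equivariant good lattice and then to exploit the fact that the Hamiltonian $\Gm$ contracts $\Lag_r$ onto $y_r$ as $s\to\infty$.

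First I would choose a $\mathsf{T}$-equivariant good $\sA$-lattice $\mc{L}\subseteq\delta_{\Lag_r}$ on $U(r)$; such a lattice exists since $\delta_{\Lag_r}$ is a good $\mathsf{T}$-equivariant $\cW$-module, and one may take $\mc{L}=\sA\cdot v$ for any $\mathsf{T}$-eigenvector generator $v\in\delta_{\Lag_r}$. The quotient $\mc{L}/\hbar\mc{L}$ is then a coherent $\mathsf{T}$-equivariant $\mc{O}_{U(r)}$-module, set-theoretically supported on $\Lag_r\cap U(r)$. Since $\hbar$ has Hamiltonian weight $0$, the Hamiltonian weight spectrum of $\Gamma(U(r),\mc{L})$ is contained in that of the associated graded module $\bigoplus_n \hbar^n\mc{L}/\hbar^{n+1}\mc{L}$, which agrees with the spectrum of $\Gamma(U(r),\mc{L}/\hbar\mc{L})$. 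Inverting $\hbar$ preserves Hamiltonian weights, and passing to $\Gm$-invariants only restricts to a subspace. Hence it is enough to prove that the Hamiltonian weights appearing in $\Gamma(U(r),\mc{L}/\hbar\mc{L})$ are bounded above.

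For the second step I would appeal to the defining property of $\Lag$: by construction the limit $\lim_{s\to\infty}\mathsf{H}(s)\cdot x$ exists for every $x\in\Lag_r$, and since $y_r$ is (after shrinking $U(r)$ if necessary) the unique $\mathsf{T}$-fixed point in $U(r)$ meeting the closure of $\Lag_r\cap U(r)$, the Hamiltonian $\Gm$ contracts $\Lag_r\cap U(r)$ onto $y_r$. A Bia\l ynicki-Birula-type analysis near $y_r$, analogous to the treatment of the coisotropic cells in Section~\ref{sec:elliptic}, identifies $\mc{O}(\Lag_r\cap U(r))$ with a graded polynomial ring whose generators are $\mathsf{T}$-eigenvectors with Hamiltonian weights of a definite sign, giving a Hamiltonian grading that is bounded above in the paper's convention. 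Since $\mc{L}/\hbar\mc{L}$ is coherent over $\mc{O}_{\Lag_r\cap U(r)}$ and $\mathsf{T}$-equivariantly generated by finitely many eigenvectors (each of some fixed Hamiltonian weight), $\Gamma(U(r),\mc{L}/\hbar\mc{L})$ inherits the upper bound.

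The main obstacle is verifying the direction of the bound, i.e.\ the sign of the Hamiltonian weights on the generators of $\mc{O}(\Lag_r\cap U(r))$. This reduces to a linear-algebra computation on the tangent space $T_{y_r}\Lag_r$ and is controlled by the fact that $\Lag_r$ is the Hamiltonian-attracting Lagrangian cell at $y_r$ for $s\to\infty$; the shift introduced by the choice of lattice $\mc{L}$ is a fixed $\mathsf{T}$-character (compatible with the uniqueness up to Hamiltonian twist in Proposition~\ref{prop:uniquesimple}) and does not affect the boundedness conclusion. Alternatively, one could run the same argument after first applying the $\mathsf{T}$-equivariant Hamiltonian reduction $\Ham_r$, which converts the problem into an explicit weight computation for the simple module on the symplectic vector space $S_r$ and makes the sign transparent.
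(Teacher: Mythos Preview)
Your proof is correct and takes essentially the same approach as the paper: reduce to the classical limit (your $\mc{L}/\hbar\mc{L}$, the paper's $\gr M$ for a good filtration on $M=\Gamma(U(r),\delta_{\Lag_r})^{\Gm}$), observe that this is a coherent $\mc{O}_{U(r)}$-module supported on $\Lag_r$, and then use that the Hamiltonian action contracts $\Lag_r$ to $y_r$ to bound the weights. The paper phrases the last step by pointing back to the argument in Proposition~\ref{prop:uniquesimple}, which is exactly your suggested alternative of passing through the Hamiltonian reduction; your caution about the sign is well placed, and the paper simply asserts the direction.
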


\begin{proof}
By definition, $\delta_{\Lag_r}$ is supported on the attracting locus of the Hamiltonian $\Gm$-action on $U(r)$. Therefore, one can argue just as in the proof of Proposition~\ref{prop:uniquesimple}. Namely, $M := \Gamma(U(r),\delta_{\Lag_i})^{\Gm}$ has a good filtration whose associated graded is a $\Gamma(U(r),\mc{O})$-module supported on $\Lag_r$. It follows that the weights of $\gr M$ are bounded above. The same must therefore hold for $M$. 
\end{proof}

Then we have: 

\begin{prop}\label{simple extension prop}
The $\cW_{U(r)}$-module $\delta_{\Lag_r}$ admits an extension to a holonomic $\cW_{\resol}$-module which is set-theoretically supported on $\Lag$.

%be any holonomic $\cW_U$-module which is set-theoretically supported on $\Lag_i$. Then $M$ admits an extension to a holonomic $\cW$-module which is set-theoretically supported on $\Lag$.
\end{prop}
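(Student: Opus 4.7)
The plan is to realise the desired extension as $\cL_r := \mathcal{H}^0(\R j_* \delta_{\Lag_r})$ for an open immersion $j \colon U_0 \hookrightarrow \resol$ chosen so that Theorem~\ref{thm:holonomicpushforward} applies. Concretely, fix a refinement of the closure partial order on coisotropic cells and set $U_0 := \resol \setminus K_{>r}$, so that both $U_0$ and $\resol$ are unions of coisotropic cells and $C_r \subset U_0$. Proposition~\ref{prop:uniquesimple} produces the simple holonomic $\cW_{U_0}$-module $\delta_{\Lag_r}$ on $U_0$, whose restriction to the given $U(r)$ agrees with the original $\delta_{\Lag_r}$ there, up to a Hamiltonian character twist that can be harmlessly absorbed at the end.

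With this in place, the construction is quick. Since $U_0$ and $\resol$ are unions of coisotropic cells, Theorem~\ref{thm:holonomicpushforward} yields $\R j_* \delta_{\Lag_r} \in D^b_{\hol}(\QCoh{\cW_{\resol}})$, so that every cohomology sheaf is a holonomic (in particular good) $\cW_{\resol}$-module. Set $\cL_r := \mathcal{H}^0(\R j_* \delta_{\Lag_r})$. Because $j^*$ is exact, it commutes with $\mathcal{H}^0$, and $j^* \R j_* = \Id$ by recollement, so $j^* \cL_r = \mathcal{H}^0(\delta_{\Lag_r}) = \delta_{\Lag_r}$ on $U_0$; restricting further to $U(r) \subset U_0$ and absorbing the twist recovers the original $\delta_{\Lag_r}$. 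For the support, $\delta_{\Lag_r}$ is supported on $\Lag_r$, which is closed in $U_0$; hence $\R j_* \delta_{\Lag_r}$, and in particular $\cL_r$, is supported in the Zariski closure $\overline{\Lag_r}^{\resol}$ in $\resol$. Since $\Lag$ is a closed subvariety of $\resol$ containing $\Lag_r$, that closure lies inside $\Lag$, and hence $\cL_r$ is set-theoretically supported on $\Lag$, as required.

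The decisive input is Theorem~\ref{thm:holonomicpushforward}, which controls $\R j_*$ on holonomic complexes; once that is at hand, the proposition reduces to the essentially formal recollement calculation above. A mild preparatory issue is arranging that $U_0$ be simultaneously a union of coisotropic cells and a neighbourhood of $C_r$, handled by the explicit choice $U_0 = \resol \setminus K_{>r}$. The weight-boundedness from Lemma~\ref{lem:deltaboundedweights} is not invoked here; I expect it to enter the subsequent analysis of $\bigoplus_r \cL_r$ as a local generator, for instance in controlling its dg endomorphism algebra weight by weight.
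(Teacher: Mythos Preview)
Your argument has a genuine gap at the support step. You assert that $\R j_*\delta_{\Lag_r}$, hence $\cL_r=\mathcal{H}^0(\R j_*\delta_{\Lag_r})$, is supported in $\overline{\Lag_r}^{\resol}$. This is false already in the Kleinian example (Example~\ref{exmp:typeAholo}): there the \emph{simple} extension $\mc L_i$ of $\delta_{\Lag_i}$ is supported on $\bigcup_{j\le i}\overline{\Lag_j}$, a union of several irreducible components of $\Lag$, not on the single component $\overline{\Lag_i}$. Since the simple extension always embeds in $j_*\delta_{\Lag_i}$, the support of your $\cL_r$ must contain this larger set. The heuristic ``pushforward has support in the closure'' is correct for the naive sheaf pushforward $j_{\idot}$, but the functor $j_*$ used here is the categorical right adjoint on $\QCoh{\cW}$, and the paper explicitly warns (in the proof of this very proposition) that the two differ. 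In $\cW$-module language the support plays the role of the characteristic variety, which is well known to grow nontrivially under open pushforward. Note also that your argument uses nothing specific to symplectic resolutions; if it worked, it would prove $\Lambda=\Lag$ for every bionic variety, contradicting the paper's framing in the introduction.

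The paper's proof is genuinely different and makes essential use of both the symplectic resolution hypothesis and Lemma~\ref{lem:deltaboundedweights}, which you set aside. One first twists so that localization holds, takes a coherent holonomic extension $\ms{M}\subset j_*\delta_{\Lag_r}$, and then passes to global sections: $\Gamma(\resol,\ms{M})^{\Gm}$ is a simple module over $\Gamma(\resol,\cW)^{\Gm}$, and the weight bound from Lemma~\ref{lem:deltaboundedweights} forces it to be supported on the attracting locus $\mathfrak{n}\subset X$ for the Hamiltonian action. Localizing back and using $\mu^{-1}(\mathfrak{n})=\Lag$ (properness of $\mu$) then pins the support of $\ms{M}$ inside $\Lag$. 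The weight argument is not auxiliary bookkeeping for later sections; it is precisely what replaces the failed closure-of-support step.
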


\begin{proof}
Since we have assumed that $\resol$ is a symplectic resolution, it is known \cite{BLPWAst} that $\Good{\cW}$ is equivalent to $\Good{\cW'}$  where $\cW'$ is a quantization for which localization holds. Furthermore, this equivalence is given by twisting by a line bundle; so that the object $\delta_{\Lag_i}$ constructed above is sent to the corresponding module $\delta'_{\Lag_i}$ over $\cW'$. In addition, it follows that this twisting preserves the support of a coherent module. Thus to prove the proposition we can assume that localization holds for $\cW$.%\ff{I don't think we really need this...check! --TAN. I think we do and I've clarified where below- Chris} 

Now, let $j \colon U(r) \to \resol$ be the inclusion. By \cite[Corollary~3.33]{BDMN}, there is a  coherent holonomic subsheaf $\ms{M}$ of $j_* \delta_{\Lag_i}$ whose restriction to $U$ is equal to $\delta_{\Lag_i}$, and which has no submodule supported on the complement of $U$ (see \cite[Proposition~3.25]{BDMN} for this statement). Now, let us momentarily denote by $j_{\idot}$ the naive push-forward of sheaves from $U$ to $\resol$ (this is, in general, different from the functor $j_{*}$ on quasicoherent $\cW$-modules). Since localization holds and $\ms{M}$ is simple, $\Gamma(\resol, \ms{M})^{\Gm}$ is a (non-zero) simple module over $\Gamma(\resol,\cW)^{\Gm}$. Therefore, the inclusion $\ms{M} \hookrightarrow \delta_{\Lag_i}$ gives an injection 
\[
\Gamma(\resol, \ms{M})^{\Gm} \to \Gamma(\resol, j_{\idot}j^* \ms{M})^{\Gm} = \Gamma(U(r), \delta_{\Lag_i})^{\Gm}.
\]
However, the weights of the Hamiltonian $\Gm$-action on the object on the right are bounded above by Lemma~\ref{lem:deltaboundedweights}. Thus the same is true of $\Gamma(\resol,\ms{M})^{\Gm}$. 

We denote the attracting set in $X$ with respect to the Hamiltonian action by $\mathfrak{n}$. Then it follows from the properness of $\mu$ that $\mu^{-1}(\mathfrak{n}) = \Lag$. Moreover, we have shown that $\Gamma(\resol,\ms{M})^{\Gm}$ is supported on $\mathfrak{n}$. But, applying the localization functor to $\Gamma(\resol,\ms{M})^{\Gm}$, it follows that $\ms{M}$ is supported on $\mu^{-1}(\mathfrak{n}) = \Lag$ and we are done.    
\end{proof}

%\begin{defn}
%We make a choice of holonomic extension $L_i$ of $\delta_{\Lag_i}$ supported on $\Lag$ for each $i$.
%\end{defn}

%\subsection{The Sheaf $\Omega$ of DG Algebras}
%Define an object $\cE = \oplus_i \mathscr{L}_i$.  Choose a locally free complex $\mathsf{E}$ of $\cW$-modules quasi-isomorphic to $\cE$; such a locally free replacement exists because $\resol$ is smooth and quasi-projective and there exist $\cW$-modules quantizing an ample sequence of line bundles on $\resol$.\ff{reference}

% We fix $\mathsf{E}$ for the remainder of the paper.  We write 
%\bd
%\Omega := \uHom^\bullet_{\cW}(\mathsf{E},\mathsf{E})^{\on{\opp}}
%\ed
%for the opposite algebra to the Hom-complex of $\mathsf{E}$ as a complex of $\cW$-modules.  
%Let $\mathsf{E}^\vee = \uHom^\bullet_{\cW}(\mathsf{E},\cW)$.  This is a good {\em right} $\cW$-%module.  
%Then:
%\begin{prop}
%$\Omega$ is a sheaf of dg algebras on $\resol$.  Moreover:
%\begin{enumerate}
%\item $\Omega \cong \mathsf{E}^\vee\otimes_{\cW} \mathsf{E}$ as sheaves of dg algebras.
%\item $\Omega$ is cohomologically supported on $\Lag$: the total cohomology sheaf $\oplus_i %\mathcal{H}^i(\Omega)$ is supported on $\Lag$.  
%\item $\mathsf{E}$ is a finite flat dg $\Omega$-module.  
%\end{enumerate} 
%\end{prop}

\begin{exmp} \label{exmp:typeAholo}

Continue with the notation of \ref{exmp:Kleinian}.  Quantizations for $X_n$ have been explicitly studied in \cite{Ku}, and using his work one sees that for a generic choice of quantization, the simple extension $\mathcal L_i$ of the unique simple module supported on the Lagrangian $\Lag_i$ will be supported on $\bigcup_{0\leq j\leq i} \overline{\Lag}_j$, and so the multiplicity of $\mathcal L = \bigoplus_{i=0}^{n} \mathcal L_i$ along $C_i$ is $n-i+1$. For special quantizations however, there may be different multiplicities. Note that one could chose other extensions of the simple module on $C_i$: for example the sheaves corresponding to standard modules in category $\mathcal O$ for the associated rational Cherednik algebra. In the generic case these coincide with the simple extensions, and their supports are independent of the choice of equivariant quantization.

\end{exmp} 

\section{Koszul Duality and local generators}

We begin by recalling that $\Good{\cW}$ is the abelian category of $\Gm$-equivariant good $\cW$-modules and we set $\QCoh{\cW} = \Ind \Good{\cW}$.  Here, and for the remainder of the article, $\Gm$ denotes the elliptic action. We will often take $\Gm$-invariants of an $\Gm$-equivariant sheaf $\ms{F}$. Let $\resol_{\Eq}$ denote the space $\resol$, equipped with the \textit{equivariant} topology, where the open sets are the $\Gm$-stable open sets in $\resol$. Then $\ms{F}^{\Gm}$ is the sheaf on $\resol_{\Eq}$ defined by $\ms{F}^{\Gm}(U) = (\beta_{\idot} \ms{F})(U)^{\Gm}$. Here $\beta \colon \resol \to \resol_{\Eq}$ is the map of topological spaces which is the identity on points.

\subsection{$\Omega$-modules}\label{sec:Omegamodules}

Let $\mc{E}$ be a bounded complex of (finite rank) locally free good right $\cW$-modules. Let $\mathcal E^\vee = \underline{\sHom}_{\mathcal W_\resol}(\mathcal E,\mathcal W_\resol)$, and $\Omega_{\hbar} = \underline{\sHom}_{\mathcal W_\resol}(\mathcal E,\mathcal E)$. Note that if $\mc{E}$ is a locally free resolution of an object $\mc{L} \in D^b(\Good{\cW})$ then 
\begin{equation}\label{eq:cohomologyOmega}
    \mc{H}^{\idot}(\Omega_{\hbar}) \cong \mc{E}xt_{\cW}^{\idot}(\mc{L},\mc{L}).
\end{equation}
The sheaf $\Omega_{\hbar}$ is a sheaf of dg-rings equipped with a topological $\mathbb{G}_m$-action, which possesses an invertible element of degree $1$ (namely $\hbar$ itself). We define $\Omega = \Omega_{\hbar}^{\mathbb{G}_m}$, which is a $\mathbb{C}$-linear sheaf of dg-rings on $\resol_{\Eq}$. If $\ms{N}$ is a dg $\Omega$-module then 
\[
\widetilde{\ms{N}} := \Omega_{\hbar} \o_{\beta^{-1} \Omega} \beta^{-1} \ms{N}
\]
is a $\Gm$-equivariant dg $\Omega_{\hbar}$-module. This establishes an equivalence $\Lmod{\Omega} \iso \Lmod{(\Omega_{\hbar},\Gm)}$, given by $\ms{N} \mapsto \widetilde{\ms{N}}$, between the category of coherent dg $\Omega$-modules and coherent $\Gm$-equivariant dg $\Omega_{\hbar}$-modules, with inverse $\ms{F} \mapsto \ms{F}^{\Gm}$.  

\begin{lemma}\label{lem:EoverOmega}
Let $\mathcal L$ and $\mathcal E$ be as above, with $\mc{L}$ supported on $\Lambda$. Then 

\begin{enumerate}
\item $\Omega_{\hbar} \cong \mathcal E\otimes_{\cW} \mathcal E^\vee$, as sheaves of dg-algebras.
\item Suppose that the object $\mc{L}$ is nonzero. Then $\cW \cong \mc{E}^{\vee} \o_{\Omega_{\hbar}}\mc{E}$ as dg-algebras, where the differential on $\cW$ is trivial. 
\item $\Omega$ and $\Omega_{\hbar}$ are cohomologically supported on $\Lambda$: that is, the total cohomology $\mathcal H^{\idot}(\Omega)$ is supported on $\Lambda$, and the same is true for $\Omega_{\hbar}$. 
\item $\mathcal E$ is a finite flat left dg $\Omega_{\hbar}$-module.
\end{enumerate}
\end{lemma}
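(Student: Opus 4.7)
I propose to treat the four parts in the order (1), (3), (4), (2), since (2) will use the flatness of (4) and the algebra identification of (1). The key device throughout is a local trivialization $\mc{E} \cong \bigoplus_i \cW \cdot e_i$ as a graded right $\cW$-module, which reduces the lemma to standard Morita-theoretic statements about matrix dg-algebras over $\cW$.

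Part (1) is the evaluation map $\mc{E} \otimes_{\cW} \mc{E}^\vee \to \uHom_{\cW}(\mc{E}, \mc{E})$, $e \otimes \phi \mapsto (e' \mapsto e \cdot \phi(e'))$, which is the standard $\End = P \otimes P^\vee$ isomorphism for a finite rank locally free module. It is an isomorphism of graded $\cW$-bimodules in each cohomological degree; one checks directly that composition on the right matches the multiplication $(e_1 \otimes \phi_1)(e_2 \otimes \phi_2) = e_1 \phi_1(e_2) \otimes \phi_2$ on the left, and that the Leibniz-induced differentials on the two sides agree. Part (3) then follows formally: equation (\ref{eq:cohomologyOmega}) gives $\mc{H}^\bullet(\Omega_\hbar) \cong \mc{E}xt^\bullet_{\cW}(\mc{L}, \mc{L})$, which vanishes wherever $\mc{L}$ does, so has support in $\Lambda$; and the identification $\Omega_\hbar \cong \Omega \otimes_{\C} \C[\hbar^{\pm 1}]$ of $\Gm$-equivariant complexes transfers this to $\Omega$.

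For part (4), I verify flatness locally. With the trivialization above and dual basis $e_j^\vee$ of $\mc{E}^\vee$, part (1) identifies $\Omega_\hbar \cong \bigoplus_{i,k} \cW \cdot (e_i \otimes e_k^\vee)$; a short computation shows that left multiplication by $\Omega_\hbar$ preserves the rightmost index $k$, so each subspace $\Omega_\hbar^{(k)} := \bigoplus_i \cW \cdot (e_i \otimes e_k^\vee)$ is a left $\Omega_\hbar$-submodule, and $e_i \otimes e_k^\vee \mapsto e_i$ gives an isomorphism $\Omega_\hbar^{(k)} \iso \mc{E}$ of graded left $\Omega_\hbar$-modules. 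Hence $\mc{E}$ is locally a direct summand of $\Omega_\hbar$ as a graded left $\Omega_\hbar$-module; as a bounded complex that is termwise graded-projective it is K-flat (semi-flat) and in particular flat in the dg sense, and evidently finitely generated. The decomposition $\Omega_\hbar = \bigoplus_k \Omega_\hbar^{(k)}$ need not be preserved by the dg differential, but this is irrelevant for K-flatness of a bounded complex.

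For part (2), the pairing $\mathrm{ev}\colon \mc{E}^\vee \otimes_{\Omega_\hbar} \mc{E} \to \cW$, $\phi \otimes e \mapsto \phi(e)$, is a map of dg-algebras (with multiplication on the left induced by composition through the tensor). Combining (1) and the flatness from (4) with associativity of tensor products yields
$$\mc{E} \otimes_{\cW} (\mc{E}^\vee \otimes_{\Omega_\hbar} \mc{E}) \iso (\mc{E} \otimes_{\cW} \mc{E}^\vee) \otimes_{\Omega_\hbar} \mc{E} = \Omega_\hbar \otimes_{\Omega_\hbar} \mc{E} = \mc{E} = \mc{E} \otimes_{\cW} \cW,$$
showing that $\mathrm{ev}$ becomes an isomorphism after applying $\mc{E} \otimes_{\cW} -$. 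To conclude that $\mathrm{ev}$ itself is an isomorphism I argue locally: in the trivialization of (4), the graded idempotents $e_k \otimes e_k^\vee$ force $e_j^\vee \otimes e_k = 0$ in the tensor product for $j \ne k$ and identify all diagonal terms $e_k^\vee \otimes e_k$, producing a rank-one $\cW$-bimodule on which the induced differential necessarily vanishes. The hypothesis $\mc{L} \ne 0$ is used precisely to guarantee that $\mc{E}$ has positive local rank, so that the collapsed tensor product is $\cW$ rather than zero. The main obstacle is the bookkeeping in (4), namely ensuring the graded splitting of $\Omega_\hbar$ into ``column'' submodules genuinely suffices for dg flatness; once this is in place the remaining three statements follow by routine Morita-theoretic manipulations.
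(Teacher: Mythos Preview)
Your argument is correct and follows the same route as the paper: reduce to a local trivialization $\mc{E}\cong\cW^{\oplus k}$ and carry out the standard Morita identifications for matrix dg-algebras. Your column decomposition in (4) is a more explicit version of what the paper does in one line (forget the grading and differential and note that $\cW^{\oplus k}$ is projective over $\sEnd_{\cW}(\cW^{\oplus k})$); your local computation in (2) is exactly the paper's, and the preliminary step showing $\mc{E}\otimes_{\cW}\mathrm{ev}$ is an isomorphism is harmless but unnecessary.

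There is, however, one genuine error. In (4) you assert that $\mc{E}$, being bounded and termwise graded-projective, is K-flat (semi-flat) over $\Omega_\hbar$. This is \emph{false}, and the paper says so explicitly in the Remark immediately following the lemma: ``$\mc{E}$ will not in general be $K$-flat (in the sense of Spaltenstein) as a left dg $\Omega_\hbar$-module.'' The inference ``bounded and termwise projective $\Rightarrow$ K-flat'' is valid for complexes over an ordinary ring, but not for dg modules over a dg algebra: your column splitting $\Omega_\hbar=\bigoplus_k\Omega_\hbar^{(k)}$ is, as you yourself note, not a splitting of dg modules, so $\mc{E}$ is not a dg direct summand of a free dg module and the usual argument does not go through. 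The statement ``finite flat'' in the lemma means only that the underlying graded module is flat over the underlying graded algebra; this is what you have actually proved. The failure of K-flatness is not a technicality---it is precisely why the paper is forced to introduce the exotic derived category $D_{ex}(\LMod{\Omega})$ in place of the ordinary one. Your over-claim does not damage the proof of the lemma itself, but you should delete it.
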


\begin{proof}
Part (1) is immediate. 

Part (2). The evaluation map is a morphism of dg-algebras $\mc{E}^{\vee} \o_{\Omega_{\hbar}}\mc{E} \to \cW$. To show that it is an isomorphism, it suffices to check locally that this is an isomorphism of sheaves (forgetting the grading and differential). In this case we may assume that $\mc{E} = \cW^{\oplus k}$ for some $k>0$ and the claim is clear. 

Part (3) follows from \eqref{eq:cohomologyOmega} since $\mc{L}$ assumed to be supported on $\Lambda$. 

Part (4) is a local statement, so we may assume that $\mc{E}$ is a finite complex of free $\cW$-modules of finite rank. Moreover, we may forget the differential and the (homological) grading when checking that $\mc{E}$ is finite and flat. Thus, we are reduced to the fact that $\mc{E} = \cW^{\oplus k}$ is a finite flat (in fact projective) module over $\sEnd_{\cW}(\cW^{\oplus k})$.  
\end{proof}

\begin{remark}
Let $\mc{E}$ be a bounded complex of locally free good right $\cW$-modules.
\begin{enumerate}
    \item It is essential in Lemma~\ref{lem:EoverOmega} to work sheaf-theoretically. If $P$ is a finitely generated projective module over a (say regular, Noetherian) ring then it is not usually flat over $\End_R(P)$. 
    \item $\mathcal E$ will \textit{not} in general be $K$-flat (in the sense of Spaltenstein) as a left dg $\Omega_{\hbar}$-module. 
%    \item Since the sheaf $\cW$ has finite global dimension, Lemma~\ref{lem:EoverOmega}(1),(2) imply that $\Omega$ does too. In particular, the derived category of coherent dg $\Omega$-modules is equivalent to the category of perfect complexes. 
\end{enumerate}
\end{remark}

Let $\Lmod{\Omega}$ denote the category of sheaves of coherent right dg $\Omega$-modules and $\LMod{\Omega} = \Ind \Lmod{\Omega}$. For $\ms{M} \in C^b(\Good{\cW})$ and $\ms{N} \in \Lmod{\Omega}$, we define 
\[
G(\ms{M}) = \underline{\sHom}_{\mathcal W_\resol}(\mc{E},\ms{M})^{\Gm}, \quad F(\ms{N}) = \widetilde{\ms{N}} \o_{\Omega_{\hbar}} \mc{E}.
\]

\begin{lem}\label{lem:homotopyequiv}
    The functors $G,F$ define equivalences $C^b(\Good{\cW}) \iso \Lmod{\Omega}$ and $\Lmod{\Omega} \iso C^b(\Good{\cW})$. These induce equivalences 
    \[
    K^b(\Good{\cW}) \iso K(\Lmod{\Omega}), \quad K(\Lmod{\Omega}) \iso K^b(\Good{\cW})
    \]
    and
     \[
     K(\QCoh{\cW}) \iso K(\LMod{\Omega}), \quad K(\LMod{\Omega}) \iso K(\QCoh{\cW}). 
    \]
\end{lem}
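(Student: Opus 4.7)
The plan is to establish the chain-level equivalence $G : C^b(\Good{\cW}) \iso \Lmod{\Omega} : F$ first, and then obtain the homotopy-category and ind-category statements as formal consequences. First, note that $(F, G)$ form an adjoint pair: invoking the equivalence $\Lmod{\Omega} \iso \Lmod{(\Omega_\hbar, \Gm)}$ from Section~\ref{sec:Omegamodules} and the standard tensor--hom adjunction with respect to $\mc{E}$ as a $(\cW, \Omega_\hbar)$-bimodule, one has
\[
\Hom_\cW(F(\ms{N}), \ms{M}) \cong \Hom_{\Omega_\hbar}(\widetilde{\ms{N}}, \underline{\sHom}_\cW(\mc{E}, \ms{M})) \cong \Hom_\Omega(\ms{N}, G(\ms{M})),
\]
so $F$ is left adjoint to $G$. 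It then remains to check that the unit $\ms{N} \to G(F(\ms{N}))$ and counit $F(G(\ms{M})) \to \ms{M}$ are isomorphisms.

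This can be verified locally, where $\mc{E}$ is, as an underlying graded right $\cW$-module (forgetting the differential), a finite direct sum of shifts of $\cW$, and $\Omega_\hbar$ is correspondingly a sheaf of matrix-type dg-algebras over $\cW$. In this local setting, Lemma~\ref{lem:EoverOmega} provides the Morita identities $\mc{E}^\vee \otimes_{\Omega_\hbar} \mc{E} \cong \cW$ and $\mc{E} \otimes_\cW \mc{E}^\vee \cong \Omega_\hbar$, along with the flatness of $\mc{E}$ over $\Omega_\hbar$. These force both unit and counit to be isomorphisms of underlying graded sheaves, and compatibility with differentials is automatic because every comparison map is a canonical evaluation or coevaluation map, natural in each variable.

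The descent to homotopy categories is formal: $F$ and $G$ are additive, so they preserve chain homotopies and descend to the homotopy categories, where the chain-level natural isomorphisms persist as natural isomorphisms. For the ind-category statement, $F$ commutes with filtered colimits (being a tensor product), and $G$ commutes with filtered colimits because $\mc{E}$ is locally a bounded complex of finite rank locally free modules (so $\underline{\sHom}_\cW(\mc{E}, -)$ preserves filtered colimits on each affine open) and because $\Gm$-invariants preserve filtered colimits (by the reductivity of $\Gm$). Consequently, the adjoint equivalence extends to the ind-completions $\LMod{\Omega}$ and $\QCoh{\cW}$, and descends to the homotopy categories to give $K(\QCoh{\cW}) \iso K(\LMod{\Omega})$.

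The main subtlety I anticipate is the global-to-local gluing: although the local trivializations of $\mc{E}$ need not patch to give a global splitting, the canonical sheaf-theoretic nature of the tensor--hom adjunction and of the Morita evaluation and coevaluation maps ensures that the natural transformations of interest may be checked on an open cover. The global isomorphism $\cW \iso \mc{E}^\vee \otimes_{\Omega_\hbar} \mc{E}$ from Lemma~\ref{lem:EoverOmega}(2) is the global counterpart of the local Morita identity and is the key ingredient to see that the counit is a global isomorphism; the unit is handled dually via $\mc{E} \otimes_\cW \mc{E}^\vee \cong \Omega_\hbar$.
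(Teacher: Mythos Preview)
Your proposal is correct and follows essentially the same approach as the paper. Both arguments verify that the unit $\ms{N}\to GF(\ms{N})$ and counit $FG(\ms{M})\to \ms{M}$ are isomorphisms using the Morita identities $\mc{E}\otimes_{\cW}\mc{E}^{\vee}\cong\Omega_{\hbar}$ and $\mc{E}^{\vee}\otimes_{\Omega_{\hbar}}\mc{E}\cong\cW$ from Lemma~\ref{lem:EoverOmega}, then descend to homotopy categories by additivity and extend to the ind-categories by cocontinuity; the paper simply writes out the chain of sheaf isomorphisms globally rather than phrasing it as a local check plus gluing, but the content is the same.
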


\begin{proof}
Let $\ms{N} \in \Lmod{\Omega}$. Then the adjunction $\Id \to GF$ applied to $\ms{N}$ is
\begin{align*} 
\ms{N} \to GF(\ms{N}) & = \sHom_{\cW}(\mc{E}, \widetilde{\ms{N}} \o_{\Omega_{\hbar}} \mc{E})^{\Gm} \\
& \cong (\widetilde{\ms{N}} \o_{\Omega_{\hbar}} \sHom_{\cW}(\mc{E}, \mc{E}) )^{\Gm} \\
& \cong (\widetilde{\ms{N}} \o_{\Omega_{\hbar}} \Omega_{\hbar} )^{\Gm} \cong \ms{N},
\end{align*}
where we have used Lemma~\ref{lem:EoverOmega}(1). Similarly, if $\ms{M} \in C^b(\Good{\cW})$ then 
\begin{align*} 
\ms{M} \leftarrow FG(\ms{M}) & = \widetilde{\sHom_{\cW}(\mc{E},\ms{M})^{\Gm}} \o_{\Omega_{\hbar}} \mc{E} \\
& \cong \sHom_{\cW}(\mc{E},\ms{M}) \o_{\Omega_{\hbar}} \mc{E} \\
& \cong \ms{M} \o_{\cW} (\sHom_{\cW}(\mc{E},\cW) \o_{\Omega_{\hbar}} \mc{E}) \\
& \cong \ms{M} \o_{\cW} \cW = \ms{M},
\end{align*}
where we have used the fact that if $\ms{P}$ is a $\Gm$-equivariant dg $\Omega_{\hbar}$-module then the adjunction $\widetilde{\ms{P}^{\Gm}} \to \ms{P}$ is an isomorphism and we have used Lemma~\ref{lem:EoverOmega}(2). Since $F,G$ are additive functors they induce equivalences of homotopy categories. 

Identifying $C(\QCoh{\cW}) = \Ind C^b(\Good{\cW})$ and using the fact that $G$ and $F$ are cocontinuous on $C^b(\Good{\cW})$ and $\Lmod{\Omega}$ respectively, the above proof shows that $G$ and $F$ extend to equivalences $C(\QCoh{\cW}) \iso \LMod{\Omega}$ and $\LMod{\Omega} \iso C(\QCoh{\cW})$ respectively. Passing to homotopy categories gives the final claim of the lemma. 
\end{proof}

If $\ms{F}$ is a complex of sheaves on $\resol$ then we say that $\ms{F}$ is \textit{acyclic} if the sheaves $\mc{H}^i(\ms{F})$ are zero for all $i \in \Z$. Since $\mc{E}$ is a bounded complex of locally free $\cW$-modules, the functor $G$ is $K$-projective i.e. if $\ms{M} \in K(\QCoh{\cW})$ is an acyclic complex then so too is $G(\ms{M})$. However, $\mc{E}$ will almost never be $K$-flat as an $\Omega$-module and so, even if $\ms{N}$ is acyclic, $F(\ms{N})$ will not in general be acyclic. An object $\ms{N} \in K(\LMod{\Omega})$ is said to be \textit{$F$-acyclic} if $F(\ms{N})$ is acyclic. 

\begin{lem}\label{lem:Facyclicacyclic}
If $\ms{N} \in K(\LMod{\Omega}) $ is $F$-acyclic then $\ms{N}$ is acyclic.
\end{lem}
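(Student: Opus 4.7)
The proof is essentially a two-line consequence of what has already been set up. The plan is as follows.

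First, recall from the paragraph immediately preceding the lemma that $G$ preserves acyclicity: since $\mc{E}$ is a bounded complex of locally free (hence, at least locally, $\sHom_{\cW}(\mc{E},-)$-acyclic) good $\cW$-modules, and taking $\Gm$-invariants is exact (because $\Gm$ is reductive, so we may decompose into weight spaces), applying $G = \underline{\sHom}_{\cW}(\mc{E},-)^{\Gm}$ to an acyclic complex of quasi-coherent $\cW$-modules yields an acyclic complex of $\Omega$-modules. Thus, if $F(\ms{N})$ is acyclic, then $GF(\ms{N})$ is acyclic.

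Second, by Lemma~\ref{lem:homotopyequiv} the unit $\ms{N} \to GF(\ms{N})$ is an isomorphism of dg $\Omega$-modules (the computation there shows it is an honest isomorphism of complexes, not merely a homotopy equivalence), since it reduces via Lemma~\ref{lem:EoverOmega}(1) to the identification $\widetilde{\ms{N}} \otimes_{\Omega_{\hbar}} \Omega_{\hbar} \cong \widetilde{\ms{N}}$ followed by $(\widetilde{\ms{N}})^{\Gm} = \ms{N}$. Combining the two observations, $\ms{N} \cong GF(\ms{N})$ is acyclic, which is precisely the claim.

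There is essentially no obstacle here; the only place a reader might want to pause is the justification that $G$ preserves acyclicity at the level of sheaves, which was asserted before the lemma's statement. If one wishes to spell it out, it suffices to note that acyclicity is a local (stalk-level) property, that locally $\mc{E}$ is a bounded complex of free $\cW$-modules so $\underline{\sHom}_{\cW}(\mc{E},-)$ locally is a finite direct sum of shifts of the identity functor, and that the $\Gm$-invariants functor is exact on $\Gm$-equivariant quasi-coherent modules. No further ingredients are needed.
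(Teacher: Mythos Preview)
Your proof is correct and takes essentially the same approach as the paper: use that the adjunction $\ms{N} \to GF(\ms{N})$ is an isomorphism (from Lemma~\ref{lem:homotopyequiv}) together with the fact that $G$ sends acyclic complexes to acyclic complexes. The paper's proof is just these two lines; your added justification for why $G$ preserves acyclicity is a helpful elaboration but not a different argument.
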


\begin{proof}
This follows from the fact that the adjunction $\ms{N} \to GF(\ms{N})$ is an isomorphism in $K(\LMod{\Omega})$ and $G$ sends acyclic objects to acyclic objects. 
\end{proof}

Let $\mc{N}_F$ denote the full subcategory of $K(\LMod{\Omega})$ consisting of $F$-acyclic complexes. Then $\mc{N}_F$ is a thick strictly triangulated subcategory of $K(\LMod{\Omega})$ and we call the Verdier quotient $K(\LMod{\Omega}) / \mc{N}_F$ the "exotic derived category" $D_{ex}(\LMod{\Omega})$.

Since $G,F$ are equivalences on the level of homotopy categories, they induce equivalences 
\begin{equation}\label{eq:DOmegaequi}
    G \colon D(\QCoh{\cW}) \iso D_{ex}(\LMod{\Omega}), \quad F \colon  D_{ex}(\LMod{\Omega}) \iso D(\QCoh{\cW}).
\end{equation}
Let $D^b_{ex}(\Lmod{\Omega})$ denote the quotient of $K(\Lmod{\Omega})$ by $\mc{N}_F \cap K(\Lmod{\Omega})$. Then $G,F$ restrict to equivalences
\[
G \colon D^b(\Good{\cW}) \iso D_{ex}^b(\Lmod{\Omega}), \quad F \colon  D_{ex}^b(\Lmod{\Omega}) \iso D^b(\Good{\cW}). 
\]  
Note that Lemma~\ref{lem:Facyclicacyclic} implies that there are quotient functors $D_{ex}(\LMod{\Omega}) \to D(\LMod{\Omega})$ and $D_{ex}^b(\Lmod{\Omega}) \to D^b(\Lmod{\Omega})$. 

\begin{remark}
    This construction can be very degenerate. For instance, consider the two-term complex $\mc{E} \colon \cW \stackrel{\Id}{\longrightarrow} \cW$. Then every $\Omega$-module is acyclic and $G$ is the (essentially surjective) functor that sends an object $\ms{M}$ to the $\Gm$-invariants of the cone $\mathrm{Cone}(\Id)$ of the identity morphism $\ms{M} \to \ms{M}$. Then one can easily check that $\ms{N} \cong G(\ms{M})$ is $F$-acyclic if and only if $\ms{M}$ is acyclic.   
\end{remark}

\begin{lem}
 $D^b_{ex}(\Lmod{\Omega})$ is a full subcategory of $D_{ex}(\LMod{\Omega})$. 
 \end{lem}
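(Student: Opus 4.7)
The plan is to transport the claim along the equivalences $G$ and $F$ to the corresponding, already-established, statement on the $\cW$-module side. First I would observe that the functors $G$ and $F$ of Lemma~\ref{lem:homotopyequiv} are compatible with the ``coherent inside quasicoherent'' inclusions: because $\mc{E}$ is a bounded complex of finite rank locally free $\cW$-modules, $G$ sends $K^b(\Good{\cW})$ into $K(\Lmod{\Omega})$, and $F$ sends $K(\Lmod{\Omega})$ into $K^b(\Good{\cW})$. This yields a commutative square of homotopy categories
\[
\begin{tikzcd}
K^b(\Good{\cW}) \ar[r,hook] \ar[d,"G"',"\wr"] & K(\QCoh{\cW}) \ar[d,"G","\wr"'] \\
K(\Lmod{\Omega}) \ar[r,hook] & K(\LMod{\Omega})
\end{tikzcd}
\]
in which the horizontal arrows are the natural inclusions and both vertical arrows are equivalences.

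Next I would pass to the Verdier quotients by $\mc{N}_F$ on the right and $\mc{N}_F\cap K(\Lmod{\Omega})$ on the left. The inclusion $K(\Lmod{\Omega})\hookrightarrow K(\LMod{\Omega})$ tautologically takes $\mc{N}_F\cap K(\Lmod{\Omega})$ into $\mc{N}_F$, and by construction the two vertical equivalences descend to the equivalences $G \colon D^b(\Good{\cW}) \iso D^b_{ex}(\Lmod{\Omega})$ and $G \colon D(\QCoh{\cW}) \iso D_{ex}(\LMod{\Omega})$ spelled out in the text preceding this lemma. This gives a commutative square
\[
\begin{tikzcd}
D^b(\Good{\cW}) \ar[r] \ar[d,"G"',"\wr"] & D(\QCoh{\cW}) \ar[d,"G","\wr"'] \\
D^b_{ex}(\Lmod{\Omega}) \ar[r] & D_{ex}(\LMod{\Omega})
\end{tikzcd}
\]
and so the bottom horizontal functor is fully faithful if and only if the top one is.

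Finally I would invoke the theorem stated earlier identifying $D(\QCoh{\cW})^c = D^b(\Good{\cW}) = \mathsf{perf}(\cW)$. This says that the image of $D^b(\Good{\cW})$ in $D(\QCoh{\cW})$ is precisely the full triangulated subcategory of compact objects, and in particular the canonical functor $D^b(\Good{\cW}) \to D(\QCoh{\cW})$ is fully faithful; commutativity of the second square then gives the same property for $D^b_{ex}(\Lmod{\Omega}) \to D_{ex}(\LMod{\Omega})$. No serious obstacle arises; the one point where I would be most careful is verifying that the vertical arrows in the second square really are the equivalences of \eqref{eq:DOmegaequi}, which is a matter of chasing the definitions, since the exotic derived category $D_{ex}$ was constructed precisely so that $G$ and $F$ descend to such equivalences.
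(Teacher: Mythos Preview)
Your argument is correct and is essentially the same as the paper's: both transport the question along the equivalences $F,G$ to the known fully faithful inclusion $D^b(\Good{\cW})\hookrightarrow D(\QCoh{\cW})$ via a commutative square. The paper's proof is just a terser statement of the same square (using $F$ on one side and $G$ on the other rather than $G$ on both), while you spell out the intermediate step at the level of homotopy categories.
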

 
 \begin{proof}
 This follows from the fact that we can form the commutative diagram 
 \[
 \begin{tikzcd}
 D^b_{ex}(\Lmod{\Omega}) \ar[r] \ar[d,"F"'] & D_{ex}(\LMod{\Omega}) \\
 D^b(\Good{\cW}) \ar[r] & D(\QCoh{\cW}) \ar[u,"G"']
 \end{tikzcd}
 \]
 with the vertical arrows being equivalences and the bottom horizontal arrow fully faithful.
 \end{proof}

\begin{defn}\label{defn:localgenerator}
    The object $\mc{L} \in D^b(\Good{\cW})$ is a \textit{local generator} if given $\ms{M} \in D^b(\Good{\cW})$ such that $\R Hom_{\cW}(\mc{L},\ms{M})$ is an acyclic sheaf, we must have $\ms{M} \cong 0$.
\end{defn}

\begin{lem}\label{lem:acyclicpointsRHom}
Let $\mc{L}, \ms{M} \in D^b(\Good{\cW})$. Under the elliptic $\Gm$-action on $\resol$, the complex of sheaves $\R \sHom_{\cW}(\mc{L},\ms{M})$ is acyclic if and only if $\R \sHom_{\cW}(\mc{L},\ms{M})^{\Gm}$ is acyclic on $\resol_{\Eq}$ if and only if $\R \sHom_{\cW}(\mc{L},\ms{M})^{\Gm}_y$ is an acyclic complex of vector spaces for all $y \in Y$. 
\end{lem}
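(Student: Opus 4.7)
The plan is to prove the chain $(i) \Leftrightarrow (ii) \Leftrightarrow (iii)$, writing $\mc{F} := \R\sHom_{\cW}(\mc{L},\ms{M})$ for brevity. Since $\mc{L}, \ms{M} \in D^b(\Good{\cW})$, the complex $\mc{F}$ is bounded with cohomology sheaves $\mc{H}^k(\mc{F}) = \mc{E}xt^k_{\cW}(\mc{L},\ms{M})$ that are coherent $\Gm$-equivariant $\cW$-modules.

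For $(i) \Leftrightarrow (ii)$, I will exploit two facts. First, $\Gm$ is reductive in characteristic zero, so the $\Gm$-invariants functor is exact on equivariant sheaves of $\C$-vector spaces and commutes with cohomology: $\mc{H}^k(\mc{F}^{\Gm}) \cong \mc{H}^k(\mc{F})^{\Gm}$ on $\resol_{\Eq}$. Second, after the normalization in Section~2.4, $\hbar$ is a weight-one unit in $\cW$, so multiplication by $\hbar$ implements $\Gm$-equivariant isomorphisms between adjacent weight components of any $\cW$-module. Thus $\mc{N}(V)^{\Gm} = 0$ if and only if $\mc{N}(V) = 0$ for every $\Gm$-stable open $V$ and every $\Gm$-equivariant $\cW$-module $\mc{N}$; covering $\resol$ by $\Gm$-stable opens such as $U_i = \bigsqcup_{j \leq i} C_j$ then promotes this to the sheaf-level statement: $\mc{N}^{\Gm} = 0$ on $\resol_{\Eq}$ iff $\mc{N} = 0$ on $\resol$. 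Applying this to each $\mc{E}xt^k_{\cW}(\mc{L},\ms{M})$ yields the first equivalence.

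For $(ii) \Leftrightarrow (iii)$ only the converse requires argument. Boundedness of $\mc{F}$ ensures stalks commute with cohomology, so (iii) reduces to $\mc{E}xt^k_{\cW}(\mc{L},\ms{M})^{\Gm}_y = 0$ in $\resol_{\Eq}$ for every $y \in Y$ and every $k$. Fix $k$, set $\mc{N} = \mc{E}xt^k_{\cW}(\mc{L},\ms{M})$ and let $Z \subseteq \resol$ be its ($\Gm$-stable closed) support; it suffices to show $Z = \emptyset$. Suppose not; ellipticity then produces some $y \in Z \cap Y$ as the limit of any $x \in Z$. By Sumihiro's theorem $y$ has a $\Gm$-stable affine open neighborhood $V_0$, on which $\mc{N}(V_0)$ surjects onto the nonzero stalk $\mc{N}_y$. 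Picking $s \in \mc{N}(V_0)$ with $s_y \neq 0$ and decomposing into weight components $s = \sum_n s_n$, some $(s_n)_y \neq 0$, so that $\hbar^{-n} s_n \in \mc{N}(V_0)^{\Gm}$ has nonzero germ at $y$. Its restriction to any $\Gm$-stable open $V \subseteq V_0$ containing $y$ remains nonzero, yielding a nonzero class in the stalk $\mc{N}^{\Gm}_y$ in $\resol_{\Eq}$ and contradicting (iii). Hence $Z = \emptyset$, $\mc{N} = 0$, and (ii) follows via the first step.

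The main obstacle lies in this second equivalence: stalks in $\resol_{\Eq}$ at non-fixed points are not directly comparable with stalks in the ordinary topology on $\resol$, since the $\Gm$-stable opens around a generic point are generally not cofinal in all opens there. My strategy is to bypass this comparison by working with the $\Gm$-stable support of each cohomology sheaf and using ellipticity to reduce global vanishing to a check at fixed points, where Sumihiro's theorem provides a $\Gm$-stable affine neighborhood and the weight-one invertibility of $\hbar$ lets one promote any nonzero germ to a $\Gm$-invariant one.
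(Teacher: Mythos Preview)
Your proof follows the same strategy as the paper's: use that the non-acyclic locus is closed and $\Gm$-stable, invoke ellipticity to produce a fixed point in it, and use that $\hbar$ is an invertible weight-one element to pass between a $\C(\!(\hbar)\!)$-module and its $\Gm$-invariants. Your organization is slightly different---you establish $(i)\Leftrightarrow(ii)$ directly before reducing to fixed points, whereas the paper first reduces $(i)$ to the stalk condition at fixed points and then says ``a similar argument'' handles $(ii)\Leftrightarrow(iii)$---but the content is the same.

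Two small points. First, the sheaves $\mc{E}xt^k_{\cW}(\mc{L},\ms{M})$ are not $\cW$-modules (since $\cW$ is noncommutative and $\sHom_{\cW}$ lands only in $\C(\!(\hbar)\!)$-modules); this is harmless, as your argument only uses the invertible weight-one central element $\hbar$. Second, the claim that ``$\mc{N}(V_0)$ surjects onto the nonzero stalk $\mc{N}_y$'' is not true in general (already for quasi-coherent sheaves the map $M\to M_{\mf p}$ is rarely surjective) and is stronger than you need: it suffices to find \emph{some} $s\in\mc{N}(V_0)$ with $s_y\neq 0$, and even that deserves a word of justification for these Ext sheaves. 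A cleaner route, and the one implicit in the paper's ``similar argument'', is to run the closed-support-and-ellipticity reasoning directly for the sheaf $\mc{N}^{\Gm}$ on $\resol_{\Eq}$: its support is closed in $\resol_{\Eq}$, hence $\Gm$-stable closed in $\resol$, hence meets $Y$ if nonempty, which immediately gives $(ii)\Leftrightarrow(iii)$ without ever lifting stalks to sections.
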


\begin{proof}
Note that $\R \sHom_{\cW}(\mc{L},\ms{M})$ is acyclic if and only if the stalks $\R \sHom_{\cW}(\mc{L},\ms{M})_x$ are acyclic for all $x \in \resol$. The locus in $\resol$ where $\R \sHom_{\cW}(\mc{L},\ms{M})$ is not acyclic is closed and $\Gm$-stable. Since the $\Gm$ action on $\resol$ is elliptic, any such closed subset contains a fixed point. Therefore, if $\R \sHom_{\cW}(\mc{L},\ms{M})$ is not acyclic then there exits $y \in Y$ such that $\R \sHom_{\cW}(\mc{L},\ms{M})_y$ is not acyclic. Now, $\R \sHom_{\cW}(\mc{L},\ms{M})_y$ is a complex of $\Gm$-equivariant $\C(\!(\hbar)\!)$-modules with $\C(\!(\hbar)\!)$-linear differential. Since $\Gm$ acts with weight one on $\hbar$, the complex $\R \sHom_{\cW}(\mc{L},\ms{M})_y$ is acyclic if and only if $\R \sHom_{\cW}(\mc{L},\ms{M})_y^{\Gm}$ is acyclic. Thus, if $\R \sHom_{\cW}(\mc{L},\ms{M})$ is not acyclic then there exists $y \in Y$ such that $\R \sHom_{\cW}(\mc{L},\ms{M})_y^{\Gm}$ is not acyclic. Conversely, if the complex $\R \sHom_{\cW}(\mc{L},\ms{M})$ is acyclic then clearly $\R \sHom_{\cW}(\mc{L},\ms{M})_y^{\Gm}$ is acyclic for all $y \in Y$. 

A similar argument shows that $\R \sHom_{\cW}(\mc{L},\ms{M})^{\Gm}$ is acyclic if and only if $\R \sHom_{\cW}(\mc{L},\ms{M})^{\Gm}_y$ is an acyclic complex of vector spaces for all $y \in Y$. 
\end{proof}

\begin{lem}\label{lem:localgeneratorderivedex}
	Let $\mc{L} \in D^b(\Good{\cW})$ be a local generator. Then $D_{ex}^b(\Lmod{\Omega}) \iso D^b(\Lmod{\Omega})$. 
\end{lem}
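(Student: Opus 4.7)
The plan is to show the natural quotient functor $D_{ex}^b(\Lmod{\Omega}) \to D^b(\Lmod{\Omega})$ is an equivalence. Both categories arise from $K(\Lmod{\Omega})$ by Verdier quotient: the first by $\mc{N}_F \cap K(\Lmod{\Omega})$, the second by acyclic complexes. Lemma~\ref{lem:Facyclicacyclic} provides the inclusion $\mc{N}_F \cap K(\Lmod{\Omega}) \subseteq \{\text{acyclics}\}$, so it suffices to prove the reverse inclusion: every acyclic complex $\ms{N} \in K(\Lmod{\Omega})$ is $F$-acyclic. Once this equality of null systems is in hand, the two Verdier quotients coincide, as do their bounded-cohomology full subcategories.

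So suppose $\ms{N} \in K(\Lmod{\Omega})$ is acyclic and set $\ms{M} := F(\ms{N})$, which lies in $K^b(\Good{\cW})$ by Lemma~\ref{lem:homotopyequiv}. That same lemma supplies a natural isomorphism $\ms{N} \iso GF(\ms{N}) = G(\ms{M})$ in $K(\Lmod{\Omega})$, so $G(\ms{M})$ is acyclic. Unwinding definitions, $G(\ms{M}) = \underline{\sHom}_{\cW}(\mc{E}, \ms{M})^{\Gm}$, and because $\mc{E}$ is a bounded complex of locally free good right $\cW$-modules resolving the local generator $\mc{L}$, this underlined Hom complex represents the derived Hom $\R\sHom_{\cW}(\mc{L}, \ms{M})^{\Gm}$. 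Hence $\R\sHom_{\cW}(\mc{L}, \ms{M})^{\Gm}$ is acyclic on $\resol_{\Eq}$.

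Now Lemma~\ref{lem:acyclicpointsRHom} translates acyclicity of the $\Gm$-invariants on $\resol_{\Eq}$ back to acyclicity of the full complex $\R\sHom_{\cW}(\mc{L}, \ms{M})$ on $\resol$. Definition~\ref{defn:localgenerator} then forces $\ms{M} \cong 0$ in $D^b(\Good{\cW})$, which is precisely the statement that $F(\ms{N}) = \ms{M}$ is acyclic in $C^b(\Good{\cW})$. Therefore $\ms{N}$ is $F$-acyclic, completing the argument.

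The one step that requires genuine care is the identification of $G$ applied to $\ms{M}$ with the derived $\R\sHom_{\cW}(\mc{L}, \ms{M})^{\Gm}$; this relies on the local freeness of the terms of $\mc{E}$, which is why we took $\mc{E}$ to be a locally free resolution in the first place. Every other link in the chain is a direct invocation of material already in place: the adjunction half of Lemma~\ref{lem:homotopyequiv}, the local-to-global acyclicity bridge of Lemma~\ref{lem:acyclicpointsRHom}, and the defining property of a local generator. I do not anticipate a serious obstacle; the argument is essentially a formal combination of these three inputs, and mirrors (in reverse) the proof of Lemma~\ref{lem:Facyclicacyclic}.
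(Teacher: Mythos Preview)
Your proof is correct and follows essentially the same approach as the paper: both show that the two null systems in $K(\Lmod{\Omega})$ coincide by taking an acyclic $\ms{N}$, writing it as $G(\ms{M})$ via Lemma~\ref{lem:homotopyequiv}, identifying $G(\ms{M})$ with $\R\sHom_{\cW}(\mc{L},\ms{M})^{\Gm}$ using local freeness of $\mc{E}$, and then applying Lemma~\ref{lem:acyclicpointsRHom} together with the local-generator hypothesis to conclude $\ms{M}\cong 0$. The only cosmetic difference is that you start by setting $\ms{M}=F(\ms{N})$ and then invoke $\ms{N}\cong GF(\ms{N})$, whereas the paper begins with $\ms{N}=G(\ms{M})$ and notes $F(\ms{N})\cong\ms{M}$; these are equivalent by the equivalence in Lemma~\ref{lem:homotopyequiv}.
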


\begin{proof}
We must show that $\ms{N} \in K(\Lmod{\Omega})$ is $F$-acyclic if and only if $\ms{N}$ is acyclic. One implication always holds by Lemma~\ref{lem:Facyclicacyclic}. Therefore, we need to show that if $\ms{N}$ is acyclic then it is $F$-acyclic. By Lemma~\ref{lem:homotopyequiv}, we may assume that $\ms{N} = G(\ms{M})$ for some $\ms{M} \in K^b(\Good{\cW})$. This means that $F(\ms{N}) \cong \ms{M}$. Moreover, 
	\[
	\mc{H}^i(\ms{N}) = \mc{H}^i(\sHom_{\cW}(\mc{E},\ms{M})^{\Gm}) = \R \sHom^i_{\cW}(\ms{L},\ms{M})^{\Gm}. 
	\]	
	Therefore, if $\mc{H}^i(\ms{N}) = 0$ for all $i$ then $\R \sHom_{\cW}(\ms{L},\ms{M})^{\Gm}$ is an acyclic complex of sheaves. By Lemma~\ref{lem:acyclicpointsRHom}, $\R \sHom_{\cW}(\ms{L},\ms{M})$ is acyclic and we deduce that $\ms{M}$ is zero since $\ms{L}$ is a local generator. 
\end{proof}

Note that when $\mc{E}$ is a resolution of a local generator $\mc{L}$, Lemma~\ref{lem:localgeneratorderivedex} shows that $\mc{E}$ is $K$-flat as an object in $K(\Lmod{\Omega})$, even though it is not $K$-flat in $K(\LMod{\Omega})$. 

We have shown in the proof of Lemma~\ref{lem:localgeneratorderivedex} that $\mc{N}_F \cap K(\Lmod{\Omega})$ is the category $\mathrm{Acyc}(\Lmod{\Omega})$ of acyclic objects in $K(\Lmod{\Omega})$. The following shows that one can characterize $\mc{N}_F$ entirely in terms of these acyclic complexes. 

\begin{lem}\label{lem:colimitofacyclic}
	Every acyclic complex $\ms{M} \in K(\QCoh{\cW})$ is the filtered colimit of a sequence of coherent (bounded) acyclic complexes $\ms{M}_i \in K^b(\Good{\cW})$. 
	
	Conversely, if $\ms{M} \in K(\QCoh{\cW})$  is the filtered colimit of a sequence of coherent (bounded) acyclic complexes $\ms{M}_i \in K^b(\Good{\cW})$ then $\ms{M}$ is acyclic. 
\end{lem}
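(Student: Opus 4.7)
The converse direction is immediate from the exactness of filtered colimits in the Grothendieck category $\QCoh{\cW}$: since $\mc{H}^n$ commutes with filtered colimits, any filtered colimit of acyclic complexes is acyclic.

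For the forward direction, my plan is to exploit the identification $C(\QCoh{\cW}) = \Ind C^b(\Good{\cW})$ established in the proof of Lemma~\ref{lem:homotopyequiv}. Writing $\ms{M}$ as a filtered colimit of bounded complexes of good modules and then replacing each by its image in $\ms{M}$, I may assume $\ms{M}$ is the filtered union of its bounded coherent subcomplexes $\ms{N}_\alpha$. It then suffices to show that the acyclic bounded coherent subcomplexes are cofinal in this system, so that $\ms{M}$ is presented as their filtered colimit.

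Given a bounded coherent subcomplex $\ms{N} \subset \ms{M}$ concentrated in degrees $[a,b]$, I would enlarge it to an acyclic one using the following observation. Because $\ms{N}^{b+1} = 0$, every element of $\ms{N}^b$ is a cocycle of $\ms{M}$, so $\ms{N}^b \subset Z^b(\ms{M}) = B^b(\ms{M})$ by the acyclicity of $\ms{M}$. Since $\ms{N}^b$ is compact in $\QCoh{\cW} = \Ind \Good{\cW}$, and $B^b(\ms{M})$ is the filtered union of $d(Q)$ for good $Q \subset \ms{M}^{b-1}$, there is a good $P$ with $d(P) \supset \ms{N}^b$; adjoining $P$ to degree $b-1$ and replacing $\ms{N}^b$ by $d(P)$ kills the cohomology at degree $b$. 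Iterating this downwards kills cohomology one degree at a time, remaining within the range $[a,b]$.

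The main obstacle is the bottom of the range: after killing cohomology in degrees $b, b-1, \ldots, a+1$, the remaining class at degree $a$ cannot be eliminated without extending the subcomplex to degree $a-1$, which risks introducing a new cohomology class there. The technical heart of the proof is to show that the good preimages can be chosen compatibly, as minimal coherent lifts of the successive cocycle modules rather than arbitrary ones, so that the chain of newly introduced cocycle submodules strictly decreases and terminates at zero in finitely many steps. This finite termination relies crucially on the compactness of good $\cW$-modules in $\QCoh{\cW}$ together with the global acyclicity of $\ms{M}$; together they force the descending chain of lifted cocycles to become zero, producing a bounded coherent acyclic subcomplex of $\ms{M}$ containing $\ms{N}$ and completing the cofinality argument.
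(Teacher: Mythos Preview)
Your converse is correct and matches the paper. For the forward direction there is a genuine gap: the termination claim is not justified by the ingredients you invoke. Compactness of good modules controls maps \emph{into} filtered colimits; it says nothing about descending chains of coherent submodules, which need not stabilise (there is no Artinian hypothesis here), and ``minimal coherent lifts'' are not well-defined. In fact the whole strategy of exhibiting bounded acyclic coherent \emph{subcomplexes} as cofinal can fail outright. Over a coherent ring admitting a module $A$ with an endomorphism $\phi$ satisfying $\phi^{2}=0$ and $\ker\phi=\operatorname{im}\phi\neq 0$, the acyclic complex $\ms{M}$ with $\ms{M}^{n}=A$ and $d^{n}=\phi$ for all $n$ has \emph{no} nonzero bounded acyclic subcomplex: any bounded subcomplex $N$ with lowest nonzero degree $a$ has $H^{a}(N)=N^{a}\cap\ker\phi\neq 0$. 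So the cofinality you need is simply false in this example, and no amount of clever lifting will produce it.

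The missing step is a reduction to bounded $\ms{M}$, and this necessarily leaves the world of subcomplexes: the good truncation $\tau^{\geq a}\tau^{\leq b}\ms{M}$ is bounded and acyclic, but its bottom term is the quotient $\ms{M}^{a}/Z^{a}(\ms{M})$, not a subobject of $\ms{M}^{a}$. Once $\ms{M}$ is bounded in $[a,b]$ and acyclic, your top-down enlargement of a coherent subcomplex does terminate for free, because $d^{a}\colon\ms{M}^{a}\to\ms{M}^{a+1}$ is injective (acyclicity at $a$ with $\ms{M}^{a-1}=0$), so every coherent subcomplex already has $H^{a}=0$ and no further descent is needed. Coherence of $\cW$ is exactly what keeps the intermediate kernels, images, and sums good throughout --- that is the content of the paper's one-line justification, which does not spell out the mechanism.
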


\begin{proof}
	The first claim follows from the fact that $\cW$ is a coherent sheaf of rings. 
	
	The converse follows from \cite[Lemma~10.8.8]{stacks}. 
\end{proof}

\begin{prop}
	The category $\mc{N}_F$ is the smallest thick strictly triangulated subcategory of the homotopy category $K(\LMod{\Omega})$ containing $\mathrm{Acyc}(\Lmod{\Omega})$ and closed under coproducts. 
\end{prop}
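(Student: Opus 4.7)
The plan is to establish both inclusions between $\mc{N}_F$ and the minimal subcategory $\mc{T}$, the smallest thick strictly triangulated subcategory of $K(\LMod{\Omega})$ containing $\mathrm{Acyc}(\Lmod{\Omega})$ and closed under coproducts. The containment $\mc{T} \subseteq \mc{N}_F$ is the easy direction: the functor $F = \widetilde{(-)} \o_{\Omega_{\hbar}} \mc{E}$ is triangulated and commutes with coproducts, so $\mc{N}_F = F^{-1}(\mathrm{Acyc}(\QCoh{\cW}))$ is automatically thick, triangulated and closed under coproducts. The paragraph immediately preceding the proposition establishes $\mathrm{Acyc}(\Lmod{\Omega}) \subseteq \mc{N}_F$, so the inclusion follows by minimality of $\mc{T}$.

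For the reverse inclusion, fix $\ms{N} \in \mc{N}_F$ and set $\ms{M} := F(\ms{N}) \in K(\QCoh{\cW})$, which is acyclic by assumption. By Lemma~\ref{lem:colimitofacyclic} we may write $\ms{M} \cong \mathrm{colim}_i \ms{M}_i$ as a filtered colimit of bounded acyclic coherent complexes $\ms{M}_i \in K^b(\Good{\cW})$. The isomorphism $\ms{N} \iso GF(\ms{N})$ from Lemma~\ref{lem:homotopyequiv}, together with cocontinuity of $G$, then gives $\ms{N} \cong \mathrm{colim}_i G(\ms{M}_i)$. Cocontinuity of $G$ holds because $\mc{E}$ is a bounded complex of coherent locally free $\cW$-modules, so $\sHom_{\cW}(\mc{E},-)$ commutes with filtered colimits; and $\Gm$-invariants commute with filtered colimits as the weight-zero summand of the $\Gm$-grading decomposition. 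Each $G(\ms{M}_i)$ lies in $\mathrm{Acyc}(\Lmod{\Omega}) \subseteq \mc{T}$: coherence is preserved because $G$ restricts to the equivalence $K^b(\Good{\cW}) \iso K(\Lmod{\Omega})$ of Lemma~\ref{lem:homotopyequiv}, and acyclicity is preserved since $\mc{E}$ is a bounded complex of locally free right modules.

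To finish I would realise the filtered colimit $\ms{N} \cong \mathrm{colim}_i G(\ms{M}_i)$ in $K(\LMod{\Omega})$ as an object of $\mc{T}$ via a telescope construction. In the Grothendieck abelian category $\LMod{\Omega}$, the filtered colimit is the cokernel of the standard map
\[
\bigoplus_{(i \to j) \in I} G(\ms{M}_i) \xrightarrow{\iota - f} \bigoplus_i G(\ms{M}_i),
\]
where $\iota$ is the obvious inclusion and $f$ encodes the transition morphisms. Exactness of filtered colimits in $\LMod{\Omega}$ upgrades this cokernel presentation to a distinguished triangle in $K(\LMod{\Omega})$ whose third vertex is $\ms{N}$. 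Since $\mc{T}$ is closed under coproducts, shifts and cones and contains each $G(\ms{M}_i)$, this forces $\ms{N} \in \mc{T}$.

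The main obstacle will be this last identification: verifying that the filtered colimit in $K(\LMod{\Omega})$ agrees with the mapping cone of $\iota - f$. For sequential colimits this is the classical mapping telescope argument; for general filtered indexing categories one reduces to a transfinite sequential colimit by passing to a well-ordered cofinal subsystem, iterating the cone construction stage by stage and invoking closure of $\mc{T}$ under coproducts at limit ordinals. Care is also needed to handle the interplay between termwise filtered colimits in $C(\LMod{\Omega})$ and their images in $K(\LMod{\Omega})$, which is mediated by the exactness of filtered colimits in the underlying Grothendieck abelian category.
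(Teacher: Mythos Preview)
Your approach is essentially the paper's, and in places it is more careful. For $\mc{T}\subseteq\mc{N}_F$ you argue directly that $\mc{N}_F$ is thick, triangulated, coproduct-closed, and contains $\mathrm{Acyc}(\Lmod{\Omega})$; the paper instead checks the related statement that filtered colimits of coherent acyclics are $F$-acyclic (using that $F$ is a left adjoint). Your route is cleaner and equally valid. For $\mc{N}_F\subseteq\mc{T}$ you and the paper do the same thing: apply $F$, invoke Lemma~\ref{lem:colimitofacyclic} on the $\cW$-side, and pull the filtered presentation back through the cocontinuous equivalence $G$ to write $\ms{N}$ as a filtered colimit of objects of $\mathrm{Acyc}(\Lmod{\Omega})$.

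Where you go beyond the paper is in flagging the telescope step, and your instinct is right that this is where the work hides. One caution about the specific mechanism you propose: the assertion that ``exactness of filtered colimits upgrades the cokernel presentation to a distinguished triangle in $K(\LMod{\Omega})$'' is not literally true. A short exact sequence of dg modules yields a distinguished triangle in $K$ only when it is split as a sequence of \emph{graded} modules; exactness of filtered colimits only gives you a short exact sequence, hence a triangle in $D$, not in $K$. For the sequential telescope $1-\mathrm{shift}\colon\bigoplus_i\ms{N}_i\to\bigoplus_i\ms{N}_i$ the naive left inverse lands in the product rather than the sum, so graded splitting is not automatic. What you actually need is that the map from the \emph{cone} of $1-\mathrm{shift}$ (which manifestly lies in $\mc{T}$) to the honest colimit is an isomorphism in $K(\LMod{\Omega})$, or else an independent argument placing the colimit in $\mc{T}$. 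The paper glosses over exactly this point, so you are not missing something obvious---but if you want a complete argument you should either justify that identification carefully or, equivalently, transport the whole question along the equivalence $F$ to $K(\QCoh{\cW})$ and argue there that the acyclics are generated under coproducts and cones by the bounded coherent acyclics.
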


\begin{proof}
	Let $\ms{N}$ be $F$-acyclic. Then $\ms{M} = F(\ms{N})$ is an acyclic complex of $\cW$-modules. By Lemma~\ref{lem:colimitofacyclic}, we may write $\ms{M} = \lim_{\to} \ms{M}_i$ as a filtered colimit of acyclic objects in $K^b(\Good{\cW})$. Since $G ( - ) = \sHom_{\cW}(\mc{E},-)$ and $\mc{E}$ is a bounded complex of (coherent) locally free $\cW$-modules, $G$ preserves filtered colimits (see e.g. \cite[Theorem~4.5]{CHDirect}; in fact, by \cite[Lemma~3.4]{CHDirect}, the functor $G$ preserved arbitrary colimits). Therefore, $\ms{N} = \lim_{\to} \ms{N}_i$, where $\ms{N}_i = G(\ms{M}_i)$ is acyclic in $K(\Lmod{\Omega})$. Therefore, each $\ms{N}_i$ is in $\mathrm{Acyc}(\Lmod{\Omega})$. 
	
	Conversely, if $\ms{N} = \lim_i \ms{N}_i$ with each $\ms{N}_i$ coherent and acyclic then it is a filtered colimit of $F$-acyclic complexes. Then $F(\ms{N}) = \lim_{\to} F(\ms{N}_i)$ because $F$ is is a left adjoint and hence commutes with arbitrary colimits. Thus, $F(\ms{N})$ is a colimit of acyclic complexes in $K(\QCoh{\cW})$. This implies that $F(\ms{N})$ is acyclic by \cite[Lemma~10.8.8]{stacks}. 	
\end{proof}

\subsection{Existence of a local generator} 

Fix a total ordering on $I$ such that $K_{< r} := \bigsqcup_{i < r} C_i$ is closed in $\resol$. Let $U_r$ denote the complement and $j_r \colon U_r \to \resol$ the open embedding. We note that $U_r$ is generally strictly bigger that the set $U(r)$ of Section~\ref{sec:externsionsympresmod}, and the former is not usually affine. We set:
\[
\mc{L} = \bigoplus_{r \in I} \R (j_r)_* \delta_{\Lag_r}.
\]
By Theorem~\ref{thm:holonomicpushforward}, $\mc{L}$ is a bounded complex with holonomic cohomology. 

Our goal is to prove the following theorem.

\begin{thm}\label{thm:localgeneratorv3}
	The object $\mc{L}$ is a local generator. 
\end{thm}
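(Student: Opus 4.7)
The plan is as follows. By Definition~\ref{defn:localgenerator} combined with Lemma~\ref{lem:acyclicpointsRHom}, it suffices to show that if $\ms{M}\in D^b(\Good{\cW})$ has $\R\sHom_{\cW}(\mc{L},\ms{M})^{\Gm}_{y_r}$ acyclic for every $\Gm$-fixed point $y_r$, then $\ms{M}\cong 0$. I would attack this by downward induction on $r$ in the chosen total order on the finite set $I$, showing in turn that $j_r^*\ms{M}\cong 0$; since the minimum $r_0\in I$ satisfies $U_{r_0}=\resol$, this yields $\ms{M}\cong 0$. The maximal case $r=r_{\max}$ is subsumed by the inductive step, for which the hypothesis $\ms{M}|_{U_{r^+}}\cong 0$ is vacuous.

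For the inductive step, assume $\ms{M}|_{U_{r^+}}\cong 0$, so $j_r^*\ms{M}$ has cohomology supported on the closed subset $C_r\subset U_r$. Since $y_r\in U_r$ and $j_r^*\R(j_r)_*\delta_{\Lag_r}=\delta_{\Lag_r}$, the summand of $\R\sHom_{\cW_{\resol}}(\mc{L},\ms{M})^{\Gm}_{y_r}$ coming from $\mc{L}_r=\R(j_r)_*\delta_{\Lag_r}$ restricts to
\[
\R\sHom_{\cW_{U_r}}(\delta_{\Lag_r},j_r^*\ms{M})^{\Gm}_{y_r},
\]
which is therefore acyclic. Applying the Hamiltonian reduction equivalence $\Ham_r\colon\Good{\cW_{U_r}}_{C_r}\iso\Good{\cW_{S_r}}$, compatible with $\R\sHom$ and with passage to $\Gm$-invariants, transports this to $\R\Hom_{\cW_{S_r}}(\delta',\ms{M}')^{\Gm}$, where $\delta'=\Ham_r(\delta_{\Lag_r})$ and $\ms{M}'=\Ham_r(j_r^*\ms{M})$. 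Because $S_r$ is a symplectic vector space with a linear elliptic $\Gm$-action of strictly positive weights, the only $\Gm$-invariant open neighborhood of the unique fixed point $0\in S_r$ is $S_r$ itself, so the stalk-at-a-fixed-point becomes a global computation.

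The remaining step is to prove that $\R\Hom_{\cW_{S_r}}(\delta',\ms{M}')^{\Gm}=0$ forces $\ms{M}'=0$. Under the identification of $\cW_{S_r}^{\Gm}$ with an $\hbar$-Weyl algebra $D_{\hbar}$ and of $\delta'^{\Gm}$ with the vacuum module $\mc{O}(V_r)=D_{\hbar}/D_{\hbar}(\partial_1,\ldots,\partial_{d_r})$ on the attracting Lagrangian $V_r=\pi_r(\Lag_r)$, this asks that a Koszul complex with respect to the $\partial_i$ be acyclic only for the zero module. The main obstacle is that $\ms{M}'$ is only $\Gm$-equivariant (for the elliptic action) and not $\mathsf{T}$-equivariant, so one cannot directly invoke the Hamiltonian $\mathbb Z$-grading used in the proof of Proposition~\ref{prop:uniquesimple}. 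I would overcome this by exploiting the intrinsic $\mathsf{H}$-action on the algebra $\cW_{S_r}$ together with the $\mathsf{T}$-equivariance of the test object $\delta'$ to equip the $\R\Hom$-complex with an auxiliary weight grading bounded below, then adapt the lowest-weight/nilpotency argument of Proposition~\ref{prop:uniquesimple} to deduce $\ms{M}'=0$. Producing this auxiliary grading rigorously, in the absence of a $\mathsf{T}$-structure on $\ms{M}'$, and verifying that it is bounded below on good modules, is the technical core of the proof.
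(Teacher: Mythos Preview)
Your inductive strategy and the reduction via Hamiltonian reduction to a statement on the symplectic vector space $S_r$ match the paper's approach closely. The paper likewise peels off one closed cell at a time, writes $\ms{M}=\Ham^{\perp}(\ms{M}')$, and (via Lemma~\ref{lem:stalksofRHom} and Proposition~\ref{prop:RHomacycliccoistred}) reduces to showing that $\Ham^{*}(\mc{L})$ is a local generator on $S_r$; this is exactly the step where you arrive at the Koszul/de Rham acyclicity condition for $\ms{M}'$.

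The gap is in your final paragraph. You have correctly reduced to: for the Weyl algebra $D=D_{d_r}$, if the Koszul complex $\R\Hom_{D}(D/D\cdot(\partial_1,\ldots,\partial_{d_r}),M')$ is acyclic, then $M'\cong 0$. You then declare this the ``technical core'' and propose manufacturing an auxiliary $\mathsf{H}$-grading on $\R\Hom$ using only the $\mathsf{H}$-action on the algebra and on $\delta'$. That idea is both unnecessary and, as stated, unworkable: without an $\mathsf{H}$-structure on $\ms{M}'$ there is no honest $\mathbb{Z}$-grading on $\R\Hom(\delta',\ms{M}')$, only at best a filtration, and the lowest-weight argument of Proposition~\ref{prop:uniquesimple} genuinely uses a grading on the \emph{module}. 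The paper avoids this entirely: it identifies $\Good{\cW_{S_r}}$ with coherent $\dd_{V}$-modules (Proposition~\ref{prop:localgeneratorcotangent} and Lemma~\ref{lem:invarianthom}) so that the statement becomes $\ms{M}'^{\Gm}\otimes_{\dd_V}^{\mathbb{L}}\mc{O}_V$ acyclic $\Rightarrow \ms{M}'^{\Gm}\cong 0$, and then quotes the classical conservativity of the de Rham functor, \cite[Lemma~7.2.6(iii)]{BD} or \cite[Lemma~2.1.6]{BDChiral}. No Hamiltonian equivariance on $\ms{M}'$ is needed.

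So your ``main obstacle'' is a phantom: once you recognize the Koszul complex as the $\dd$-module de Rham complex, the vanishing criterion you need is already in the literature and holds for arbitrary coherent $\dd$-modules. Replace your last paragraph with a reference to Beilinson--Drinfeld (packaged in the paper as Proposition~\ref{prop:localgeneratorcotangent}) and the proof is complete.
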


The proof is by induction on the number of coisotropic cells in $\resol$. We begin by assuming that $\resol$ itself is a coisotropic cell. This means that $\resol = T^* V$ for some vector space $V$. In this case the elliptic $\Gm$-action scales the fibers of $T^*V$ and hence has fixed points $Y = V$. Let $\rho \colon T^* V \to V$ be the projection map. By \cite[Proposition~4.33]{BDMN}, the functor $\ms{M} \mapsto (\rho_{\idot} \ms{M})^{\Gm}$ is an equivalence $\Good{\cW} \iso \Lmod{\dd_V}$, the latter being the category of coherent right $\dd_V$-modules. 

\begin{lem}\label{lem:invarianthom}
Assume that $\resol = T^* V$. Let $x \in V \subset T^* V$ and $\ms{M}_i \in D^b(\Good{\cW_{\resol,x}})$ for $i = 1,2$. Then $\cW_{\resol,x}^{\Gm} = \dd_{V,x}$ and
\begin{equation}\label{eq:invariantsiso}
 \R \Hom_{\cW_{\resol,x}}(\ms{M}_1, \ms{M}_2)^{\Gm} \cong \R \Hom_{\dd_{V,x}}(\ms{M}_1^{\Gm}, \ms{M}_2^{\Gm}). 
 \end{equation}
 \end{lem}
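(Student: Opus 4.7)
The plan is to reduce the lemma to two observations: the elliptic $\Gm$-action turns $\cW_{\resol,x}$ into a $\bbZ$-graded algebra, and because $\hbar$ is invertible and has weight one, this grading is \emph{strong} in the sense that multiplication by $\hbar^n$ identifies each graded piece $(\cW_{\resol,x})_n$ with $(\cW_{\resol,x})_0$. The identification $\cW_{\resol,x}^{\Gm}=\dd_{V,x}$ follows by localizing the equivalence of \cite[Proposition~4.33]{BDMN} at the $\Gm$-fixed point $x\in V\subset T^*V$; one can also verify it directly in Darboux coordinates, where the degree-zero subalgebra is generated by the base coordinates $x_i$ together with the renormalized derivations $\hbar^{-\ell}\xi_i$, which satisfy canonical commutation relations. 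Granting this, the functor $(-)^{\Gm}$ is simply extraction of the degree-zero summand, hence exact; by standard graded-module theory for a strongly graded algebra, it implements an equivalence between the category of $\Gm$-equivariant (equivalently, graded) $\cW_{\resol,x}$-modules and $\dd_{V,x}$-modules, with quasi-inverse $N\mapsto \cW_{\resol,x}\otimes_{\dd_{V,x}}N$.

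To establish \eqref{eq:invariantsiso}, I would choose a bounded-above resolution $P^{\idot}\to \ms{M}_1$ by finitely generated $\Gm$-equivariant projective $\cW_{\resol,x}$-modules; such a resolution exists by Noetherianity and finite global dimension, and forgetting the grading $P^{\idot}$ remains projective and so computes $\R\Hom_{\cW_{\resol,x}}(\ms{M}_1,\ms{M}_2)$. Applying $(-)^{\Gm}$ yields a resolution of $\ms{M}_1^{\Gm}$ by finitely generated projective $\dd_{V,x}$-modules, computing $\R\Hom_{\dd_{V,x}}(\ms{M}_1^{\Gm},\ms{M}_2^{\Gm})$. The lemma then reduces to the natural isomorphism
\[
\Hom_{\cW_{\resol,x}}(P,\ms{M}_2)^{\Gm}\iso \Hom_{\dd_{V,x}}(P^{\Gm},\ms{M}_2^{\Gm}),
\]
valid for any finitely generated $\Gm$-equivariant $\cW_{\resol,x}$-module $P$: the left-hand side is the set of degree-zero graded maps $P\to \ms{M}_2$, and because $P$ is finitely generated and the grading is strong, such a map is uniquely determined by its restriction to $P^{\Gm}$ (via clearing powers of $\hbar$), giving precisely a $\dd_{V,x}$-linear map $P^{\Gm}\to \ms{M}_2^{\Gm}$.

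The main point requiring care is the translation between the $\Gm$-invariants of $\Hom_{\cW_{\resol,x}}(P,\ms{M}_2)$ (viewed as an abstract vector space carrying an \emph{a priori} non-algebraic $\Gm$-action given by $(t\cdot\phi)(p)=t\cdot\phi(t^{-1}\cdot p)$) and the degree-zero graded homomorphisms $P\to \ms{M}_2$. Once $P$ is finitely generated this presents no real difficulty, because every $\Gm$-invariant $\phi$ must send each chosen homogeneous generator of $P$ to an element of $\ms{M}_2$ of matching degree, and conversely any such assignment extends uniquely to a homogeneous $\cW_{\resol,x}$-linear map. Beyond this graded-algebra bookkeeping the argument is entirely formal given the equivalence of abelian categories established in the first step.
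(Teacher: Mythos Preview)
Your argument is correct and matches the paper's approach: both establish $\cW_{\resol,x}^{\Gm}=\dd_{V,x}$ (the paper by an explicit Moyal--Weyl computation, you by invoking the equivalence of \cite[Proposition~4.33]{BDMN}) and then reduce \eqref{eq:invariantsiso} to the case where $\ms{M}_1$ is a single finitely generated projective, ultimately a free module $\cW_{\resol,x}\{j\}$, where the statement follows from the weight-one invertibility of $\hbar$. One terminological caution: $\cW_{\resol,x}$ is not literally a $\bbZ$-graded ring, since a formal series such as $\sum_{n\ge 0}\hbar^n$ is not $\Gm$-finite, so Dade's theorem does not apply verbatim; your ``strongly graded'' heuristic is really a statement about the category of $\Gm$-equivariant good modules (where elements are $\Gm$-finite and multiplication by $\hbar$ identifies successive weight spaces), and with that reading your argument goes through unchanged.
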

 
 \begin{proof}
 In this case, $\cW_\resol$ can be identified as a sheaf of $\Gm$-equivariant $\C(\!(\hbar)\!)$-modules with $\mc{O}_\resol(\!(\hbar)\!)$, where the product is given by the Moyal-Weyl product on $\mc{O}_\resol(\!(\hbar)\!)$. Any section $s \in \cW_{\resol,x}^{\Gm}$ is defined on some $\Gm$-invariant open neighbourhood of $x$ and shrinking this if necessary, we may assume $s \in \mc{O}_\resol(U \times V^*) \widehat{\o} \C(\!(\hbar)\!)$, where $x \in U \subset V$. It is easily checked that any such invariant section is of the form
 \[
 s = \hbar^{-N} D_{-N}(x,y) + \hbar^{-N+1} D_{-N+1}(x,y) + \cdots + D_0(x,y),
 \]
 where each $D_{-i}(x,y) = \sum_{|\alpha| = i} f_i(x) y^{\alpha}$ is a polynomial homogeneous in the $y$'s of degree $i$ and $f_i(x) \in \mc{O}_V(U)$. Since the map $\dd_{V,x} \to  \cW_{\resol,x}^{\Gm}$ sends $\partial_i$ to $\hbar^{-a_i} y_i$, it follows that this is an isomorphism. 
 
 Since $\ms{M}_1 \in D^b(\Good{\cW_{\resol,x}})$, it is isomorphic to a finite complex of finite rank projective $\cW_{\resol,x}$-modules, where the differentials are $\Gm$-equivariant. Then the identification \eqref{eq:invariantsiso} reduces to the case where $\ms{M}_1 = \cW_{X,x}\{ j \}$ for some $j \in \Z$. 
 \end{proof}
 
We continue to assume that $\resol = T^* V$. Let $n = \dim V$ and let $\Omega_{V,\hbar}^n$ denote the good right $\cW$-module corresponding to $\Omega^n_V$ under the equivalence $\Good{\cW} \iso \Lmod{\dd_V}$. Note that $\Omega_{V,\hbar}^n$ is a simple $\mathsf{T}$-equivariant holonomic object supported on $V$. Therefore, Proposition~\ref{prop:uniquesimple} says that, up to twisting by a linear character of $\mathsf{T}$, $\delta_V := \Omega_{V,\hbar}^n$ is the unique such simple object. 

\begin{prop}\label{prop:localgeneratorcotangent}
	The sheaf $\delta_V$ is a local generator in $D^b(\Good{\cW})$. 
\end{prop}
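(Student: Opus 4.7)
By Lemma~\ref{lem:acyclicpointsRHom}, it suffices to show that if $\ms{M} \in D^b(\Good{\cW})$ satisfies $\R\sHom_{\cW}(\delta_V, \ms{M})^{\Gm}_x = 0$ for every $\Gm$-fixed point $x \in V$, then $\ms{M} \cong 0$. By Lemma~\ref{lem:invarianthom}, these stalks identify with $\R\Hom_{\dd_{V,x}}(\Omega^n_{V,x}, M_x)$, where $M = \ms{M}^{\Gm}$ is the coherent right $\dd_V$-module corresponding to $\ms{M}$ under the equivalence $\Good{\cW} \iso \Lmod{\dd_V}$ (which sends $\delta_V$ to $\Omega^n_V$); the task is therefore to prove that the vanishing of these Ext groups at every $x \in V$ forces $M = 0$.

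Since $\Omega^n_V \cong \dd_V / \sum_i \partial_i \dd_V$ as right $\dd_V$-modules, $\R\Hom_{\dd_V}(\Omega^n_V, M)$ is computed by the Koszul complex $\wedge^\bullet T^* \otimes M$, with $T = \operatorname{span}\{\partial_1, \ldots, \partial_n\}$ and differential induced by right multiplication by the $\partial_i$'s on $M$. Choose a good filtration $F_\bullet M$ bounded below (possible since $V$ is affine); the induced filtration on the Koszul complex has associated graded equal to the classical Koszul complex $\wedge^\bullet T^* \otimes \gr M$ with differential given by multiplication by the symbols $y_i = \sigma(\partial_i) \in \gr \dd_V \cong \mc{O}_{T^*V}$. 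Using the rigid $\mathbb{Z}$-grading coming from the elliptic $\Gm$-action and the positivity of the weights of the $y_i$'s, a spectral-sequence/strictness argument yields that the stalkwise acyclicity of the quantum Koszul complex forces the classical Koszul complex on $\gr M$ to be acyclic at each stalk over $V$; in particular $(\gr M)_x / (y)(\gr M)_x = 0$ at every $x \in V$.

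Graded Nakayama, applied to the $\mathbb{Z}_{\geq 0}$-graded ring $\gr \dd_V$ with the $y_i$'s of strictly positive weight, then forces $(\gr M)_x = 0$ at each $x \in V$, so $\supp(\gr M) \cap V = \emptyset$. But $\gr M$ is a $\Gm$-equivariant coherent sheaf on $T^*V$, so $\supp(\gr M)$ is closed and $\Gm$-invariant; the ellipticity of the $\Gm$-action, which contracts $T^*V$ onto the zero section $V$, ensures that any nonempty such subvariety must meet $V$. Hence $\gr M = 0$, so $M = 0$ and $\ms{M} \cong 0$. The principal technical obstacle is precisely the passage from the quantum Koszul acyclicity on $M$ to the classical Koszul acyclicity on $\gr M$: the filtered-to-graded spectral sequence a priori yields only $E_\infty = 0$, and extracting the needed $E_1 = 0$ requires exploiting the rigid graded structure from the $\Gm$-action and the positive weights of the $y_i$'s to control (and rule out) higher differentials.
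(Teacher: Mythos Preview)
Your reduction to the $\dd_V$-side via Lemmas~\ref{lem:acyclicpointsRHom} and~\ref{lem:invarianthom} matches the paper exactly. The divergence is at the final step: having reduced to the statement that $\R\sHom_{\dd_V}(\Omega^n_V, M)\cong M\otimes^{\L}_{\dd_V}\mc{O}_V$ is acyclic as a complex of sheaves, the paper simply invokes \cite[Lemma~7.2.6(iii)]{BD} (equivalently \cite[Lemma~2.1.6]{BDChiral}) to conclude $M\cong 0$. You instead try to prove this conservativity from scratch via a good filtration and a spectral sequence.

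That direct argument, as written, has a genuine gap. You yourself flag it: the filtered-to-graded spectral sequence for the Koszul complex gives $E_\infty=0$, and you need $E_1=0$, but you only assert that ``the rigid $\mathbb{Z}$-grading coming from the elliptic $\Gm$-action'' lets one ``control (and rule out) higher differentials'' without carrying this out. The difficulty is that after passing through the equivalence $\Good{\cW}\iso\Lmod{\dd_V}$ the $\Gm$-equivariance has been consumed: the $\dd_V$-module $M=\ms{M}^{\Gm}$ carries no residual $\mathbb{Z}$-grading, only the order \emph{filtration}, and that filtration is exactly what produces the spectral sequence in the first place. There is nothing left to force the higher differentials to vanish, and in general $E_\infty=0$ does not imply $E_1=0$. (There is also the minor point that $\ms{M}\in D^b$ is a complex, so ``choose a good filtration on $M$'' needs an induction on amplitude; but this is fixable.)

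The result you are trying to reprove is nontrivial: the argument in \cite{BD,BDChiral} proceeds not via a spectral sequence but by showing that $\dd_V$ itself lies in the triangulated subcategory generated by $\Omega$-induced modules (using the Spencer resolution), which gives conservativity of $(-)\otimes^{\L}_{\dd_V}\mc{O}_V$ directly. Citing that lemma, as the paper does, is the clean way to close the argument.
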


\begin{proof}
Let $\ms{M} \in D^b(\Good{\cW})$. By definition, we must show that if $\R \sHom_{\cW}(\delta_V, \ms{M})$ is acyclic then $\ms{M} \cong 0$. The complex of sheaves is acyclic if and only if the complex of vector spaces $\R \sHom_{\cW}(\delta_V, \ms{M})_x$ is acyclic for all $x \in \resol$. Since $\cW$ is a coherent sheaf of algebras and $\delta_V$ a coherent $\cW$-module, \cite[Proposition~A.3]{Kashiwara} says that 
\begin{equation}\label{eq:RHomOmegaV}
\R \sHom_{\cW}(\delta_{V}, \ms{M})_x = \R \Hom_{\cW_x}(\delta_{V,x}, \ms{M}_x).
\end{equation}
Take $x \in V \subset T^* V = \resol$. Then $\Gm$ acts on the spaces of \eqref{eq:RHomOmegaV}. Taking $\Gm$-invariants of the right hand side gives   
\begin{align*}
 \R \Hom_{\cW_x}(\delta_{V,x}, \ms{M}_x)^{\Gm} & = \R \Hom_{\cW_x}((\Omega_{V,\hbar}^n)_x, \ms{M}_x)^{\Gm} \\
 & =  \R \Hom_{\cW_x^{\Gm}}((\Omega_{V,\hbar}^n)_x^{\Gm}, \ms{M}_x^{\Gm}) \\
 & \cong \R \Hom_{\dd_{V,x}}(\Omega_{V,x}^n, \ms{M}_x^{\Gm}) \\
 & = \R \sHom_{\dd_{V}}(\Omega_{V}^n, \ms{M}^{\Gm})_x
 \end{align*}
  having applied Lemma~\ref{lem:invarianthom}. This implies that 
  \[
\R \Hom_{\dd_{V}}(\Omega_{V}^n, \ms{M}^{\Gm}) \cong \ms{M}^{\Gm} \o_{\dd_V}^{\mathbb{L}} \mc{O}_V
\]
is an acyclic complex of sheaves on $V$. Hence, by \cite[Lemma~7.2.6(iii)]{BD} or \cite[Lemma~2.1.6]{BDChiral}, $\ms{M}^{\Gm} \cong 0$. But taking $\Gm$-invariants is an equivalence, hence $\ms{M} \cong 0$. 
\end{proof}

Now, we prove by induction on the strata that $\mc{L}$ is a local generator. Let $C \subset \resol$ be a closed coisotropic cell and $U$ its complement. Assume that $\ms{M} \in D^b(\Good{\cW})$ such that the complex $\R \sHom_{\cW}(\mc{L},\ms{M})$ is acyclic. Then 
\[
\R \sHom_{\cW}(\mc{L},\ms{M}) |_U = \R \sHom_{\cW_U}(\mc{L} |_U,\ms{M} |_U)
\]
is acyclic and hence we may assume by induction that $\ms{M} |_U \cong 0$. Therefore, $\ms{M}$ is supported on $C$ and Kashiwara's Theorem \cite[Theorem~4.28]{BDMN} says that there exists $\ms{M}'$ on $S$ such that $\ms{M} = \Ham^{\perp}(\ms{M}')$. Then 
\[
\R \sHom_{\cW}(\mc{L},\ms{M}) = \R \sHom_{\cW}(\mc{L},\Ham^{\perp} (\ms{M}') ).
\]
Pick a point $x \in C$. Then $\R \sHom_{\cW}(\mc{L},\Ham^{\perp} (\ms{M}') )_x$ is an acyclic sheaf of vector spaces. 

\begin{lem}\label{lem:stalksofRHom}
Let $y \in Y \subset C$ and $s = \pi(y) \in S$. Then 
\[
\R \sHom_{\cW}(\mc{L},\Ham^{\perp} (\ms{M}') )_y \cong \R \sHom_{\cW_S}(\Ham^*(\mc{L}),\ms{M}')_s.
\]
\end{lem}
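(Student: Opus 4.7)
The plan is to combine the adjunction between $\Ham^\perp$ and $\Ham^*$ established in Proposition~\ref{prop:adjunctionforHamperp} with the observation that both functors are compatible with localization around the relevant stalks. The key geometric input is that the projection $\pi \colon C \to S$ locally trivializes via the identification $C \cong Y \times V$ (from the discussion in Section~\ref{sec:Lag}), with $y \in Y$ mapping to $s$; consequently, neighborhoods of $y$ in $\resol$ whose intersection with $C$ has the form $\pi^{-1}(W)$ for $W$ a neighborhood of $s \in S$ are cofinal in all neighborhoods of $y$.

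First I would work locally, choosing a $\mathsf{T}$-invariant affine open $U \subset \resol$ containing $y$, putting us in the framework of Section~\ref{sec:dualityetc} with algebras $A_\hbar = \Gamma(U, \cW_\resol)$ and $B_\hbar$ the corresponding global sections on the coisotropic reduction $\pi(C \cap U) \subset S$. Since $\mc{L}$ is a bounded complex with coherent (in fact holonomic, by Theorem~\ref{thm:holonomicpushforward}) cohomology, $\R \sHom$ commutes with stalks, yielding
\[
\R \sHom_{\cW_\resol}(\mc{L}, \Ham^\perp(\ms{M}'))_y \cong \R \Hom_{\cW_{\resol,y}}(\mc{L}_y, \Ham^\perp(\ms{M}')_y),
\]
and similarly on the $S$-side. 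The module $\Ham^\perp(\ms{M}')$ is locally described as $A_\hbar/J_\hbar \otimes^{\L}_{B_\hbar} \ms{M}'$, and its stalk at $y$ is therefore computed as $(\cW_\resol/\cJ)_y \otimes^{\L}_{\cW_{S,s}} \ms{M}'_s$ using the cofinality noted above. Applying Proposition~\ref{prop:adjunctionforHamperp} at the level of the local rings $\cW_{\resol,y}$ and $\cW_{S,s}$ then yields the claimed isomorphism.

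The main obstacle will be the careful verification that the adjunction of Proposition~\ref{prop:adjunctionforHamperp}, which is formulated for (pro-)modules over the global algebras $A_\hbar$ and $B_\hbar$, descends correctly to the stalk rings. Concretely, one must check that $\Ham^*$ commutes with the localization from $A_\hbar$-modules to $\cW_{\resol,y}$-modules, and that the analogous statement holds for $\Ham^\perp$ with the projection $\pi$. Both assertions reduce to the flatness of $\cW_{\resol,y}$ over $A_\hbar$ and of $\cW_{S,s}$ over $B_\hbar$, combined with the compatibility of the duality functor $\mathbb{D}$ with these localizations, and the fact that the support of $\Ham^\perp(\ms{M}')$ lies in $C$, so that only data along $\pi$ in a neighborhood of $s$ contributes to the stalk at $y$.
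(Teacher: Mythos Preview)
Your proposal is correct in outline and uses the same ingredients as the paper: the adjunction of Proposition~\ref{prop:adjunctionforHamperp} together with a cofinality argument relating neighborhoods of $y$ in $\resol$ to neighborhoods of $s$ in $S$. The difference is in execution. You propose to pass directly to the stalk rings $\cW_{\resol,y}$ and $\cW_{S,s}$ and then re-establish the adjunction there; this is exactly the ``main obstacle'' you flag, and your proposal does not actually carry it out (it only lists what would have to be checked). The paper sidesteps this entirely: it computes the stalk as a filtered colimit over a cofinal system $\{U_i\}$ of ``$C$-good'' affine $\Gm$-stable neighborhoods of $y$, applies the already-established adjunction at each finite level (between $\cW(U_i)$-modules and $\cW_S(S_i)$-modules, with $S_i = \pi(C\cap U_i)$), and then recognizes the resulting colimit as the stalk at $s$ by showing that the $S_i$ form a cofinal system of affine $\Gm$-stable neighborhoods of $s$. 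This is your cofinality observation, but deployed so that no new adjunction needs to be verified at the level of local rings.
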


\begin{proof}
	Let us say that an open subset $U \subset \resol$ is $C$-good if (a) $U$ is a $\Gm$-stable affine open set and (b) $C \cap U_i = \{ u \in U_i \, | \, \lim_{t \to \infty} t \cdot u \in Y \}$. By \cite[Lemma~4.25]{BDMN}, $\resol$ admits a cover by $C$-good open sets. Thus, we may assume that $\resol$ is itself $C$-good. 
	
	Moreover, we can fix a filtered system $\{ U_i \}_{i \in I}$ of $C$-good neighbourhoods of $y$, meaning that $U_j \subset U_i$ if $i \to j$ and $\bigcap_{i \in I} U_i = \{ y \}$. Indeed, Sumihiro's Theorem implies that we can choose a filtered system $\{ U_i \}_{i \in I}$ of affine $\Gm$-stable open neighbourhoods of $y$. Then it suffices to note that if $C = \{ x \in \resol \, | \, \lim_{t \to \infty} t \cdot x \in Y \}$ then $C \cap U = \{ u \in U \, | \, \lim_{t \to \infty} t \cdot u \in Y \}$ for all $\Gm$-stable open subsets of $\resol$. 
	
	Next, we claim that $\{ S_i := \pi(C \cap U_i) \}_{i \in I}$ is a filtered system of $\Gm$-stable affine open neighbourhoods of $s$ in $S$. It follows from the explicit construction of $\pi$ given in Section 2.4 of \cite{BDMN} that $S_i = \mathrm{Spec} ( (R/I)^{ \{ I , - \} })$, where $R = \mc{O}(U_i)$ and $I$ is the radical ideal in $R$ defining $C$. Thus, each $S_i$ is an affine $\Gm$-stable open open neighbourhood of $s$. If $s \in U \subset S$ then there exists an open set $U' \subset \resol$ containing $y$ such that $U' \cap C = \pi^{-1}(U)$. If $i \in I$ such that $U_i \subset U'$ then $S_i \subset U$. 
	
	Finally, we compute:
	\begin{align*}
\R \sHom_{\cW}(\mc{L},\Ham^{\perp} (\ms{M}') )_y & = \lim_I \R \sHom_{\cW_{X}}(\mc{L},\Ham^{\perp} (\ms{M}'))(U_i) \\
& \cong \lim_I \R \Hom_{\cW_{X}(U_i)}(\mc{L}(U_i),\Ham^{\perp} (\ms{M}')(U_i)) \\
& \cong \lim_I \R \Hom_{\cW_{X}(U_i)}(\mc{L}(U_i),\Ham^{\perp} (\ms{M}'(S_i))) \\
& \cong \lim_I \R \Hom_{\cW_{S}(S_i)}(\Ham^*(\mc{L})(S_i),\ms{M}'(S_i)) \\
& \cong \lim_I \R \sHom_{\cW_{S}}(\Ham^*(\mc{L}),\ms{M}')(S_i) \\
& = \R \sHom_{\cW_S}(\Ham^*(\mc{L}),\ms{M}')_s
\end{align*}	 
where we have applied the adjunction of Proposition~\ref{prop:adjunctionforHamperp} in the fourth line.

%We assume that $X$ is affine. Let $L= \Gamma(X,\mc{L})$ and $M = \Gamma(S,\ms{M}')$. We have 
%\begin{align*}
%\R Hom_{\cW}(\mc{L},\Ham^{\perp} (\ms{M}') )_x & = \R \Hom_{\cW_{X,x}}(\cW_{X,x} \o_{A_{\hbar}} L ,\cW_{X,x} \o_{A_{\hbar}} \Ham^{\perp} (M)) \\
%& \cong  \R \Hom_{A_{\hbar}}(L,\cW_{X,x} \o_{A_{\hbar}} \Ham^{\perp} (M)) \\
%& \cong  \R \Hom_{A_{\hbar}}(L, \Ham^{\perp} (\cW_{S,s} \o_{B_{\hbar}} M)) \\
%& \cong \R \Hom_{B_{\hbar}}(\Ham^*(L),\cW_{S,s} \o_{B_{\hbar}} M)\\
%& \cong \R \Hom_{\cW_{S,s}}(\cW_{S,s} \o_{B_{\hbar}} \Ham^*(L),\cW_{S,s} \o_{B_{\hbar}} M)\\
%&   \cong \R \Hom_{\cW_{S,s}}(\Ham^*(\mc{L})_s, \ms{M}_s')\\
%&   \cong \R \Hom_{\cW_{S}}(\Ham^*(\mc{L}), \ms{M}')_s
%\end{align*}
%where we have used Lemma~\ref{lem:localHamindcomm} and ?? saying that $\Ham^*(\mc{L})$ is in $D^b(\cW_S-good)$. 
\end{proof}

\begin{prop}\label{prop:RHomacycliccoistred}
	If $\R \sHom_{\cW}(\mc{L},\Ham^{\perp} (\ms{M}') )$ is acyclic then so too is $\R \sHom_{\cW_S}(\Ham^*(\mc{L}),\ms{M}')$.
\end{prop}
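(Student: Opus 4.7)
The plan is to reduce the claim to Lemma~\ref{lem:acyclicpointsRHom} applied on $S$, using the stalk-level isomorphism of Lemma~\ref{lem:stalksofRHom} as the bridge back to $\resol$.

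First I would verify that $\Ham^*(\mc{L})$ and $\ms{M}'$ both lie in $D^b(\Good{\cW_S})$, so that Lemma~\ref{lem:acyclicpointsRHom} is available on $S$. For $\ms{M}'$ this is immediate because $\Ham^{\perp}$ is an equivalence between $D^b(\Good{\cW_S})$ and the full subcategory of $D^b(\Good{\cW_{\resol}})$ consisting of objects supported on $C$. For $\Ham^*(\mc{L})$ one uses that $\mc{L}$ lies in $D^b_{\hol}(\cW_{\resol})$ (by construction, via Theorem~\ref{thm:holonomicpushforward}) together with the identification $\Ham^* = \mathbb{D}\circ\Ham\circ\mathbb{D}$ from Proposition~\ref{prop:adjunctionforHamperp}; since duality preserves boundedness and holonomicity, and $\Ham^\perp$ is an exact equivalence on the subcategory of holonomic objects supported on $C$, the same holds for $\Ham^*$ when applied to such objects. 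Granted this, Lemma~\ref{lem:acyclicpointsRHom} reduces the task to showing that the $\Gm$-invariant stalks of $\R \sHom_{\cW_S}(\Ham^*(\mc{L}),\ms{M}')$ at the elliptic fixed points of $S$ are acyclic.

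Next I would identify those fixed points. By the description of $\pi\colon C\to S$ recalled in Section~\ref{sec:elliptic}, $\pi$ factors through the contraction $C \to Y\cap C$, so every elliptic fixed point of $S$ is of the form $s=\pi(y)$ for some $y\in Y\cap C$ (in the bionic situation, a single point). For each such pair Lemma~\ref{lem:stalksofRHom} supplies
\[
\R \sHom_{\cW_S}(\Ham^*(\mc{L}),\ms{M}')_s \;\cong\; \R \sHom_{\cW}(\mc{L},\Ham^{\perp}(\ms{M}'))_y.
\]
The right-hand side is a stalk of a sheaf that is acyclic by hypothesis, hence is itself acyclic; and since taking $\Gm$-invariants is exact on weight-graded complexes of $\C(\!(\hbar)\!)$-modules (the weight decomposition is a direct sum and the differential preserves it), acyclicity passes to the invariants. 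This provides the required input to Lemma~\ref{lem:acyclicpointsRHom} on $S$ and finishes the argument.

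The main obstacle I anticipate is the preliminary verification that $\Ham^*(\mc{L})$ genuinely lives in $D^b(\Good{\cW_S})$, rather than merely in the ind-pro category in which it was originally defined in Proposition~\ref{prop:adjunctionforHamperp}; this requires tracking preservation of holonomicity and boundedness through the duality functors involved. Once this is in hand, and the fixed-point identification for $\pi$ is clear, the conclusion is simply the combination of the two preceding lemmas with the hypothesis.
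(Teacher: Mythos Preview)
Your proposal is correct and follows essentially the same route as the paper: reduce via Lemma~\ref{lem:acyclicpointsRHom} (applied on $S$, using that the $\Gm$-action there is elliptic) to checking acyclicity of stalks at fixed points of $S$, then invoke Lemma~\ref{lem:stalksofRHom} to transport those stalks back to fixed points $y\in Y\cap C$ on $\resol$, where the hypothesis applies. The paper's proof is terser and omits the preliminary check that $\Ham^*(\mc{L})\in D^b(\Good{\cW_S})$ which you rightly flag; your justification via Theorem~\ref{thm:holonomicpushforward} and the duality description of $\Ham^*$ is the correct one. (Incidentally, the paper's proof cites Lemma~\ref{lem:localgeneratorderivedex} where Lemma~\ref{lem:acyclicpointsRHom} is clearly intended; you have the right reference.)
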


\begin{proof}
Note that the action of $\Gm$ on $S$ is elliptic. Therefore, Lemma~\ref{lem:localgeneratorderivedex} says that the complex $\R \sHom_{\cW_S}(\Ham^*(\mc{L}),\ms{M}')$ is acyclic if and only if $\R \sHom_{\cW_S}(\Ham^*(\mc{L}),\ms{M}')_s$ is an acyclic complex of vector spaces for all $s \in S^{\Gm}$. Fix such a point $s$ and let $y \in Y \subset C$ such that $\pi(y) = s$. Then Lemma~\ref{lem:stalksofRHom} says that 
 \[
 \R \sHom_{\cW}(\mc{L},\Ham^{\perp} (\ms{M}') )_y \cong \R \sHom_{\cW_S}(\Ham^*(\mc{L}),\ms{M}')_s.
 \]
 Since we have assumed that $\R \sHom_{\cW}(\mc{L},\Ham^{\perp} (\ms{M}') )_y$ is acyclic, the proposition follows. 
\end{proof}

%\begin{lem}\label{lem:localHamindcomm}
%Assume that $X$ is affine. Let $M \in D^b(B_{\hbar}-good)$, $x \in C$ and $s = \pi(x)$. Then
%\[
%\cW_{X,x} \o_{A_{\hbar}} \Ham^{\perp} (M) \cong \Ham^{\perp} (\cW_{S,s} \o_{B_{\hbar}} M).
%\]
%\end{lem}

%\begin{proof}
%Recall that $ \Ham^{\perp} (M) = A_{\hbar} / J \o_{B_{\hbar}} M$. We claim that:
%\[
%\cW_{X,x} \o_{A_{\hbar}} A_{\hbar} / J  \cong A_{\hbar} / J \o_{B_{\hbar}} \cW_{S,s}. 
%\]

%?????

%Then 
%\begin{align*}
%\cW_{X,x} \o_{A_{\hbar}} \Ham^{\perp} (M) & = \cW_{X,x} \o_{A_{\hbar}} A_{\hbar} / J  \o_{B_{\hbar}} M \\
%& \cong A_{\hbar} / J \o_{B_{\hbar}} \cW_{S,s} \o_{B_{\hbar}} M = \Ham^{\perp} (\cW_{S,s} \o_{B_{\hbar}} M).
%\end{align*}
%\end{proof}

We deduce that $\R \sHom_{\cW_S}(\Ham^*(\mc{L}),\ms{M}')$ is an acyclic complex of sheaves. 

\begin{lem}\label{lem:pullbacklocalgenerator}
    $\Ham^*(\mc{L})$ is a local generator on $S$. 
\end{lem}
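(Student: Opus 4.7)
The plan is to exhibit the canonical local generator of $D^b(\Good{\cW_S})$ as a direct summand of $\Ham^*(\mc L)$, and then invoke Proposition~\ref{prop:localgeneratorcotangent}. Since $\resol$ is bionic and the coisotropic cell $C$ contains a single $\mathsf T$-fixed point, its coisotropic reduction $S$ is a symplectic $\mathsf T$-representation with only the origin as its $\Gm$-fixed point; in particular $S$ has a single coisotropic cell (itself), and a $\mathsf T$-equivariant identification $S \cong T^* V'$ makes the Hamiltonian-attracting subspace $V' \subset S$ the canonical Lagrangian skeleton of $S$. By Proposition~\ref{prop:localgeneratorcotangent}, the unique (up to Hamiltonian character twist) simple $\mathsf T$-equivariant holonomic $\cW_S$-module $\delta_{V'}$ supported on $V'$ is a local generator on $S$.

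Next I will compute $\Ham^*(\R (j_r)_* \delta_{\Lag_r})$ for the unique index $r$ with $C_r = C$. Write the closed immersion $i \colon C \hookrightarrow \resol$ as $i = j_r \circ i'$ with $i' \colon C \hookrightarrow U_r$ (which makes sense because $C$ is closed in $U_r$ by the chosen ordering on $I$). Since $\Ham^*$ is the left adjoint to $\Hamp \cong i_*$, composition of left adjoints gives $\Ham^* \cong (i')^* \circ j_r^*$; the open-embedding identity $j_r^* \R (j_r)_* \cong \Id$ then reduces the computation to $(i')^* \delta_{\Lag_r}$. Because $\delta_{\Lag_r}$ is set-theoretically supported on $\Lag_r \subset C \subset U_r$, the recollement triangle on $U_r$ shows $(i')_* (i')^* \delta_{\Lag_r} \cong \delta_{\Lag_r}$, so $(i')^* \delta_{\Lag_r}$ is the unique preimage of $\delta_{\Lag_r}$ under the Kashiwara-type equivalence $(i')_* \cong \Hamp$, namely $\Ham(\delta_{\Lag_r})$. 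This is a simple $\mathsf T$-equivariant holonomic $\cW_S$-module with support $\pi_r(\Lag_r)$; by the results of Section~\ref{sec:Lag}, $\pi_r(\Lag_r)$ is a $\mathsf T$-stable Lagrangian subspace of $S$, and a dimension count together with the containment of $\pi_r(\Lag_r)$ in the Hamiltonian-attracting set forces $\pi_r(\Lag_r) = V'$. Proposition~\ref{prop:uniquesimple} applied on $S$ then identifies $\Ham(\delta_{\Lag_r})$ with $\delta_{V'}$ up to a Hamiltonian character twist.

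Since $\Ham^*$ is a left adjoint it preserves direct sums, so $\delta_{V'}$ is a direct summand of $\Ham^*(\mc L) = \bigoplus_r \Ham^*(\R (j_r)_* \delta_{\Lag_r})$. The local-generator property passes to any complex containing a local generator as a summand: if $\R \sHom_{\cW_S}(\Ham^*(\mc L), \ms{M}') \cong 0$ then its direct summand $\R \sHom_{\cW_S}(\delta_{V'}, \ms{M}') \cong 0$, forcing $\ms{M}' \cong 0$. The main subtle step is the adjunction-based identification $\Ham^* \R (j_r)_* \delta_{\Lag_r} \cong \Ham(\delta_{\Lag_r})$; once the factorization $i = j_r \circ i'$ is in hand and one checks that $(i')^*$ agrees with $\Ham$ on modules supported on $C$, the remainder of the argument reduces to the base case Proposition~\ref{prop:localgeneratorcotangent} and the uniqueness in Proposition~\ref{prop:uniquesimple}.
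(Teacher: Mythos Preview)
Your proof is correct and follows essentially the same route as the paper: isolate the direct summand of $\mc{L}$ corresponding to the closed cell, identify its image under $\Ham^*$ with the canonical simple $\delta_{V'}$ on $S$, and invoke Proposition~\ref{prop:localgeneratorcotangent}. Two minor remarks: first, for the closed cell $C=C_r$ one has $U_r=\resol$ and $j_r=\Id$ (this is how the paper writes the summand simply as $\delta_{\Lambda_N}$), so your factorization $i=j_r\circ i'$ is in fact trivial and the adjunction argument reduces to $\Ham^*(\delta_{\Lambda_r})\cong\Ham(\delta_{\Lambda_r})$; second, your formulation---asserting only that $\delta_{V'}$ is a direct \emph{summand} of $\Ham^*(\mc{L})$---is actually more careful than the paper's line ``hence $\Ham^*\mc{L}=\delta_{\Lambda_N}$,'' since $\Ham^*\circ\R j_*$ need not vanish, but the summand statement is all that is needed for the local-generator conclusion.
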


\begin{proof}
Let $N = |I|$. If, for $r < N$, $j_r' \colon U_r \hookrightarrow U$ then $j_r = j \circ j_r'$. Then 
\[
\mc{L} = \delta_{\mathsf{\Lambda}_{|I|}} \oplus \R j_* \left( \bigoplus_{r < N} \R (j_r')_* \delta_{\mathsf{\Lambda}_{r}} \right).
\]
Since $\R i^! \circ \R j_* = 0$, we deduce that $\R i^! (\mc{L}) = \delta_{\mathsf{\Lambda}_{N}}$ and hence $\Ham^* \mc{L}= \delta_{\mathsf{\Lambda}_N}$ on $S = S_N$. Then the claim follows from Proposition~\ref{prop:localgeneratorcotangent} applied to $S \cong T^* V_N$. 
\end{proof}

We deduce from Proposition~\ref{prop:RHomacycliccoistred} and Lemma~\ref{lem:pullbacklocalgenerator} that $\ms{M}' = 0$. This completes the proof of Theorem~\ref{thm:localgeneratorv3}. 

\begin{remark}
    Since $\mc{L} \in D^b_{\hol}(\Good{\cW})$, equation~\eqref{eq:cohomologyOmega} implies that $\Omega$ is a proper dg algebra. 
\end{remark}

\subsection{The coderived category}

This subsection is an aside, and not used elsewhere in the paper. 

Implicitly, the proof of Proposition~\ref{prop:localgeneratorcotangent} is based on classical $\dd$-$\Omega$-duality, which is an equivalence $D^b(\Lmod{\dd_V}) \iso D_{ex}(\Lmod{\Omega_{\DR}})$, originally considered by Kapranov \cite{Kap} and later Saito \cite{Saito} and Beilinson-Drinfeld \cite{BD,BDChiral}. Here $\Omega_{\DR}$ is the (sheaf) de Rham dg-algebra of poly-differential forms on $V$. More recently, Positselski \cite[Appendix B]{Po} has given a beautiful reformulation (and generalization) of this result in terms of coderived categories. Namely, one first considers the smallest thick triangulated subcategory $\text{Acyc}^\text{co}(\Omega_{\DR})$ of $K(\LMod{\Omega_{\DR}})$ that is cocomplete and contains the total complex of every exact triple in $C(\Omega_{\DR})$. Then Positselski defines the coderived category $D^{\co}(\Omega_{\DR})$ as the Verdier quotient of $K(\LMod{\Omega_{\DR}})$ by $\text{Acyc}^\text{co}(\Omega_{\DR})$. He proves that the $(\Omega_{\DR},\dd_V)$-bimodule $\mc{T} := \Omega_{\DR} \o_{\mc{O}} \dd_V$ (equipped with the de Rham differential) induces an equivalence 
\[
    G_{\DR} \colon D(\LMod{\dd_V}) \iso D^{\co}(\Omega_{\DR}), \quad G_{\DR}(\ms{M}) = \underline{\sHom}_{\dd_V}(\mc{T},\ms{M})
\]
with quasi-inverse $F_{\DR}(\ms{N}) = \ms{N} \o_{\Omega_{\DR}} \mc{T}$. 

If we define $\Omega = \underline{\sEnd}_{\dd_V}(\mc{T})$ then there is a quasi-isomorphism $\Omega_{\DR} \iso \Omega$ of sheaves of dg-algebras on $V$. If we define the thick subcategories $\mc{N}_F \subset K(\LMod{\Omega})$ and $\mc{N}_{F_{\DR}} \subset K(\LMod{\Omega_{\DR}})$ as in Section~\ref{sec:Omegamodules}, then Positselski's result implies that $\mc{N}_{F_{\DR}} = \text{Acyc}^\text{co}(\Omega_{\DR})$. Moreover, one can show that:

\begin{prop}\label{prop:Poisequivfactor}
    Positselski's equivalences $G_{\DR},F_{\DR}$ can be factored as 
\[
\begin{tikzcd}
D(\dd_V) \ar[r,bend right,"G"'] & K(\LMod{\Omega}) / \mc{N}_{\Omega} \ar[r,bend right,"\Res"'] \ar[l,bend right,"F"'] & D^{\co}(\Omega_{\DR}) \ar[l,bend right,"\Ind"'],  
\end{tikzcd}
\]
where all functors are equivalences. 
\end{prop}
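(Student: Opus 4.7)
The plan is to bootstrap Positselski's equivalence using the quasi-isomorphism $\varphi \colon \Omega_{\DR} \iso \Omega$ together with the (non-equivariant analogue of the) equivalence from Lemma~\ref{lem:homotopyequiv}, and then to verify that $K(\LMod{\Omega})/\mc{N}_{\Omega}$ is naturally identified with Positselski's coderived category $D^{\co}(\Omega)$ so that the desired factorization is essentially tautological.

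First I would introduce the pair $(\Ind,\Res)$ along $\varphi$: $\Res$ is restriction of scalars and $\Ind$ is extension of scalars, computed on semi-free resolutions. Since $\varphi$ is a quasi-isomorphism of sheaves of dg algebras, a standard argument along the lines of Positselski's treatment of weak equivalences of dg algebras shows that $(\Ind,\Res)$ descends to a mutually inverse pair of equivalences $D^{\co}(\Omega_{\DR}) \iso D^{\co}(\Omega)$. Next I would establish that $D^{\co}(\Omega) = K(\LMod{\Omega})/\mc{N}_{\Omega}$, i.e.\ that $\mc{N}_{\Omega} = \mathrm{Acyc}^{\co}(\Omega)$. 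The containment $\mathrm{Acyc}^{\co}(\Omega) \subseteq \mc{N}_{\Omega}$ is direct: $F(\ms{N}) = \ms{N}\o_{\Omega}\mc{T}$ sends the totalization of an exact triple of $\Omega$-modules to the totalization of an exact triple of $\dd_V$-modules (using that $\mc{T}$ is locally flat over $\Omega$, cf.\ Lemma~\ref{lem:EoverOmega}(4)), and $\mc{N}_{\Omega}$ is closed under cones and coproducts. For the reverse inclusion, transport Positselski's known identification $\mathrm{Acyc}^{\co}(\Omega_{\DR}) = \mc{N}_{F_{\DR}}$ through the equivalence $D^{\co}(\Omega_{\DR})\iso D^{\co}(\Omega)$, observing that $F$ and $F_{\DR}$ are intertwined by $\Res$ since $\Res$ sends the $\Omega$-module $\mc{T}$ to its natural $\Omega_{\DR}$-module structure.

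With these ingredients in place, the equivalence $G \colon D(\dd_V) \iso K(\LMod{\Omega})/\mc{N}_{\Omega}$ follows by the same argument as in Lemmas~\ref{lem:homotopyequiv} and~\ref{lem:localgeneratorderivedex}, applied to $\mc{E} = \mc{T}$ viewed as a resolution of a local generator for $D(\dd_V)$ (any generator suffices on the affine $V$). Commutativity of the diagram then reduces to the natural isomorphisms $\Res\circ G \simeq G_{\DR}$ and $F_{\DR}\simeq F\circ\Ind$, which follow from adjunction and the explicit formulas $G_{\DR}(\ms{M}) = \underline{\sHom}_{\dd_V}(\mc{T},\ms{M})$ and $F_{\DR}(\ms{N}) = \ms{N}\o_{\Omega_{\DR}}\mc{T}$.

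The main obstacle I anticipate is the identification $\mc{N}_{\Omega} = \mathrm{Acyc}^{\co}(\Omega)$: the two classes are defined by genuinely different conditions (internal $F$-acyclicity on one side, closure under totalizations and transfinite coproducts on the other), and matching them requires care at the sheaf-theoretic level, especially regarding the $K$-flatness behaviour of $\mc{T}$. The argument via transport through the quasi-isomorphism $\varphi$ circumvents a direct verification, but one must check that the coderived equivalence induced by $\varphi$ really does interchange the two distinguished subcategories; this is the technical heart of the proof.
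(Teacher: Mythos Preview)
The paper explicitly omits the proof of this proposition, so there is no argument to compare against directly. However, the discussion immediately \emph{following} the proposition in the paper decisively refutes the central steps of your approach.

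Your strategy hinges on two claims: (i) that the quasi-isomorphism $\varphi \colon \Omega_{\DR} \to \Omega$ induces an equivalence $D^{\co}(\Omega_{\DR}) \iso D^{\co}(\Omega)$, and (ii) that $K(\LMod{\Omega})/\mc{N}_{\Omega} = D^{\co}(\Omega)$, i.e.\ $\mc{N}_{\Omega} = \mathrm{Acyc}^{\co}(\Omega)$. The paper states that both are false. For (ii): since $\mc{T}$ is a bounded complex of free $\dd_V$-modules, $\Omega$ has finite global dimension, so by \cite[Theorem~3.6(a)]{Po} one has $D^{\co}(\LMod{\Omega}) = D(\LMod{\Omega})$. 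If (ii) held, the proposition would then force $D^{\co}(\Omega_{\DR}) \simeq D(\LMod{\Omega}) \simeq D(\Omega_{\DR})$, contradicting the well-known fact (cf.\ \cite[Section~7]{BD}) that the coderived and ordinary derived categories of $\Omega_{\DR}$ differ. For (i): the paper singles out precisely this phenomenon, remarking that ``quasi-isomorphic dg-algebras need not have equivalent coderived categories.'' Your transport argument for the reverse inclusion in (ii) relies on (i), so the whole chain collapses.

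You correctly flagged the identification $\mc{N}_{\Omega} = \mathrm{Acyc}^{\co}(\Omega)$ as the main obstacle; the point is that it is not merely delicate but actually false. A viable approach must construct the equivalence $\Res \colon K(\LMod{\Omega})/\mc{N}_{\Omega} \to D^{\co}(\Omega_{\DR})$ directly---showing that restriction along $\varphi$ sends $\mc{N}_{\Omega}$ into $\mathrm{Acyc}^{\co}(\Omega_{\DR})$ and that the induced functor is an equivalence---without passing through $D^{\co}(\Omega)$ as an intermediary.
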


\noindent We omit the proof since the result is not needed elsewhere. 

Based on Proposition~\ref{prop:Poisequivfactor} one would expect that $K(\LMod{\Omega}) / \mc{N}_{\Omega} = D^{\co}(\LMod{\Omega})$. However, just as in Lemma~\ref{lem:EoverOmega}, one can check that the sheaf $\Omega$ has finite global dimension since $\mc{T}$ is a finite free complex of $\dd_V$-modules. Therefore, it follows from \cite[Theorem~3.6(a)]{Po} that $D^{\co}(\LMod{\Omega}) = D(\LMod{\Omega})$ is the usual derived category. Since $\Omega$ and $\Omega_{\DR}$ are quasi-isomorphic, we would then conclude from Proposition~\ref{prop:Poisequivfactor} that $D^{\co}(\Omega_{\DR}) = D(\Omega_{\DR})$ too. But this cannot be the case, as explained for instance in \cite[Section~7]{BD}. Thus, $K(\LMod{\Omega}) / \mc{N}_{\Omega}$ is not the coderived category. The key point is that quasi-isomorphic dg-algebras need not have equivalent coderived categories.

Globally, if we no longer assume $\resol = T^* V$ then one might similarly assume that the exotic derived category $D_{ex}(\LMod{\Omega})$, appearing in the equivalences of \eqref{eq:DOmegaequi}, is just the coderived category. But, again, Lemma~\ref{lem:EoverOmega} implies that the coderived category of $\Omega$ equals the usual derived category $D(\LMod{\Omega})$ and hence is a proper quotient of $D_{ex}(\LMod{\Omega})$.

    %Any exact triple in $DG^0(\mathcal A)$ has an associated total complex, and we denote the smallest thick pretriangulated subcategory of the dg category of $DG(\mathcal A)$ which is cocomplete and contains the total complexes of exact triples $\text{Acyc}^\text{co}(\mathcal A)$. The Drinfeld quotient of $DG(\mathcal A)$ by this subcategory is denoted $D^{\co}(\mathcal A)$, the \textit{coacyclic} dg category of $\mathcal A$-modules. Similarly, if $\text{Acyc}(\mathcal A)$ is the usual pretriangulated subcatgeory of acyclic complexes then $D(\mc{A})$ is the quotient of $DG(\mathcal A)$ by $\text{Acyc}(\mathcal A)$; that is, $D(\mc{A})$ is the derived dg-category and $H^0(D(\mc{A})$ is the usual derived category of $\mc{A}$-modules. 

\subsection{Symplectic resolutions}

In this section we assume once again that $\mu \colon \resol \to X$ is a symplectic resolution. We first note that locally free resolutions exist. Indeed, by \cite[Corollary~B.1]{BLPWAst} we may assume, up to twisting by a quantized line bundle, that localization holds for our quantization. In this case, every module over the ring of global sections $\Gamma(\resol,\cW)^{\Gm}$ admits a finite resolution by finitely generated projective modules since the ring has finite global dimension. These projective modules localize to finite rank locally free $\cW$-modules. 

By Proposition~\ref{simple extension prop}, for each $1 \le r \le N = |I|$, there exists a simple holonomic $\mathsf{T}$-equivariant (good) $\cW$-module $\mc{L}_r$ that is supported on $\Lag$ and $\mc{L}_r |_{U(r)} \cong \delta_{\Lag_r}$. We take now
\[
\mc{L} := \bigoplus_{r = 1}^N \mc{L}_r. 
\]
If $\mc{E}$ is a locally free resolution of $\mc{L}$ then $\Omega = \underline{\sEnd}_{\cW}(\mc{E})^{\Gm}$ is cohomologically supported on $\Lag$. 

%\begin{lemma}
%Fix $r \in I$. Let $\ms{M} \in D^b(\Good{\cW_{U(r)}})$ such that $H^i(\ms{M})$ is a $\mathsf{T}$-equivariant module supported on $\Lag_r$. Then $\ms{M}$ is quasi-isomorphic to its cohomology.  
%\end{lemma}

%\begin{proof}
 %   By Proposition~\ref{prop:uniquesimple}, the simple $\mathsf{T}$-equivariant modules supported on $\Lag_r$ are the twists of $\delta_{\Lag_r}$. Moreover, $\Ext^{\idot}_{\cW}(\delta_{\Lag_r},\delta_{\Lag_r}) = \C$. Therefore, by dg-Morita theory, the ?? triangulated subcategory generated by $\delta_{\Lag_r}$ is equivalent to $D^b(\C)$. If $\ms{M}$ is as in the statement of the lemma then $\ms{M}$ belongs to this subcategory and the claim follows. 
%\end{proof}

\begin{proof}[Proof of Theorem~\ref{thm:symprescohsupport}]
%Finally, we note that if $\mu \colon \resol \to X$ is a symplectic resolution then Proposition~\ref{simple extension prop} says that each $\delta_{\Lag_r}$ admits a holonomic extension supported on $\Lag$.... and set a holonomic $\cW$-module supported on $\Lag$. 

We just need to show that $\mc{L}$ is a local generator for $D^b(\Good{\cW})$. Assume $\ms{M} \in D^b(\Good{\cW})$ with $\R \sHom_{\cW}(\mc{L},\ms{M})$ acyclic. If $U$ is the open coisotropic cell then $\mc{L} |_U = \mc{L}_1 |_U = \delta_{\Lag_1}$ is a local generator by Proposition~\ref{prop:localgeneratorcotangent}. As in the proof of Theorem~\ref{thm:localgeneratorv3}, we may therefore assume by induction that there is a closed coisotropic cell $C = C_N$ with complement $U$ such that $\ms{M} |_U \cong 0$. Again, $\ms{M} = \Ham^{\perp}(\ms{M}')$ for some good $\cW_S$-module $\ms{M}'$. By Proposition~\ref{prop:RHomacycliccoistred}, we are reduced to showing that $\Ham^*(\mc{L})$ is a local generator on $S$. Note that 
\[
\R \sHom_{\cW_S}(\Ham^*(\mc{L}),\ms{M}') = \bigoplus_{r = 1}^N \R \sHom_{\cW_S}(\Ham^*(\mc{L}_r),\ms{M}')
\]
and if the left hand side is acyclic then every summand $\R \sHom_{\cW_S}(\Ham^*(\mc{L}_r),\ms{M}')$ of the right hand side must be acyclic. In particular, $\R \sHom_{\cW_S}(\Ham^*(\mc{L}_N),\ms{M}')$ is acyclic. But since $C_N$ is closed in $\resol$, $\mc{L}_N = \Ham^{\perp}(\delta_{V_N})$ if we identify $S = S_N = T^* V_N$. Then $\Ham^*(\mc{L}_N) = \Ham^*(\Ham^{\perp}(\delta_{V_N})) \cong \delta_{V_N}$ is a local generator by Proposition~\ref{prop:localgeneratorcotangent}. This forces $\ms{M}' \cong 0$.     
\end{proof}

\bibliographystyle{alpha}

\end{document}